\documentclass[11pt]{amsart}
\usepackage{mathrsfs}
\usepackage{amsfonts}
\usepackage{latexsym,amsmath,amssymb}
\usepackage[colorlinks=true,linkcolor=blue]{hyperref}
\usepackage{indentfirst}
\usepackage{cases}
\usepackage{amssymb}
\usepackage[numbers,sort&compress]{natbib}

 \renewcommand{\epsilon}{\varepsilon}

\newtheorem{theorem}{Theorem}[section]

 \newtheorem{lemma}[theorem]{Lemma}
 \newtheorem{remark}[theorem]{Remark}

 \newtheorem{Corollary}[theorem]{Corollary}
 \newtheorem{proposition}[theorem]{proposition}
 
\newtheorem{deff}[theorem]{Definition}
 
 \newcommand{\bth}{\begin{theorem}}
 \newcommand{\ble}{\begin{lemma}}
 \newcommand{\bcor}{\begin{corr}}
 \newcommand{\bdeff}{\begin{deff}}
 \newcommand{\bprop}{\begin{proposition}}
 \newcommand{\ele}{\end{lemma}}
 \newcommand{\ecor}{\end{corr}}
 \newcommand{\edeff}{\end{deff}}
 
 \newcommand{\eprop}{\end{proposition}}

 \renewcommand{\Pi}{\varPi}

 \renewcommand{\epsilon}{\varepsilon}

  \newcommand{\R}{{\mathbb R}}

\numberwithin{equation}{section}

\title
[Nonlinear second boundary conditions ]{The mean curvature type hypersurfaces
with prescribed gradient image}

 \author{Jiguang Bao}
 
\address{School of Mathematical Sciences, Beijing Normal University,
	Beijing, 100875, People's Republic of China,
	E-mail: jgbao@bnu.edu.cn,
	ORCID: 0000-0001-5545-1560}

\author{Rongli Huang   }

\address{School of Mathematics and Statistics, Guangxi Normal University,
Guilin, Guangxi 541004, People's Republic of China,
 E-mail: ronglihuangmath@gxnu.edu.cn }

 \author{Qinfeng Jiang*}
\address{School of Mathematical Sciences, Beijing Normal University,
	Beijing, 100875, People's Republic of China,
	E-mail: 202321130046@mail.bnu.edu.cn,
	ORCID: 0009-0003-0879-073X}
 \thanks{*Corresponding author.}
 
 \subjclass[2020]{Primary 35J25; Secondary 53A10}
 \keywords{mean curvature type equation, second boundary value problem,  Gauss map, Klein ball}

\begin{document}
\begin{abstract}
In this paper, we consider the existence of mean curvature type hypersurfaces
 with prescribed gradient image.  Let $\Omega$ and $\tilde{\Omega}$ be uniformly convex
 bounded domains in $\mathbb{R}^n$ with smooth boundary. We show that there exists unique convex solutions  for
 the second boundary value problem of mean curvature type equations.
\end{abstract}
\maketitle

\section{Introduction}
 The study of maximal and constant mean curvature spacelike hypersurfaces in Lorentzian spacetimes has attracted significant attention due to their importance in both physics and mathematics \cite{Bart1984,Cho1979,Mard1980}. In recent years, research on curvature equations arising from geometric problems, particularly for spacelike hypersurfaces in Minkowski space, has gained momentum \cite{TAE,Choi,LLJ,Bayard}. These equations play a central role in mathematical physics, including classical relativity and the positive mass conjecture, as demonstrated in the foundational work of Yau and Schoen \cite{yau-sch 1979}.

 A fundamental contribution to this field was made by Bartnik and Simon \cite{Bart1982}. Specifically, they considered the following Dirichlet boundary value problem:
 \begin{equation}\label{e1}
 	\left\{
 	\begin{array}{rlll}
 		\mathrm{div}\left(\dfrac{Du}{\sqrt{1-|Du|^2}}\right) &= H(x, u),   &\mathrm{in} & \Omega, \\
 		u &= \varphi,  &\mathrm{on} & \partial \Omega,
 	\end{array}
 	\right.
 \end{equation}
 where \( H: \Omega \times \mathbb{R} \to \mathbb{R} \) and \( \varphi \) are given bounded functions, and \( \Omega \subset \mathbb{R}^{n} \) is a bounded domain. They established necessary and sufficient conditions for the existence of smooth, strictly spacelike solutions. Bayard \cite{Bay2003} also solved  the Dirichlet problem for  prescribed scalar curvature equation in ambiant dimension 4. Bereanu, Jebelean, and Torres \cite{Bere2013} investigated a class of Dirichlet problems of the form
 \begin{equation}\label{e2}
 	\left\{
 	\begin{array}{rlll}
 		\mathrm{div}\left(\dfrac{Du}{\sqrt{1-|Du|^2}}\right) + \lambda \left[\mu(|x|) u^q\right] &= 0, & \mathrm{in} & B_R(0), \\
 		u &= 0, & \mathrm{on} & \partial B_R(0),
 	\end{array}
 	\right.
 \end{equation}
 where \(\lambda > 0\) is a parameter, \(q > 1\), \(R > 0\), and \(\mu: [0, \infty) \to \mathbb{R}\) is a continuous and strictly positive function. They established the existence of a critical value \(\Lambda > 0\) such that the problem admits at least one positive solution when \(\lambda > \Lambda\), while no radial positive solution exists for \(\lambda \in (0, \Lambda)\). There are also numerous significant results concerning the uniqueness and classification of constant mean curvature (CMC) hypersurfaces. For umbilical CMC spacelike hypersurfaces equipped with a closed conformal timelike vector field \(X\), Montiel \cite{Mon 1999} established several uniqueness theorems. Building on these results, Aquino and Lima demonstrated in \cite{Aquino1} that any complete spacelike hypersurface immersed with CMC or bounded mean curvature must be totally umbilical. Furthermore, Bonsante, Seppi, and Smillie \cite{Bon2023} proved that any regular domain in Minkowski space is uniquely foliated by spacelike CMC hypersurfaces.
 
 A celebrated result by Cheng and Yau \cite{Cheng1976} states that the only entire solution to the vanishing mean curvature equation in Minkowski space is a linear function, and this holds in all dimensions. Xin \cite{Xin0} extended this by showing that any complete spacelike hypersurface with constant mean curvature in Minkowski space, whose Gauss map \(\gamma: M \to H^n(-1)\) is bounded, must necessarily be a linear subspace. In the context of three-dimensional Lorentz–Minkowski space, Rafael \cite{Raf2007} proved that rotational symmetric surfaces are the only solutions for CMC spacelike surfaces with a circular boundary.
 For periodic maximal surfaces, Fernández and López \cite{Fern2007} provided a classification of complete flat Lorentzian 3-manifolds that admit entire maximal surfaces of finite type. Recently, a one-to-one correspondence between solutions of infinite boundary value problems for minimal surfaces and those of lightlike line boundary value problems for maximal surfaces in Lorentz-Minkowski spacetime was established in \cite{Akm2024}.  
 
    For other types of boundary value problems, Mawhin and Torres \cite{Maw2016} investigated the Neumann boundary condition for the prescribed mean curvature surface problem in \(B_R(0)\) within the framework of FLRW spacetime. The spacetime is characterized by the metric \(ds^2 = -dt^2 + f(t)dx^2\), where \(f(t) \in C^1[a, b]\). Under suitable assumptions on \(f\), they proved the existence of at least one radially symmetric solution for any prescribed mean curvature \(H\).
    Further contributions to this field include the work of Urbas, who established the existence of Weingarten hypersurfaces with prescribed gradient images in \cite{J2}. In \cite{Ju3}, Urbas constructed a smooth pseudoconvex pair \((D_1, D_2)\) of domains in \(\mathbb{R}^2\) with equal areas, showing that no globally smooth minimal Lagrangian diffeomorphism exists from \(\overline{D_1}\) onto \(\overline{D_2}\).
   
 The study of fully nonlinear partial differential equations with second boundary value conditions has a long and rich history. Urbas \cite{Ju2,Ju1} investigated the existence of globally smooth solutions to Monge-Ampère-type equations and a class of Hessian equations by transforming the second boundary condition into an oblique derivative problem. Nessi and Gregory \cite{Nessi} established the existence of globally smooth classical solutions for a new class of modified Hessian equations, closely related to the optimal transportation equation, which satisfies the second boundary value problem. Jiang and Trudinger studied the global regularity of oblique boundary value problems for augmented Hessian equations for a class of general operators and developed a comprehensive theory for a priori derivative estimates up to the second order. In \cite{Jiang1}, they also obtained classical global \(C^3\) solutions to second boundary value problems for generated prescribed Jacobian equations under suitable assumptions on the target domain and mapping function. Shibing Chen et al. \cite{Chen1} established global \(C^{2,\alpha}\) and \(W^{2,p}\) regularity for the Monge-Ampère equation subject to the second boundary condition. Later, Savin et al. \cite{Savin} studied the global regularity of \(W^{2,1+\epsilon}\) estimates for the Monge-Ampère equation under the second boundary condition and demonstrated the optimality of the exponent \(1+\epsilon\).
 
 For the Lagrange mean curvature equation with the second boundary value condition:
 \begin{equation}\label{Ftauc}
 	\left\{
 	\begin{array}{rl}
 		\sum\limits_{i=1}^{n} \arctan \left(\lambda_{i}(D^2u)\right) &= c, \quad x \in \Omega, \\
 		Du(\Omega) &= \tilde{\Omega}.
 	\end{array}
 	\right.
 \end{equation}
 Brendle and Warren \cite{Brendle2008ABV} proved the existence and uniqueness of solutions to \eqref{Ftauc} using elliptic methods, while Huang \cite{Huang2014OnTS} obtained the existence of solutions by considering the second boundary value problem for the Lagrangian mean curvature flow. Wang, Huang, and Bao studied the Lagrangian mean curvature-type equation:
 \begin{equation}\label{preeq}
 	\left\{
 	\begin{array}{rl}
 		\sum\limits_{i=1}^{n} \arctan \left(\lambda_{i}(D^2u)\right) &= \kappa \cdot x+ c, \quad x \in \Omega, \\
 		Du(\Omega) &= \tilde{\Omega},
 	\end{array}
 	\right.
 \end{equation}
where \(\kappa\) is a constant vector, and the Lagrangian graph \((x, Du(x))\) has a prescribed constant mean curvature vector \(H = (0, \kappa)^{\perp}\) in Euclidean space. Here, \(Du\) is a diffeomorphism between two uniformly convex bounded domains. In \cite{Wang2023}, the authors proved the existence and uniqueness of smooth uniformly convex solutions when \(\kappa\) is sufficiently small, and they extended these results to the corresponding parabolic setting in \cite{Wang2024}. Recently, Jiang and Bao \cite{Jiangbao2025} studied the translator solution case, where the right-hand side of \eqref{preeq} takes the form \(\iota \cdot Du + \kappa \cdot x + c\) for a constant vector \(\iota\). They established analogous results when both \(\iota\) and \(\kappa\) are sufficiently small. In a series of works \cite{HR1}--\cite{HW}, Huang et al. investigated the second boundary value problem for various nonlinear equations using elliptic or parabolic methods and explored their geometric applications.

Motivated by the above works, 
We consider convex spacelike hypersurfaces with mean curvature in Minkowski space $\mathbb{R}^{n,1}$ with the Lorentzian metric
\begin{equation}\label{e1.1.1}
	ds^2=\sum^n_{i=1}dx_i^2-dx_{n+1}^2.
\end{equation}
Any such hypersurface can be written locally as a graph of a function $x_{n+1} = u(x)$, $x\in \mathbb{R}^n$, satisfying the spacelike condition
\begin{align}
    |Du|<1.
\end{align}

In this work, we investigate the existence and uniqueness of uniformly convex solutions to the mean curvature type equation
\begin{equation}\label{e1.1.3}
	\mathrm{div}\left(\frac{Du}{\sqrt{1-|Du|^2}}\right) = f(x, Du) + c, \quad x \in \Omega,
\end{equation}
coupled with the second boundary value condition
\begin{equation}\label{e1.1.4}
	Du(\Omega) = \tilde{\Omega},
\end{equation}
where \(f \in C^{2+\alpha}(\overline{\Omega}\times\overline{\tilde{\Omega}})\), 
and \(\Omega\) and \(\tilde{\Omega}\) are uniformly convex bounded domains with smooth boundaries in \(\mathbb{R}^n\). The boundary condition \eqref{e1.1.4} arises naturally for mean curvature type equation \eqref{e1.1.3}, as these equations are elliptic precisely on locally uniformly convex solutions. In this case, the gradient map \(Du\) becomes a diffeomorphism from \(\Omega\) onto its image \(\overline{Du(\Omega)} \subset B_1(0)\), where \(B_1(0)\) denotes the unit ball in \(\mathbb{R}^n\) equipped with the Klein model of hyperbolic geometry, represented by \(\{(x, 1) \in \mathbb{R}^{n,1} \mid |x| < 1\}\). 
Denote
\[\mathscr{A}:=\left\{f(x,p)\in C^{2+\alpha}(\overline{\Omega}\times\overline{\tilde{\Omega}}): f ~is~ concave ~in~x\right\}.\]

 Our main Theorems are stated as follows.
\begin{theorem}\label{t1.1}
 Suppose that \(\Omega\) and \(\tilde{\Omega}\) are uniformly convex bounded domains with smooth boundaries in \(\mathbb{R}^n\), and \(\tilde{\Omega} \subset\subset B_1(0)\). If \(f \in \mathscr{A}\) and the oscillation of \(f\), together with \(|D_x f|\) and \(|D_{xp} f|\), are sufficiently small, then there exists a uniformly convex solution \(u \in C^{4,\alpha}(\overline{\Omega})\) and a constant \(c\) satisfying \eqref{e1.1.3} and \eqref{e1.1.4}. The constant \(c\) depends only on \(n\), \(\Omega\), \(\tilde{\Omega}\), and \(\|f\|_{C^{2+\alpha}(\overline{\Omega} \times \overline{\tilde{\Omega}})}\). Furthermore, if \(f\) is smooth, then \(u \in C^{\infty}(\overline{\Omega})\). Here, \(u\) is unique up to a constant.
\end{theorem}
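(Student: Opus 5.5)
Following Urbas and Brendle--Warren, I will recast \eqref{e1.1.4} as an oblique boundary condition and then use the method of continuity, with uniqueness coming from the maximum principle. Choose a smooth uniformly concave defining function $h$ of $\tilde\Omega$, so that $\tilde\Omega=\{h>0\}$ and $|Dh|=1$ on $\partial\tilde\Omega$. For uniformly convex $u$, \eqref{e1.1.4} is then equivalent to
\[
h(Du)=0 \quad \text{on } \partial\Omega .
\]
Write the equation as
\[
F(D^2u,Du):=\frac{1}{\sqrt{1-|Du|^2}}\Big(\delta_{ij}+\frac{u_iu_j}{1-|Du|^2}\Big)u_{ij}=f(x,Du)+c .
\]
For the continuity path I take $u_t$ solving $F(D^2u_t,Du_t)=tf(x,Du_t)+c_t$ with $h(Du_t)=0$. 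At $t=0$ the problem is solvable: first solve a Monge--Amp\`ere or Brendle--Warren type second boundary problem to get a diffeomorphism, then deform the operator through $sF+(1-s)\log\det$. Alternatively I would note that the $t=0$ problem (prescribed constant mean curvature with prescribed Gauss image in the Klein ball) can be handled by the same a priori estimates. The constant $c_t$ is an unknown, and it is fixed by integrating the equation. By the divergence theorem,
\[
c_t|\Omega|=\int_{\partial\Omega}\frac{Du\cdot\nu}{\sqrt{1-|Du|^2}}-t\int_\Omega f ,
\]
and since $Du(\partial\Omega)=\partial\tilde\Omega\subset\subset B_1$, this bounds $|c_t|$ in terms of $n$, $\Omega$, $\tilde\Omega$ and $\|f\|$.

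For openness I linearize at $(u,c)$ in $C^{2,\alpha}\times\mathbb R$, modulo constants. The linearized operator is $L v=a^{ij}v_{ij}+b^iv_i$, where $b^i$ collects $\partial_pF$ and $-t f_p$, coupled with the oblique condition $h_{p_k}(Du)v_k=0$. Here obliqueness $h_{p_k}(Du)\nu_k>0$ follows from the uniform convexity of both domains, as in Urbas. The kernel consists of constants by the Hopf lemma, and the Fredholm index is zero once the unknown $c$ is included, so the implicit function theorem applies.

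Closedness needs a priori estimates uniform in $t$.
\begin{itemize}
\item[(i)] The gradient bound is automatic, since $|Du|\le \max_{\overline{\tilde\Omega}}|p|<1$, which keeps the equation uniformly spacelike. The $C^0$ bound follows after normalizing $u(x_0)=0$.
\item[(ii)] Next I prove the strict obliqueness estimate $\langle \nu, Dh(Du)\rangle\ge c_0>0$ on $\partial\Omega$. I would do this by applying the maximum principle to $\langle\nu,Dh(Du)\rangle$, or to a combination $h_{p_k}\tilde\nu_k+\ldots$, together with the Legendre dual $u^*$. The dual solves an analogous equation on $\tilde\Omega$ with $Du^*(\tilde\Omega)=\Omega$, and $f$ enters as $f(Du^*,y)$.
\item[(iii)] The main obstacle is the global $C^2$ bound, i.e.\ upper and lower bounds on $D^2u$. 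For the interior and global upper bound I would maximize $u_{\xi\xi}$ over $x\in\overline\Omega$ and unit $\xi$. Concavity of the operator in $D^2u$ is not available directly, since $F$ is linear in $D^2u$ but with coefficients depending on $Du$. Differentiating twice produces terms $f_{x\xi x\xi}$, which have a good sign precisely because $f$ is concave in $x$. It also produces terms involving $f_{xp}$ and $f_x$, which must be absorbed; this is where smallness of $|D_xf|$ and $|D_{xp}f|$ is used. The small oscillation of $f$ keeps $c_t$ and the right side close to the constant-curvature case, so that the boundary estimates of Urbas type, via tangential and double-normal derivatives using $h(Du)=0$, carry over.
\item[(iv)] The lower bound on $D^2u$ comes from the same argument applied to $u^*$ on $\tilde\Omega$, where smallness of the $f$-terms is again required.
\end{itemize}
Once $D^2u$ is two-sided bounded, the equation is uniformly elliptic and the boundary condition uniformly oblique. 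Lieberman--Trudinger type $C^{2,\alpha}$ estimates for oblique problems then give $C^{2,\alpha}$ bounds, and Schauder bootstrap gives $C^{4,\alpha}$, or $C^\infty$ when $f$ is smooth.

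For uniqueness, suppose $(u_1,c_1)$ and $(u_2,c_2)$ are two solutions. If $c_1> c_2$, consider $w=u_1-u_2$ at its maximum. An interior maximum contradicts the maximum principle for the linear equation satisfied by $w$, which has zero order term $c_1-c_2>0$ with the wrong sign. A boundary maximum is excluded by Hopf's lemma with the oblique condition: at a boundary maximum $Du_1-Du_2$ points outward, and uniform convexity of $\tilde\Omega$ together with $h(Du_1)=h(Du_2)=0$ forces the oblique derivative of $w$ to vanish. Hence $c_1=c_2$, and the same argument shows $w$ is constant.
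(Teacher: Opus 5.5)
Your overall architecture is the paper's: the reformulation $h(Du)=0$, the continuity path $tf+c_t$ started from the constant mean curvature case, the divergence-theorem bound on $c_t$, Brendle--Warren openness, obliqueness via the Legendre dual, and uniqueness by Hopf. The gap is in step (iii). The argument you propose there would not close, and you attach the hypotheses on $f$ to the wrong estimate.

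Differentiate $A^{ij}(Du)u_{ij}=f(x,Du)+c$ twice in $\xi$. The term $f_{x_\xi x_\xi}$ is merely bounded; concavity actually gives it the unfavourable sign for an upper bound, harmlessly. The terms that really obstruct a maximum principle for $u_{\xi\xi}$ are $f_{p_kp_l}u_{k\xi}u_{l\xi}$, $-2A^{ij}_{p_k}u_{k\xi}u_{ij\xi}$ and the cubic term $-A^{ij}_{p_kp_l}u_{ij}u_{k\xi}u_{l\xi}$. These come from the $Du$-dependence of the operator, and no smallness of $D_xf$ or $D_{xp}f$ touches them.

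Fortunately no such argument is needed. Since $u$ is convex, $0<\kappa_i\le\sum_j\kappa_j=f+c$, which is bounded above by your bound on $c_t$. Then $u_{kl}=v\,b_{ik}a_{im}b_{ml}$ with $|Du|\le\max_{\overline{\tilde\Omega}}|p|<1$ gives $D^2u\le C$ at once (Lemma~\ref{lem4.1}).

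All the real work is the lower bound for $D^2u$, i.e.\ the upper bound for $D^2\tilde u$, and that is where every hypothesis enters (Lemma~\ref{lem4.2}). In the dual equation $-s_{ij}(y)[D^2\tilde u]^{-1}_{ij}=-f(D\tilde u,y)-c$ the operator is concave and independent of $D\tilde u$. Differentiating twice produces three kinds of terms:
\begin{itemize}
\item $-f_{x_ix_j}\tilde u_{ik}\tilde u_{jk}\ge 0$, which is where concavity in $x$ is used;
\item $-2f_{x_ip_k}\tilde u_{ik}$, which can be absorbed into $\sup|D^2\tilde u|$ only because $|D_{xp}f|$ is small;
\item cross terms $\tilde G_{ij,y_k}\tilde u_{ijk}$, controlled by the concavity term together with $1/\tilde u_{ii}=u_{ii}\le C$.
\end{itemize}
So the trivial upper bound on $D^2u$ has to come first.

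You also omit the quantitative positivity $f+c\ge\Lambda_1>0$ on which every dual barrier rests. The paper gets it (Lemma~\ref{L2.1}) from $|\tilde\Omega|=\int_\Omega\det D^2u\le\int_\Omega (F/n)^n$. This forces $\max(f+c)\ge n(|\tilde\Omega|/|\Omega|)^{1/n}$, and the smallness of $\mathrm{osc}(f)$ then bounds $f+c$ below. Saying the problem stays ``close to the constant-curvature case'' does not replace this.

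The positivity yields $\mathcal{T}_{\tilde G}\ge\Lambda_8>0$. Only then is the drift of the dual linearized operator dominated: $\tilde G_{ij}h_{ij}+f_{x_i}h_i\le-\theta\mathcal{T}_{\tilde G}+|D_xf|\,|Dh|\le-\frac{\theta}{2}\mathcal{T}_{\tilde G}$. This is exactly where smallness of $|D_xf|$ is used: in the obliqueness estimate for $\tilde u$ and in the boundary and interior $C^2$ bounds for $\tilde u$.

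Finally, the boundary step of your uniqueness argument should be stated precisely. Hopf gives $D(u_1-u_2)(x_0)=\mu\nu_{\mathrm{out}}$ with $\mu>0$. Concavity of $h$ gives $0=h(Du_1)-h(Du_2)\le\mu\langle Dh(Du_2),\nu_{\mathrm{out}}\rangle$, and obliqueness makes the right side negative, a contradiction.
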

\begin{remark}
	It is well known that certain necessary conditions must be satisfied for the solvability of the second boundary value problem for elliptic partial differential equations \cite{urbas1,urbas2,ma}. In our context, the constant \(c\) serves to adjust the range of the function on the right-hand side, ensuring the solvability of the problem, see also Lemma \ref{L2.1}. This approach provides an alternative adjustment method distinct from the one employed in \cite{HOWY}.
\end{remark}


We go on considering the mean curvature type equation  in $\mathbb{R}^{n+1}$:
\begin{equation}\label{e12a}
\mathrm{div}\left(\frac{Du}{\sqrt{1+|Du|^2}}\right)=f(x,Du)+c,  \quad x\in \Omega,
\end{equation}
associated with the second boundary value problem
\begin{equation}\label{e12b}
Du(\Omega)=\tilde{\Omega}.
\end{equation}

Based on the same proof as Theorem \ref{t1.1}, an immediate consequence of  the above problem  is the following:
\begin{theorem}\label{t1.2}
Suppose that \(\Omega\) and \(\tilde{\Omega}\) are uniformly convex and bounded domains with smooth boundaries in \(\mathbb{R}^n\). If \(f \in \mathscr{A}\) and the oscillation of \(f\), together with \(|D_x f|\) and \(|D_{xp} f|\), are sufficiently small, then there exists a uniformly convex solution \(u \in C^{4,\alpha}(\overline{\Omega})\) and a constant \(c\) satisfying \eqref{e12a} and \eqref{e12b}. The constant \(c\) depends only on \(n\), \(\Omega\), \(\tilde{\Omega}\), and \(\|f\|_{C^{2+\alpha}(\overline{\Omega} \times \overline{\tilde{\Omega}})}\). Furthermore, if \(f\) is smooth, then \(u \in C^{\infty}(\overline{\Omega})\). Here, \(u\) is unique up to a constant.
\end{theorem}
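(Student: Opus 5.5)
Theorem \ref{t1.2} will be proved by running the argument for Theorem \ref{t1.1} again, replacing the Lorentzian operator by the Euclidean one. Write the equation as
\[
F(D^2u,Du):=\frac{1}{\sqrt{1+|Du|^2}}\Big(\delta_{ij}-\frac{u_iu_j}{1+|Du|^2}\Big)u_{ij}=f(x,Du)+c .
\]
On convex functions with $Du\in\overline{\tilde\Omega}$, this operator is uniformly elliptic. Indeed, $\tilde\Omega$ is bounded, so $|Du|\le C$, and the eigenvalues of the coefficient matrix lie between $(1+C^2)^{-3/2}$ and $1$. This is the main simplification compared with Theorem \ref{t1.1}: the spacelike degeneracy $|Du|\to1$ is absent, so the hypothesis $\tilde\Omega\subset\subset B_1(0)$ is not needed.

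Following Urbas and Brendle--Warren, we first convert \eqref{e12b} into an oblique boundary condition. Let $h$ be a smooth uniformly concave defining function of $\tilde\Omega$. Then \eqref{e12b} becomes $h(Du)=0$ on $\partial\Omega$. For a uniformly convex solution, the obliqueness quantity $\langle Dh(Du),\nu\rangle$ is positive, and a Legendre-transform argument will give a uniform lower bound for it.

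The a priori estimates then proceed as follows.
\begin{enumerate}
\item $|Du|\le C$ holds trivially.
\item $c$ is bounded, by the analogue of Lemma \ref{L2.1}. Integrating the divergence form over $\Omega$ gives $\int_{\partial\Omega}\frac{Du\cdot\nu}{\sqrt{1+|Du|^2}}=\int_\Omega (f+c)$, so $c$ is controlled by the geometry and by $\sup|f|$.
\item Obliqueness is bounded below by a positive constant.
\item $D^2u$ is bounded on $\partial\Omega$ and in the interior, with a positive lower bound on its eigenvalues.
\item Evans--Krylov and Lieberman--Trudinger oblique estimates give $C^{2,\alpha}$, and Schauder theory bootstraps to $C^{4,\alpha}$, and to $C^\infty$ when $f$ is smooth.
\end{enumerate}

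Concavity of $f$ in $x$ enters the interior second-derivative estimate. Differentiating the equation twice in a direction $e$ produces $f_{x_ex_e}\le0$ with a good sign, together with terms involving $f_{xp}$, $f_{pp}$ and $D_xf$. Where smallness is needed, these terms are absorbed by the smallness of $|D_xf|$ and $|D_{xp}f|$.

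The strict convexity lower bound is obtained on the Legendre side. Set $u^*$ to be the Legendre transform, defined on $\tilde\Omega$. It satisfies a dual equation of the same structure with the boundary condition $Du^*(\tilde\Omega)=\Omega$, so the same upper bound argument applies to $D^2u^*$.

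For existence we use the continuity method in $t\in[0,1]$. The family is
\[
\mathrm{div}\Big(\frac{Du}{\sqrt{1+|Du|^2}}\Big)=tf(x,Du)+c_t,\qquad h_t(Du)=0,
\]
where $\Omega_t$ interpolates between a ball and $\tilde\Omega$ (or between $\Omega$ and $\tilde\Omega$). The starting point is an explicit solution or one supplied by Brendle--Warren type results. Closedness follows from the estimates above.

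Openness follows from the implicit function theorem applied to the map $(u,c)\mapsto(F-f-c,\ h(Du))$, restricted to functions normalized by $\int_\Omega u=0$. The linearized operator is $L\varphi-f_{p}\cdot D\varphi-\sigma$ with the oblique condition $Dh\cdot D\varphi=0$. It is invertible on the normalized space with $\sigma\in\mathbb R$ free: the maximum principle and the Hopf lemma force $\sigma=0$ and $\varphi$ constant, and Fredholm theory then gives surjectivity.

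Uniqueness up to a constant is proved similarly. Given two solutions $(u_1,c_1)$ and $(u_2,c_2)$, their difference satisfies a linear elliptic equation with an oblique boundary condition. Comparing at the maximum and minimum points and using the Hopf lemma shows $c_1=c_2$ and that $u_1-u_2$ is constant.

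The main obstacle is step 4, the global $C^2$ estimate, especially on the boundary. On $\partial\Omega$ one must bound $u_{\beta\beta}$ along the oblique direction $\beta=Dh(Du)$, together with the tangential second derivatives. This requires barriers built from $h(Du)$ and the defining function of $\Omega$, plus the Urbas trick of maximizing $u_{\xi\xi}$ over the unit tangent bundle of $\partial\Omega$. The extra $x$- and $p$-dependence of $f$ generates error terms here, and these are exactly what the smallness of $\mathrm{osc}\,f$, $|D_xf|$ and $|D_{xp}f|$ is meant to absorb. I expect this step to carry over verbatim from the proof of Theorem \ref{t1.1}, because the coefficients of $F$ are bounded and smooth in $Du$ on $\overline{\tilde\Omega}$.
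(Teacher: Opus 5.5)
Your outline has the same architecture as the paper, which proves Theorem~\ref{t1.2} only by pointing back to the proof of Theorem~\ref{t1.1}. However, it misses the ingredient on which the whole Legendre side rests, and it misplaces the role of the oscillation hypothesis.

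\textbf{The missing lower bound on $H$.} Smallness of $\operatorname{osc} f$ is not there to absorb boundary error terms. In Lemma~\ref{L2.1} it gives a \emph{positive lower bound} $H=f(x,Du)+c\ge\Lambda_1>0$ for the mean curvature. This bound is necessary even for uniform convexity: for convex $u$, $H=0$ at a point forces $D^2u=0$ there. Your step~2 only produces two-sided bounds on $c$, which do not give it. In the Euclidean case the argument reads
\[
\int_\Omega\Big(\frac{H}{n}\Big)^n dx\ge\int_\Omega\frac{\det D^2u}{(1+|Du|^2)^{\frac{n+2}{2}}}\,dx\ge(1+R^2)^{-\frac{n+2}{2}}|\tilde\Omega|,\qquad R=\max_{\overline{\tilde\Omega}}|y| .
\]
This gives a point where $H$ is bounded below, and $\operatorname{osc}H\le\operatorname{osc}f$ then spreads the bound over all of $\Omega$. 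Here the weight works against you, unlike in the Lorentzian case. So the threshold in \eqref{oscf} must be multiplied by $(1+R^2)^{-\frac{n+2}{2n}}$.

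\textbf{Why the dual side fails without it.} Your claim that $u^*$ satisfies ``a dual equation of the same structure'' does not hold without this bound. The dual operator is $\tilde G=-\operatorname{tr}\big(s\,[D^2\tilde u]^{-1}\big)$, where $s=(s_{ij}(y))$ is the coefficient matrix as in \eqref{GeqS}. Its coefficient matrix is $D^2u\,s\,D^2u$. It is therefore not a priori uniformly elliptic, and $\mathcal T_{\tilde G}\simeq|D^2u|^2$.

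Every barrier on the dual side has to absorb $O(1)$ terms into $-\theta\mathcal T_{\tilde G}$:
\begin{itemize}
\item the terms $f_ih_i$ and $f_{p_kp_k}$ in Lemmas~\ref{lem4.2}, \ref{tildevnestimate}, \ref{lem4.3}, \ref{lem4.5};
\item the terms $\tilde h_{p_k}(\tilde G_{y_k}+f_{p_k})$ and $\tilde G_{ij}\tilde h_{p_kp_l}\tilde u_{ki}\tilde u_{lj}=\tilde h_{p_kp_l}s_{kl}$ in \eqref{e3.39}.
\end{itemize}
This absorption is possible only if $\mathcal T_{\tilde G}\ge\Lambda_8>0$ (Corollary~\ref{c3.4}), and that bound comes precisely from $H\ge\Lambda_1$. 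As written, neither your strict obliqueness estimate nor your positive lower bound on $D^2u$ is supported.

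\textbf{Smaller points.}
\begin{itemize}
\item The upper bound on $D^2u$ needs no boundary analysis. Since $\kappa_i\ge0$ and $\sum\kappa_i\le\Lambda_2$, it follows at once (Lemma~\ref{lem4.1}). All the real $C^2$ work is for $\tilde u$ on $\partial\tilde\Omega$.
\item That is also where concavity of $f$ in $x$ matters. In $f(D\tilde u,y)$ it makes the term $-f_{ij}\tilde u_{ik}\tilde u_{jk}$, which is quadratic in $D^2\tilde u$, nonnegative. Smallness of $|D_{xp}f|$ absorbs the linear term $2f_{ip_k}\tilde u_{ik}$, as in \eqref{intobod}. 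In the primal equation, $f_{x_ex_e}$ is only a bounded term.
\item Brendle--Warren does not supply a starting solution for this equation. The paper starts from the $f\equiv0$ constant mean curvature problem on the fixed pair $(\Omega,\tilde\Omega)$.
\item An explicit spherical cap exists only when both domains are balls. Your path would therefore also need a domain deformation with $f\equiv0$, with estimates uniform along it.
\end{itemize}
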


The rest of this article is organized as follows. In section \ref{sec2}, we introduce some basic formulas and notations, and then present the structure condition for the mean curvatrue type equation. By the second boundary value condition \eqref{e1.1.4}, the gradient \( Du \) is bounded. Consequently, in section \ref{sec3}, we devote to carry out the strictly oblique estimate and then in section \ref{sec4} we obtain the  $C^2$ estimate according to the structure properties of the operators $G$ and $\tilde{G}$. We will prove the main theorem by the continuity method as same as Wang-Huang-Bao \cite{Wang2023} in section \ref{sec5}.

Throughout the following, Einstein’s convention of summation over repeated indices
will be adopted. We denote, for a smooth function $f(x,p)$,
\[f_{i}=\frac{\partial f}{\partial x_{i}},\quad f_{p_i}=\frac{\partial f}{\partial p_{i}},\quad f_{ij}=\frac{\partial^{2}f}{\partial x_{i}\partial x_{j}},\quad f_{p_ip_j}=\frac{\partial^{2}f}{\partial p_{i}\partial p_{j}},\quad f_{ijk}=\frac{\partial^{3}f}{\partial x_{i}\partial x_{j}\partial x_{k}},\ldots.\]

\section{Preliminaries}\label{sec2}
In this section, we will derive some basic formulas for the geometric quantities of spacelike hypersurfaces in Minkowski space $\mathbb{R}^{n,1}$.
We then give the  structure condition for the mean curvature type equation referring to \cite{Wang2023}.

We start with the definitions and notations of differential geometry for graphic hypersurface in Minkowski space $\mathbb{R}^{n,1}$,
the readers can see \cite{EH} and \cite{Ec} for a nice introduction.
A spacelike hypersurface $M\subset \mathbb{R}^{n,1}$ is a codimension one submanifold  with the Lorentzian metric \eqref{e1.1.1}
whose induced metric is Riemannian. Locally $M$ can be written as a graph
\begin{align}
    \mathcal{M}_u = \{X = (x, u(x))|x \in \mathbb{R}^n\}
\end{align}
satisfying the spacelike condition $(\ref{e1.1.4})$ and $\mathcal{M}_u$ is a uniformly convex hypersurface. Let $\mathcal{N}$ be the timelike unit normal vector to $\mathcal{M}_u$ at $x$, one can defiine the Gauss map $\mathscr{G}$:
\begin{align*}
	 \mathcal{M}_u\longrightarrow S_{1,+}^n: x\longrightarrow\mathcal{N}(x)
\end{align*}
where $S_{1,+}^n:=\left\{\left(x_1,\cdots,x_n,x_{n+1}\right)|x_1^2+\cdots+x_n^2-x_{n+1}^2=-1,x_{n+1}\geq0\right\}$ and 
$$\mathcal{N}(x):=\dfrac{(-Du,1)}{\sqrt{1-|Du|^2}}.$$
If we take the hyperplane
$$\mathcal{P}:=\left\{X=(x_1,\cdots,x_n,x_{n+1})|x_{n+1}=1\right\}$$
and consider the projection of $S_{1,+}^n$ from the origin into $\mathcal{P}$. Then $S_{1,+}^n$ is mapped a one-to-one fashion onto the ball $B=\left\{(y_1,\cdots,y_n)\in \mathbb{R}^n|y_1^2+ \cdots y_n^2\leq1\right\}$.
Then map $\mathscr{P}$ is given by 
$$S_{1,+}^n\longrightarrow B: (x_1,\cdots,x_n,x_{n+1})\to(y_1,\cdots,y_n)$$
where $x_{n+1}=\sqrt{1+x_1^2+ \cdots x_n^2}$ and $ y_i=-\frac{x_i}{x_{n+1}}$. Thus it yields that
$$\mathscr{P}\circ \mathscr{G}:\left(x,u(x)\right)\in \mathcal{M}_u\longrightarrow Du(x)\in B.$$
Then the gradient map $Du$ is equivalent to the Gauss map(see Lemma 4.5 in \cite{Choi}).

It is easy to see that the induced metric and second
fundamental form of $M $ are given by
\begin{align}
g_{ij}=\delta_{ij}-D_{i}uD_{j}u,\quad 1\leq i,j\leq n.
\end{align}

While the inverse of the induced metric and second fundamental form of $M$ are given respectively by
\begin{align}
g^{ij}=\delta_{ij}+\frac{D_{i}uD_{j}u}{1-|Du|^{2}},\quad 1\leq i,j\leq n,
\end{align}
and
\begin{align}
h_{ij}=\frac{D_{ij}u}{\sqrt{1-|Du|^{2}}},\quad 1\leq i,j\leq n.
\end{align}
where $|Du|=\sqrt{\sum_{i=1}^{n}|D_{i}u|^{2}}$. Specially, the mean curvature of $M$ is written as
\begin{equation}\label{e1.11}
H=\sum_{1\leq i\leq n}\kappa_{i}.
\end{equation}
where $\kappa_{i},i=1,\cdots,n$  are the principal curvatures of $M\subset \mathbb{R}^{n,1}$.

We aim to construct convex spacelike mean curvature  type hypersurfaces with prescribed Gauss
image and some types of mean curvature over any strictly convex domains by solving problem \eqref{e1.1.3} and \eqref{e1.1.4}.

It follows from \cite{LLJ}  that  we can state various geometric quantities associated with the graph of  $u\in C^{2}(\Omega)$.
In the coordinate systems, Latin indices range from 1 to $n$ and indicate quantities in the graph.
We adopt the Einstein's convention of summation over repeated indices in the following.\\
Let
\begin{equation}
        F(\kappa_{1},\cdots, \kappa_{n}):=\sum_{i=1}^n\kappa_{i},
\end{equation}
be a smooth function on the positive cone
\begin{align*}
    \Gamma^+_n:=\left\{(\kappa_1,\cdots,\kappa_n)\in \mathbb{R}^n:\kappa_i>0,\ i=1,\cdots,n\right\}.
\end{align*}

  The $F$ is a smooth symmetric function defined on $\Gamma^+_n$. Acorrding to  \cite{ou}, the $F$ satisfies
\begin{equation}\label{e2.2.10}
\frac{\partial F}{\partial \kappa_i}>0,\ \ 1\leq i\leq n\ \  \text{on}\ \  \Gamma^+_n,
\end{equation}
\begin{equation}
\sum_{i=1}^n\frac{\partial F}{\partial \kappa_i}=n \ \   \text{on}\ \  \Gamma^+_n,
\end{equation}
and
  \begin{equation}
	\sum_{i=1}^n\frac{\partial F}{\partial \kappa_i}\kappa_i= F,
\end{equation}
and
\begin{equation}\label{e2.2.12}
\left(\frac{\partial^2 F}{\partial \kappa_i\partial \kappa_j}\right)\leq 0\ \  \text{on}\ \  \Gamma^+_n.
\end{equation}

The following two lemmas give the structural conditions of $F$. 
\begin{lemma}\label{L2.1}
    Assume that $\Omega, \tilde{\Omega}$ are bounded, uniformly convex domains with smooth boundary in $\mathbb{R}^n$ and  $\tilde{\Omega}\subset\subset B_1(0)$. Suppose the oscillation of $f$ satisfing
   \begin{equation}\label{oscf}
   	osc(f):=\max_{x,y\in \overline{\Omega},p,q\in \overline{\tilde{\Omega}}}|f(x,p)-f(y,q)|\leq \frac{n}{2}\left( \frac{|\tilde{\Omega}|}{|\Omega|}\right)^{\frac{1}{n}}.
   \end{equation}
    where $|\Omega|$ is the volume of $\Omega$.
     If the strictly convex solution to \eqref{e1.1.3} and \eqref{e1.1.4} exists, then there exist positive constants $\Lambda_1$ and $\Lambda_2$, depending only on $n,\Omega, \tilde{\Omega}$ and $f$, such that there holds
\begin{align}\label{e2.2.8}
    \Lambda_1 \leq F(\kappa) \leq \Lambda_2.
\end{align}
\end{lemma}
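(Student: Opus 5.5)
The idea is that on a solution the mean curvature satisfies $F(\kappa)=H=f(x,Du)+c$. So it suffices to bound the constant $c$ from below and from above, up to the values of $f$, and then use the oscillation hypothesis \eqref{oscf} to spread each bound from one point to all of $\overline{\Omega}$. The lower bound comes from the Jacobian of the gradient map combined with the arithmetic–geometric mean inequality. The upper bound comes from the divergence form of the operator together with $\tilde{\Omega}\subset\subset B_1(0)$.

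\emph{Lower bound.} Let $u$ be a strictly convex solution. Then $Du$ is a diffeomorphism of $\Omega$ onto $\tilde{\Omega}$, so
\[
\int_\Omega \det D^2u\,dx=|\tilde{\Omega}| .
\]
Hence there is a point $x_0\in\Omega$ with $\det D^2u(x_0)\ge |\tilde{\Omega}|/|\Omega|$.

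The principal curvatures $\kappa_i$ are the eigenvalues of $g^{-1}h$, where $g^{ij}=\delta_{ij}+\frac{D_iuD_ju}{1-|Du|^2}$ and $h_{ij}=D_{ij}u/\sqrt{1-|Du|^2}$. This gives
\[
\prod_i\kappa_i=\frac{\det D^2u}{(1-|Du|^2)^{\frac{n+2}{2}}}\ \ge\ \det D^2u .
\]
Since every $\kappa_i>0$, the AM–GM inequality gives $H\ge n\left(\prod\kappa_i\right)^{1/n}$. At $x_0$ this yields
\[
f(x_0,Du(x_0))+c\ \ge\ n\left(\frac{|\tilde{\Omega}|}{|\Omega|}\right)^{1/n}.
\]
So $c\ge n(|\tilde{\Omega}|/|\Omega|)^{1/n}-\max f$. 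Then, for every $x\in\Omega$, \eqref{oscf} gives
\[
F(\kappa)=f+c\ \ge\ n\left(\frac{|\tilde{\Omega}|}{|\Omega|}\right)^{1/n}-\mathrm{osc}(f)\ \ge\ \frac n2\left(\frac{|\tilde{\Omega}|}{|\Omega|}\right)^{1/n}=:\Lambda_1 .
\]
This step is where the specific constant in \eqref{oscf} is used, and it is the main point to check carefully. The key fact is that the Lorentzian factor $(1-|Du|^2)^{-(n+2)/2}\ge1$ works in our favor. One also needs that $\det g^{-1}$ only increases the product of curvatures; this is where I expect the computation to require attention.

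\emph{Upper bound.} Set $\delta:=\max_{p\in\overline{\tilde{\Omega}}}|p|<1$. Integrating the equation and applying the divergence theorem,
\[
\int_\Omega (f+c)\,dx=\int_{\partial\Omega}\frac{Du\cdot\nu}{\sqrt{1-|Du|^2}}\,dS\ \le\ \frac{\delta\,|\partial\Omega|}{\sqrt{1-\delta^2}} .
\]
(If $u$ is only $C^2$ in the interior, first apply this on smooth subdomains exhausting $\Omega$.) Consequently some point satisfies $f+c\le \delta|\partial\Omega|/(|\Omega|\sqrt{1-\delta^2})$, so $c\le \delta|\partial\Omega|/(|\Omega|\sqrt{1-\delta^2})-\min f$. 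Therefore everywhere
\[
F(\kappa)\le \frac{\delta|\partial\Omega|}{|\Omega|\sqrt{1-\delta^2}}+\mathrm{osc}(f)=:\Lambda_2 .
\]
Both $\Lambda_1$ and $\Lambda_2$ depend only on $n$, $\Omega$, $\tilde{\Omega}$ and $f$, which proves \eqref{e2.2.8}.
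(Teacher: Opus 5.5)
Your proof is correct and is essentially the paper's argument. The lower bound comes from $\int_\Omega \det D^2u\,dx=|\tilde{\Omega}|$, the Lorentzian factor $(1-|Du|^2)^{-(n+2)/2}\ge 1$ and AM--GM, extended to all of $\Omega$ by the oscillation hypothesis; the upper bound comes from the divergence theorem together with $\tilde{\Omega}\subset\subset B_1(0)$. The differences are cosmetic: you apply AM--GM pointwise at a point where $\det D^2u$ is at least its average, whereas the paper applies it under the integral, and choosing a point where $f+c$ is at most its mean gives you the slightly sharper upper constant with $\mathrm{osc}(f)$ in place of the paper's $2\max|f|$.
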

\begin{proof}
Since $Du(\Omega)=\tilde{\Omega}$, $\tilde{\Omega} \subset\subset {B}_1(0)$,  we have
\begin{align*}
	|\tilde{\Omega}|&=\int_{\tilde{\Omega}}dy=\int_{\Omega}\det D^2udx\\&=\int_{\Omega}\left(1-|Du|^2\right)^{\frac{n+2}{2}}\frac{\det D^2u}{\left(1-|Du|^2\right)^{\frac{n+2}{2}}}dx.
\end{align*}
By noting that
\begin{align*}
	\kappa_1\cdot\kappa_2\cdots\kappa_n=\frac{\det D^2u}{\left(1-|Du|^2\right)^{\frac{n+2}{2}}},
\end{align*}
then we can get
\begin{align*}
 \int_{\Omega} \left(\frac{\kappa_1+\cdots+\kappa_n}{n}\right)^n  dx&\geq \int_{\Omega}\frac{\det D^2u}{\left(1-|Du|^2\right)^{\frac{n+2}{2}}}dx\\
 &\geq\int_{\Omega}\det D^2udx=|\tilde{\Omega}|
\end{align*}
Then we can find $\bar{x}\in \Omega$ such that 
$$F(\kappa)|_{\bar{x}}\geq n\left( \frac{|\tilde{\Omega}|}{|\Omega|}\right)^{\frac{1}{n}}.$$
Since the functions \(f\) and \(F\) share the same oscillation, it follows \eqref{oscf} that
$$F(\kappa)\geq  \frac{n}{2}\left( \frac{|\tilde{\Omega}|}{|\Omega|}\right)^{\frac{1}{n}}.$$
Integrating over 
on the both sides of \eqref{e1.1.3}, we can obtain
\begin{align*}
	\int_{\Omega}\mathrm{div}\left(\frac{Du}{\sqrt{1-|Du|^{2}}}\right)dx-\int_{\Omega}fdx=c|\Omega|,
\end{align*}
By using the
divergence theorem, we see that
\begin{align*}
c=\frac{1}{|\Omega|}\int_{\partial\Omega}\frac{Du\cdot\nu}{\sqrt{1-|Du|^{2}}}ds-\frac{1}{|\Omega|}\int_{\Omega}fdx,
\end{align*}
where $\nu=(\nu_1,\nu_2,\cdots,\nu_n)$ is  the unit outward normal vector of $\partial\Omega$, and then
\begin{align*}
	c&\leq \frac{1}{|\Omega|}\int_{\partial\Omega}\frac{|Du|}{\sqrt{1-|Du|^{2}}}ds+\max_{\overline{\Omega}\times\overline{\tilde{\Omega}}}|f(x,p)|.
\end{align*}
Therefore,
$$F(\kappa)\leq \frac{|\partial\Omega|}{|\Omega|}\operatorname*{max}_{y\in\partial\tilde{\Omega}}\frac{|y|}{\sqrt{1-|y|^{2}}}+2\max_{\overline{\Omega}\times\overline{\tilde{\Omega}}}|f(x,p)|.$$
Thus the proof of \eqref{e2.2.8} is
completed.
\end{proof}
\begin{lemma}\label{e3.2.2} Assume that $\Omega$, $\tilde{\Omega}$ are bounded, uniformly convex domains with smooth boundary in $\mathbb{R}^{n}$ and   $\tilde{\Omega}\subset\subset B_1(0)$. Suppose $f$ satisfy \eqref{oscf} and  $u$  is the strictly convex solution to $(\ref{e1.1.3})$ and $(\ref{e1.1.4})$. Then there exist positive constants $\Lambda_3$ and $\Lambda_4$,  such that there holds
  \begin{align}
 \label{e3.1}
 \Lambda_3\leq\sum^{n}_{i=1}\frac{\partial F}{\partial \kappa_{i}}\leq \Lambda_4
 \end{align}
and
\begin{equation}\label{e3.2}
\Lambda_3\leq\sum^{n}_{i=1}\frac{\partial F}{\partial \kappa_{i}}\kappa^{2}_{i}\leq \Lambda_4.
\end{equation}
\end{lemma}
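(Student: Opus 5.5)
The plan is to treat $F$ only through the structural properties \eqref{e2.2.10}--\eqref{e2.2.12}: positivity of $\partial F/\partial\kappa_i$, the Euler identity $\sum_i \frac{\partial F}{\partial \kappa_i}\kappa_i=F$, concavity, and symmetry. These will be combined with the two-sided bound $\Lambda_1\le F(\kappa)\le\Lambda_2$ of Lemma \ref{L2.1}, which is available under \eqref{oscf}. I abbreviate $F_i=\partial F/\partial\kappa_i$ and write $\mathbf 1=(1,\dots,1)$.

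\textbf{Step 1: lower bound in \eqref{e3.1}.} By concavity \eqref{e2.2.12}, for any $y\in\Gamma_n^+$ we have $F(y)\le F(\kappa)+\sum_i F_i(\kappa)(y_i-\kappa_i)$. I take $y=t\mathbf 1$ with $t>0$ and use the Euler identity to cancel $F(\kappa)$ against $\sum_i F_i\kappa_i$. This gives
\[
F(t\mathbf 1)\le t\sum_i F_i(\kappa),
\]
and degree-one homogeneity (implied by the Euler identity) turns the left side into $tF(\mathbf 1)$. Hence $\sum_i F_i\ge F(\mathbf 1)>0$, which is a constant depending only on $n$.

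\textbf{Step 2: upper bound in \eqref{e3.1}.} The normalization $\sum_i F_i=n$ stated on $\Gamma_n^+$ gives $\Lambda_4\ge n$ at once. If one only wants to use concavity, I would instead argue by symmetry: order $\kappa_1\le\dots\le\kappa_n$, so that concavity gives $F_1\ge\dots\ge F_n$. Then $\sum_i F_i\le nF_1$. Next I would test the concavity inequality at $y=\kappa+s e_1$ to control $F_1$ by $F(\kappa+se_1)-F(\kappa)$. This in turn is bounded using Lemma \ref{L2.1} together with monotonicity.

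\textbf{Step 3: lower bound in \eqref{e3.2}.} Cauchy--Schwarz with weights $F_i>0$, followed by the Euler identity, gives
\[
F^2=\Big(\sum_i F_i\kappa_i\Big)^2\le \Big(\sum_i F_i\Big)\Big(\sum_i F_i\kappa_i^2\Big).
\]
Therefore $\sum_i F_i\kappa_i^2\ge \Lambda_1^2/\Lambda_4$, using Lemma \ref{L2.1} and Step 2.

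\textbf{Step 4: upper bound in \eqref{e3.2}; expected main obstacle.} Bounding $\sum_i F_i\kappa_i^2$ from above by $\max_i\kappa_i\cdot\sum_i F_i\kappa_i=\max_i\kappa_i\cdot F$ requires a bound on the largest principal curvature. That is a $C^2$-type quantity and is not yet available at this stage.

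First attempt: exploit how $F_i$ decays where $\kappa_i$ is large, to show that $F_i\kappa_i$ is bounded for each $i$ (for example via concavity at $y=2\kappa$ compared with $y=\kappa$ in the $i$-th slot). Combined with $\sum_i F_i\kappa_i=F\le\Lambda_2$, this would give
\[
\sum_i F_i\kappa_i^2\le \Big(\max_i F_i\kappa_i\Big)\sum_i\kappa_i .
\]

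Fallback: pass to the Legendre-dual formulation on $\tilde\Omega$. There the roles of $\kappa_i$ and $1/\kappa_i$ are exchanged, so the quantity $\sum_i F_i\kappa_i^2$ becomes the trace $\sum_i \tilde F_i$ of the dual operator. That trace would then be bounded exactly as in Steps 1--2, using that $\tilde\Omega\subset\subset B_1(0)$ keeps $1-|Du|^2$ bounded below.
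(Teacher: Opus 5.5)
There is a genuine gap: Step 4, the upper bound in \eqref{e3.2}, is never proved. Steps 1--3 are correct. Step 1 is more than you need, since here $F=\sum_i\kappa_i$ gives $\partial F/\partial\kappa_i\equiv 1$ and hence $\sum_i\partial F/\partial\kappa_i=n$ exactly.

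The obstacle you identify does not exist for this operator. The lemma is about the specific $F(\kappa)=\sum_i\kappa_i$. For a strictly convex solution, $\kappa\in\Gamma_n^+$, so every $\kappa_i$ is positive and the largest principal curvature is bounded by $F$ itself: $\max_i\kappa_i\le\sum_i\kappa_i=F\le\Lambda_2$ by Lemma \ref{L2.1}. Hence
\[
\sum_{i=1}^n\frac{\partial F}{\partial\kappa_i}\kappa_i^2=\sum_{i=1}^n\kappa_i^2\le\Big(\sum_{i=1}^n\kappa_i\Big)^2=F^2\le\Lambda_2^2 .
\]
Combined with $\sum_i\kappa_i^2\ge\frac1n F^2\ge\frac1n\Lambda_1^2$, this is exactly the paper's proof. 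The ``$C^2$-type quantity'' is therefore already controlled by Lemma \ref{L2.1}. You lost this by treating $F$ only through the abstract structure conditions \eqref{e2.2.10}--\eqref{e2.2.12}, from which the upper bound in \eqref{e3.2} would not follow.

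Neither of your proposed routes closes the gap as written.
\begin{itemize}
\item Your first attempt bounds $\sum_i F_i\kappa_i^2$ by $\max_i F_i\kappa_i$ times $\sum_i\kappa_i$. The first factor is trivially bounded, since $0<F_i\kappa_i\le\sum_j F_j\kappa_j=F$. The second factor is exactly what you have not bounded, unless you use $\sum_i\kappa_i=F$, which is the missing observation.
\item The Legendre-dual fallback is circular. Since $\tilde F(\eta)=-F(\eta^{-1})$, one has $\sum_i\partial\tilde F/\partial\eta_i=\sum_i F_i\kappa_i^2$ identically, so the dual trace is the same quantity and nothing is gained.
\item The normalization $\sum_i F_i=n$ used in Step 2 does not transfer to $\tilde F$. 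Your alternative concavity argument in Step 2, testing at $\kappa+se_1$, yields $F_1\ge \bigl(F(\kappa+se_1)-F(\kappa)\bigr)/s$. That is a lower bound on $F_1$, not an upper bound.
\item The lower bound on $1-|Du|^2$ coming from $\tilde\Omega\subset\subset B_1(0)$ plays no role in bounding this purely eigenvalue expression.
\end{itemize}
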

\begin{proof}
  We observe that the operator
 $$F=\sum_{i=1}^n\kappa_{i}$$ and $$\frac{\partial F}{\partial \kappa_{i}}=1.$$
Then $$\sum^{n}_{i=1}\frac{\partial F}{\partial \kappa_{i}}=n,$$
with
$$\sum^{n}_{i=1}\frac{\partial F}{\partial \kappa_{i}}\kappa^{2}_{i}=\sum^{n}_{i=1}\kappa^{2}_{i},$$
and using Cauchy inequality, we obtain
$$ \frac{1}{n}(\kappa_1+\cdots+\kappa_n)^2\leq \kappa_1^2+\cdots+\kappa_n^2\leq (\kappa_1+\cdots+\kappa_n)^2 ,$$
combining with \eqref{e2.2.8} in Lemma \ref{L2.1}, we know $F$ is bounded and we obtain the
desired results.
\end{proof}

The principal curvatures of $M\subset \mathbb{R}^{n,1}$ are the eigenvalues of the second fundamental form
$h_{ij}$ relative to $g_{ij}$, i.e., the eigenvalues of the mixed tensor $h^{j}_{i}\equiv h_{ik}g^{kj}$.
By \cite{LLJ} we remark that they are the eigenvalues of the symmetric matrix
\begin{equation}\label{e2.2.13}
	a_{ij}=\frac{1}{v}b^{ik}D_{kl}ub^{lj},
\end{equation}
where $v=\sqrt{1-|Du|^{2}}$ and $b^{ij}$ is the positive square root of $g^{ij}$ taking the form
\begin{align*}
	b^{ij}=\delta_{ij}+\frac{D_{i}uD_{j}u}{v(1+v)}.
\end{align*}
The inverse of $b^{ij}$ is
\begin{align*}
	b_{ij}=\delta_{ij} - \frac{D_{i}uD_{j}u}{1+v},
\end{align*}
which is the square root of $g_{ij}$.
And then
\begin{align*}
	D_{ij}u=vb_{ik}a_{kl}b_{lj}.
\end{align*}
By some calculation, it yields
\begin{align*}
	a_{ij}=\frac{1}{v}\left (D_{ij}u - \frac{D_iuD_luD_{jl}u}{v(1+v)} - \frac{D_juD_luD_{il}u}{v(1+v)} + \frac{D_iuD_kuD_luD_juD_{kl}u}{v^2(1+v)^2}\right).
\end{align*}

Denote $\mathcal{A}=[a_{ij}]$ and $F[\mathcal{A}]=\sum_{i=1}^n\kappa_{i}$. Then the properties of the operator $F$ are reflected in (\ref{e2.2.8})-(\ref{e3.2}).
It follows from (\ref{e2.2.10}) that
\begin{equation*}
	F_{ij}[\mathcal{A}]\xi_{i}\xi_{j}>0 \quad \mathrm{for}\quad \mathrm{all}\quad \xi\in \mathbb{R}^{n}-\{0\},
\end{equation*}
where
\begin{equation*}
	F_{ij}[\mathcal{A}]=\frac{\partial F[\mathcal{A}]}{\partial a_{ij}}.
\end{equation*}

From \cite{JS} we see that $[F_{ij}]$ is diagonal if $\mathcal{\mathcal{A}}$ is diagonal, and in this case
\begin{equation*}
	[F_{ij}]=\mathrm{diag}(\frac{\partial F}{\partial \kappa_{1}},\cdots, \frac{\partial F}{\partial \kappa_{n}})=\mathrm{diag}(1,\cdots,1).
\end{equation*}

If $u$ is convex, by (\ref{e2.2.13}) we deduce that the  eigenvalues of the  matrix $[a_{ij}]$ must be in $\overline{\Gamma^+_n}$.
Then (\ref{e2.2.12}) implies  that
\begin{align*}
	F_{ij,kl}[\mathcal{A}]\eta_{ij}\eta_{kl}\leq 0,
\end{align*}
for any real symmetric matrix $[\eta_{ij}]$, where
\begin{equation*}
	F_{ij,kl}[\mathcal{A}]=\frac{\partial^{2}F[\mathcal{A}]}{\partial a_{ij}\partial a_{kl}}.
\end{equation*}

According to the equation (\ref{e1.1.3}), we consider the nonlinear differential operators of the type
\begin{equation*}\label{e2.10}
G(Du,D^{2}u)=f(x,Du)+c.
\end{equation*}

As in \cite{J2}, by differentiating this equation once, we have
\begin{equation*}
G_{ij}D_{ijk}u+G_{p_i}D_{ik}u=f_k+f_{p_i}u_{ik},
\end{equation*}
where we use the notation
\begin{equation*}
G_{ij}=\frac{\partial G}{\partial r_{ij}} \quad \text{and} \quad G_{p_i}=\frac{\partial G}{\partial p_{i}},
\end{equation*}
with $r$ and $p$  representing for the second derivatives and gradient variables respectively.
So as to prove the strict obliqueness estimate for the problem (\ref{e1.1.3})-(\ref{e1.1.4}), we need to recall some expressions from \cite{J2} for the derivatives of $G$.  A simple calculation yields
\begin{equation}\label{e2.2.14}
G_{ij}=F_{kl}\frac{\partial a_{kl}}{\partial r_{ij}}=\frac{1}{v}b^{ik}F_{kl}b^{lj}
\end{equation}
and
\begin{equation*}\label{e2.12}
G_{p_i}=F_{kl}\frac{\partial a_{kl}}{\partial p_i}=\frac{u_i}{v^2}F_{kl}a_{kl}+\frac{2}{v}F_{kl}a_{ml}b^{ik}u_m.
\end{equation*}

We observe that $\mathcal{T}_{G}=\sum^{n}_{i=1}G_{ii}$ is the trace of a product of three matrices by (\ref{e2.2.14}), so it is invariant under orthogonal transformations. Hence, to compute $\mathcal{T}_{G}$, we may assume for now that $[a_{ij}]$ is diagonal. By virtue of (\ref{e1.1.4}) and $\tilde{\Omega}\subset\subset B_{1}(0)$, we obtain that $Du$ and $\frac{1}{v}$  are bounded. Then eigenvalues of $[b^{ij}]$ are bounded between two controlled positive constants.
We can observe that
\begin{align*}
\mathcal{T}_{G}=\sum_{i=1}^nG_{ii}=\frac{1}{v}b^{ik}F_{kl}b^{li},
\end{align*}
it follows that there exist positive constants $\sigma_{1}$,
$\sigma_{2}$ depending only on the least upper bound of $|Du|$ in the set $\Omega$, such that
\begin{equation}\label{e2.14}
\sigma_{1}\mathcal{T}\leq\mathcal{T}_{G}\leq\sigma_{2}\mathcal{T},
\end{equation}
where $\mathcal{T}=\sum^{n}_{i=1}F_{ii}$.
By the concavity of $F$ and the positive definiteness of $[F_{ij}a_{ij}]$ imply that$[F_{ij}a_{ij}]$ is controlled by $F$, i.e.,
\begin{equation*}\label{e2.15}
0<F_{ij}a_{ij}=\sum^{n}_{i=1}\frac{\partial F}{\partial \kappa_{i}}\kappa_{i}= F(\kappa_{1},\ldots,\kappa_{n}).
\end{equation*}
Thus $G_{p_i}$ is bounded.

Next, we will use the Lengendre transformation of $u$ which is the convex function $\tilde{u}$ on $\Tilde{\Omega}=Du(\Omega)$ define by \\
\begin{align*}
    \tilde{u}(y)=x\cdot Du(x)-u(x),
\end{align*}
and
\begin{align*}
    y=Du(x).
\end{align*}
It follows that
\begin{align*}
    \frac{\partial \tilde{u}}{\partial y_{i}}=x_{i},\quad\frac{\partial ^{2}\tilde{u}}{\partial y_{i}\partial y_{j}}=u^{ij}(x),
\end{align*}
where $[u^{ij}]=[D^2 u]^{-1}$. Then $\tilde{u}$ satisfies
\begin{align}\label{transfoemation G}
    \tilde{G}(y,D^2\tilde{u})=-G(y,[D^2\tilde{u}]^{-1})=-f(D\tilde{u},y)-c \qquad \text{in} \quad \tilde{\Omega},
\end{align}
with the boundary condition
\begin{align*}
    D\tilde{u}(\tilde{\Omega})=\Omega.
\end{align*}
Moreover, $(\ref{transfoemation G})$ can be written as
\begin{align*}
    \tilde{F}[\tilde{a}_{ij}]=-f(D\tilde{u},y),
\end{align*}
where
\begin{align*}
    \tilde{F}[\tilde{a}_{ij}]=\tilde{F}(\eta_1,\eta_2,\cdots,\eta_n)=-F(\eta_1^{-1},\eta_2^{-1},\cdots,\eta_n^{-1}).
\end{align*}

Here $\tilde{F}$ satisfies the structural conditions of Lemma \ref{e3.2.2}, $\eta_1,\eta_2,\cdots,\eta_n$ are the eigenvalues of the matrix [$\Tilde{a}_{ij}$] and
\begin{align*}
    \tilde{v}=\sqrt{1-|y|^2},
\end{align*}
\begin{align*}
    [\tilde{a}_{ij}]=\sqrt{1-|y|^2}\tilde{b}_{ik}D_{kl}\tilde{u}\tilde{b}_{lj},
\end{align*}
\begin{align*}
    \tilde{b}_{ij}=\delta_{ij}-\frac{y_iy_j}{1+\tilde{v}},
\end{align*}
where $[\tilde{b}^{ij}]$ is denoted the inverse matrix of $[\tilde{b}_{ij}]$ , it is given by
\begin{align*}
    \tilde{b}^{ij}=\delta_{ij}+\frac{y_iy_j}{\tilde{v}(1+\tilde{v})}.
\end{align*}

Since $y\in\tilde{\Omega}$, the eigenvalues of $[\tilde{b}_{ij}]$ and $[\tilde{b}^{ij}]$ are bounded between two controlled positive constants $\sigma_3,\sigma_4$. Consequently, we have
\begin{align}\label{e2.2.19}
\sigma_3\tilde{\mathcal{T}}\leq\tilde{\mathcal{{T}}}_{\tilde{G}}\leq \sigma_4\tilde{\mathcal{T}},
\end{align}
where $\tilde{\mathcal{T}}=\sum_{i=1}^n\tilde{F}_{ii}$, $\Tilde{\mathcal{T}}_{\tilde{G}}=\sum_{i=1}^n\Tilde{G}_{ii}$, we can conclude that
\begin{align*}
\tilde{G}_{ij}D_{ijk}\tilde{u}+\tilde{G}_{y_k}=-f_i\tilde{u}_{ik}-f_{p_k},
\end{align*}
where
\begin{align*}
    \tilde{G}_{ij}=\sqrt{1-|y|^2}\tilde{b}_{ik}\tilde{F}_{kl}\tilde{b}_{lj},
\end{align*}
and
\begin{align*}    
	\tilde{G}_{y_i}&=\tilde{F_{kl}}\frac{\partial}{\partial y_i}\left(\sqrt{1-|y|^2}\tilde{b}_{kp}\tilde{b}_{ql}\right)\tilde{u}_{pq}\\
&=-\frac{y_i}{1-|y|^2}\tilde{F}_{kl}\tilde{a}_{kl}+2\tilde{F}_{kl}\tilde{a}_{lm}\frac{\partial}{\partial y_i}(\tilde{b}_{kp})\cdot \tilde{b}^{pm}\\
 &=-\frac{y_i}{1-|y|^2}\tilde{F}_{kl}\tilde{a}_{kl}+\tilde{F}_{kl}\tilde{a}_{lm}\left(-\frac{\delta_{ik}y_p}{1+\tilde{v}}-\frac{y_{k}\delta_{ip}}{1+\tilde{v}}-\frac{y_ky_i}{(1+\tilde{v})^2\tilde{v}}\right)\cdot \left(\delta_{pm}+\frac{y_py_m}{\tilde{v}(1+\tilde{v})}\right)\\
 &=-\frac{y_i}{1-|y|^2}\tilde{F}_{kl}\tilde{a}_{kl}+\tilde{F}_{kl}\tilde{a}_{lm}c_{ikmp},
\end{align*}
$c_{ikmp}$ depending on $y,y_i,y_k,y_m,y_p$ and $c$ is a constant. We can similarly show that  $\tilde{G}_{y_i}$ is bounded. Therefore, there holds
  \begin{align}
 |\tilde{G}_{y_i}|\leq \Lambda_5,
  \end{align}
 where $\Lambda_5$ is a uniformly positive constant. 
 
Next, we can give the structural conditions for the operator $G$ and $\tilde{G}$.
 \begin{Corollary}\label{c3.3}
 	Assume that $\Omega$, $\tilde{\Omega}$ are bounded, uniformly convex domains with smooth boundary in $\mathbb{R}^{n}$ and  $\tilde{\Omega}\subset\subset B_{1}(0)$. Suppose that Suppose $f$ satisfy \eqref{oscf} and  $u \in C^2(\overline{\Omega})$ is the strictly convex solution to \eqref{e1.1.3} and  \eqref{e1.1.4}.
 	There   exists  uniformly  positive constants  $\Lambda_6,\Lambda_7$,  depending  on $n,\Omega,\tilde{\Omega}$ and $f$,  such that there holds
 	\begin{equation}\label{e3.4}
 		\Lambda_6\leq \sum^{n}_{i=1}\frac{\partial G}{\partial \lambda_{i}}\leq \Lambda_7,
 	\end{equation}
 	\begin{equation}\label{e3.5}
 		\Lambda_6\leq \sum^{n}_{i=1}\frac{\partial G}{\partial \lambda_{i}}\lambda^{2}_{i}\leq \Lambda_7,
 	\end{equation}
 	where $\lambda_1,\cdots, \lambda_n$ are the eigenvalues of Hessian matrix $D^2 u$ at $x \in \Omega$.
 \end{Corollary}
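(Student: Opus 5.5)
The plan is to transfer the bounds of Lemma \ref{e3.2.2} from the principal curvatures $\kappa_i$ (eigenvalues of $\mathcal{A}=[a_{ij}]$) to the eigenvalues $\lambda_i$ of $D^2u$. The transfer uses the relation $D_{ij}u=v\,b_{ik}a_{kl}b_{lj}$ and the fact that $|Du|\le \max_{\overline{\tilde\Omega}}|y|<1$ by \eqref{e1.1.4} and $\tilde\Omega\subset\subset B_1(0)$. Thus $v$, $1/v$ and the eigenvalues of $[b^{ij}]$ and $[b_{ij}]$ all lie between two positive constants depending only on $\tilde\Omega$. We read $\partial G/\partial\lambda_i$ as the eigenvalues of the positive definite matrix $[G_{ij}]$ in \eqref{e2.2.14}, evaluated in a frame diagonalizing $D^2u$. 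The quantity $\sum_i \frac{\partial G}{\partial\lambda_i}\lambda_i^2$ is then $G_{ij}D_{ik}uD_{kj}u=\mathrm{tr}\big([G_{ij}](D^2u)^2\big)$, which is invariant under orthogonal changes of frame.

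\textbf{Step 1: the bound \eqref{e3.4}.} Since $F=\sum\kappa_i$, we have $F_{kl}=\delta_{kl}$. Therefore $G_{ij}=\frac1v b^{ik}b^{kj}=\frac1v g^{ij}$ and
\[
\sum_i\frac{\partial G}{\partial\lambda_i}=\mathcal{T}_G=\frac1v\,\mathrm{tr}[g^{ij}]=\frac1v\Big(n+\frac{|Du|^2}{v^2}\Big).
\]
This expression lies between $n$ and $\frac{1}{v_0}\big(n+\frac{1}{v_0^2}\big)$, where $v_0=\min_{\overline{\tilde\Omega}}\sqrt{1-|y|^2}$. Equivalently, \eqref{e2.14} together with \eqref{e3.1} gives the bound.

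\textbf{Step 2: the bound \eqref{e3.5}.} We write
\[
\mathrm{tr}\big(G(D^2u)^2\big)=\frac1v\,\mathrm{tr}\big(b^{-2}\,v b\mathcal{A}b\;v b\mathcal{A}b\big)=v\,\mathrm{tr}\big(b\,\mathcal{A}\,b^{2}\,\mathcal{A}\,b\big)\cdot(\text{after cyclic rearrangement}),
\]
more precisely $v\,\mathrm{tr}(\mathcal{A}\,g\,\mathcal{A})$ with $g=b^2=[g_{ij}]$. Since $g$ is positive definite with eigenvalues in $[v^2,1]$, we obtain
\[
v^3\,\mathrm{tr}(\mathcal{A}^2)\le \mathrm{tr}\big(G(D^2u)^2\big)\le v\,\mathrm{tr}(\mathcal{A}^2),
\]
and $\mathrm{tr}(\mathcal{A}^2)=\sum\kappa_i^2=\sum\frac{\partial F}{\partial\kappa_i}\kappa_i^2$. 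Lemma \ref{e3.2.2}, that is \eqref{e3.2}, together with \eqref{e2.2.8} from Lemma \ref{L2.1}, then bounds this quantity above and below. Taking $\Lambda_6$ as the minimum and $\Lambda_7$ as the maximum of the resulting constants finishes the proof. All constants depend on $n,\Omega,\tilde\Omega$ and $f$ through $\Lambda_1,\Lambda_2$.

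\textbf{Main obstacle.} The only delicate point is the meaning of $\partial G/\partial\lambda_i$, since $G$ is not a symmetric function of the $\lambda_i$ alone. I would fix the interpretation as the diagonal entries of $[G_{ij}]$ in an eigenframe of $D^2u$, and then check that the sandwich argument for the matrix $g$ in Step 2 is legitimate. This relies on the inequality $\mathrm{tr}(XgX)\ge \lambda_{\min}(g)\,\mathrm{tr}(X^2)$ for symmetric $X$, which holds because $\mathrm{tr}(XgX)=\mathrm{tr}(g^{1/2}X^2g^{1/2})$.
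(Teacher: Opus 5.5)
Your proposal is correct and follows the paper's implicit route: the paper gives no separate proof, and obtains the corollary by transferring Lemma \ref{e3.2.2} from the $\kappa_i$ to the $\lambda_i$ through $D^2u=v\,b\mathcal{A}b$, the trace comparison \eqref{e2.14}, and the uniform bounds on $v$ and $b$ coming from $\tilde\Omega\subset\subset B_1(0)$. Your identity $G_{ij}u_{ik}u_{kj}=v\,\mathrm{tr}(\mathcal{A}g\mathcal{A})$ supplies the second-moment step for \eqref{e3.5} that the paper leaves unstated; only fix the intermediate expression $v\,\mathrm{tr}(b\mathcal{A}b^{2}\mathcal{A}b)$, which should read $v\,\mathrm{tr}(b^{-1}\mathcal{A}b^{2}\mathcal{A}b)$, and use the "diagonal entries of $[G_{ij}]$ in an eigenframe of $D^2u$" interpretation consistently rather than "eigenvalues of $[G_{ij}]$".
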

 Gathering the results obtained above we arrive at the following structural conditions for the operator $\tilde{G}$.
 \begin{Corollary}\label{c3.4}
 	Assume that $\Omega$, $\tilde{\Omega}$ are bounded, uniformly convex domains with smooth boundary in $\mathbb{R}^{n}$
 	and  $\tilde{\Omega}\subset\subset B_{1}(0)$. Suppose that Suppose $f$ satisfy \eqref{oscf} and  $u \in C^2(\overline{\Omega})$ is a strictly convex solution of \eqref{e1.1.3} and  \eqref{e1.1.4}.
 	Then there exists uniformly  positive constants  $\Lambda_8,\Lambda_9$ which depending on $n,\Omega,\tilde{\Omega}$ and $f$, such that there holds
 	\begin{equation}\label{e3..3.3}
 		\Lambda_8\leq \sum^{n}_{i=1}\frac{\partial \tilde{G}}{\partial \mu_{i}}\leq \Lambda_9,
 	\end{equation}
 	\begin{equation}\label{e3..3.4}
 		\Lambda_8\leq \sum^{n}_{i=1}\frac{\partial \tilde{G}}{\partial \mu_{i}}\mu^{2}_{i}\leq \Lambda_9,
 	\end{equation}
 	where $\mu_1,\cdots, \mu_n$ are the eigenvalues of the Hessian matrix $D^2 \tilde{u}$ at $x \in \Omega$.
 \end{Corollary}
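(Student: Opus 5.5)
The plan is to transfer the bounds for $F$ to bounds for $\tilde G$ through the Legendre transform. Throughout, I interpret $\frac{\partial \tilde G}{\partial \mu_i}$ as the eigenvalues of $[\tilde G_{ij}]$, computed in coordinates where $D^2\tilde u$ is diagonal, as is done for $G$ in Corollary \ref{c3.3}.

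\emph{Step 1: two-sided bounds on the principal curvatures.} At a point, the eigenvalues $\eta_i$ of $[\tilde a_{ij}]$ are the reciprocals $\eta_i=\kappa_i^{-1}$ of the principal curvatures. This follows because $[\tilde a_{ij}]$ is, up to the conjugation $\tilde b\,\cdot\,\tilde b$ and the factor $\tilde v=v$, the inverse of $[a_{ij}]$. By Lemma \ref{L2.1} we have $\sum\kappa_i\le\Lambda_2$, and convexity gives $\kappa_i>0$, so each $\kappa_i\le\Lambda_2$. For a lower bound on each $\kappa_i$, I would use the identity from the proof of Lemma \ref{L2.1}:
\[
\prod\kappa_i=\frac{\det D^2u}{v^{n+2}}.
\]
On its own this only controls the product of the $\kappa_i$.

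\emph{Step 2: the main obstacle.} Controlling each $\kappa_i$ from below is genuinely a $C^2$-type statement for $\tilde u$. I therefore expect this corollary to be meant for strictly convex $C^2(\overline\Omega)$ solutions with constants depending on the solution only through quantities that will later be estimated. If instead the corollary is purely formal, in the spirit of Lemma \ref{e3.2.2}, I would argue by the chain rule as follows. Since
\[
\tilde F(\eta)=-F(\eta^{-1})=-\sum\eta_i^{-1},
\]
we get
\[
\frac{\partial\tilde F}{\partial\eta_i}=\eta_i^{-2}=\kappa_i^2
\qquad\text{and}\qquad
\sum\frac{\partial\tilde F}{\partial\eta_i}\eta_i^2=n .
\]
The second sum is then trivially bounded above and below. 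The first sum is $\sum\kappa_i^2$, which lies between $\frac1n\Lambda_1^2$ and $\Lambda_2^2$ by the Cauchy inequality and Lemma \ref{L2.1}, exactly as in Lemma \ref{e3.2.2}. This is the route I would commit to, since it mirrors the proof of Lemma \ref{e3.2.2} and avoids any individual eigenvalue bounds.

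\emph{Step 3: from $\tilde F$ to $\tilde G$.} I would use
\[
\tilde G_{ij}=\tilde v\,\tilde b_{ik}\tilde F_{kl}\tilde b_{lj}.
\]
Here $\tilde v\ge\sqrt{1-\max_{\overline{\tilde\Omega}}|y|^2}>0$ because $\tilde\Omega\subset\subset B_1(0)$, and the eigenvalues of $\tilde b$ lie in $[\sigma_3,\sigma_4]$. As in \eqref{e2.2.19}, the trace $\sum\partial\tilde G/\partial\mu_i=\mathrm{tr}\,\tilde G_{ij}$ is comparable to $\tilde{\mathcal T}=\sum\partial\tilde F/\partial\eta_i$, which gives \eqref{e3..3.3}.

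For \eqref{e3..3.4}, write
\[
\sum\frac{\partial\tilde G}{\partial\mu_i}\mu_i^2=\tilde G_{ij}\tilde u_{ik}\tilde u_{kj}.
\]
Substituting $D^2\tilde u=\tilde v^{-1}\tilde b^{-1}\tilde a\,\tilde b^{-1}$ turns this into
\[
\tilde v^{-1}\,\mathrm{tr}\bigl(\tilde F\,\tilde a\,(\tilde b^{-1})^2\,\tilde a\bigr).
\]
Since $(\tilde b^{-1})^2$ is a positive matrix with eigenvalues in $[\sigma_4^{-2},\sigma_3^{-2}]$, this trace is comparable to $\mathrm{tr}(\tilde F\tilde a^2)=\sum\frac{\partial\tilde F}{\partial\eta_i}\eta_i^2=n$, which gives \eqref{e3..3.4}.

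The only delicate point is this last comparison of traces of products of positive matrices. It holds because $\tilde F\tilde a^2$ and $\tilde a\tilde F\tilde a$ are simultaneously diagonalisable: $[\tilde F_{ij}]$ is diagonal when $\tilde a$ is, as recalled from \cite{JS} for $F$. Hence $\mathrm{tr}(\tilde a\tilde F\tilde a\,P)$ lies between the extreme eigenvalues of $P=(\tilde b^{-1})^2$ times $\mathrm{tr}(\tilde F\tilde a^2)$. All constants depend only on $n$, $\Omega$, $\tilde\Omega$ and $f$, through $\Lambda_1$, $\Lambda_2$ and the bounds $\sigma_3,\sigma_4$.
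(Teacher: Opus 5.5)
Your committed route is correct and is essentially the paper's own argument. The paper asserts that $\tilde F(\eta)=-F(\eta^{-1})$ satisfies the structural conditions of Lemma \ref{e3.2.2}; your chain-rule computation ($\partial\tilde F/\partial\eta_i=\kappa_i^2$ and $\sum\frac{\partial\tilde F}{\partial\eta_i}\eta_i^2=n$, together with Lemma \ref{L2.1}) verifies exactly this, and the paper then transfers the bounds to $\tilde G$ through the bounds on $\tilde v$ and $\tilde b$, as in \eqref{e2.2.19}. Your explicit trace computation for \eqref{e3..3.4} fills in a step the paper leaves implicit (in fact $\tilde a\tilde F\tilde a=I$, so that sum equals $\tilde v^{-1}\mathrm{tr}((\tilde b^{-1})^2)$), and the concern in your Step 2 about individual lower bounds on the $\kappa_i$ never arises.
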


\section{The strict obliqueness estimate}\label{sec3}
In this section, 
We will carry out the strict obliqueness estimates.

From \cite{Ju1}, we give the following
\begin{deff}
A smooth function $h:\mathbb{R}^n\rightarrow\mathbb{R}$ is called the defining function of $\tilde{\Omega}$,
 if
$$\tilde{\Omega}=\{p\in\mathbb{R}^{n} : h(p)>0\},\quad |Dh|_{{\partial\tilde{\Omega}}}=1,$$
and there exists $\theta>0$ such that for any $p=(p_{1},\cdots, p_{n})\in \tilde{\Omega}$ and $\xi=(\xi_{1}, \cdots, \xi_{n})\in \mathbb{R}^{n}$,
$$\frac{\partial^{2}h}{\partial p_{i}\partial p_{j}}\xi_{i}\xi_{j}\leq -\theta|\xi|^{2}.$$
\end{deff}
Therefore, the diffeomorphism condition $Du(\Omega) =\tilde\Omega$ in (\ref{e1.1.4}) is equivalent to
\begin{align}
    h(Du)=0,\quad x\in\partial\Omega.
\end{align}

And then the mean curvature equation (\ref{e1.1.3})-(\ref{e1.1.4}) is equivalent to the following elliptic problem
\begin{equation}\label{e3.3.4}
\begin{cases}
G(Du,D^2u)=f(x,Du)+c, \quad x\in \Omega,\\
\qquad\;\;  h(Du)=0,\quad x\in \partial\Omega.
 \end{cases}
\end{equation}
\begin{lemma}$($See Lemma 3.4 in \cite{RS}$)$\label{l3.30}
Assume that  $[A_{ij}]$ is semi-positive real symmetric matrix and $[B_{ij}]$, $[C_{ij}]$ are two real symmetric matrices. Then
$$2A_{ij}B_{jk}C_{ki}\leq A_{ij}B_{ik}B_{jk}+A_{ij}C_{ik}C_{jk}.$$
\end{lemma}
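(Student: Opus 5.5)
We prove the inequality $2A_{ij}B_{jk}C_{ki}\leq A_{ij}B_{ik}B_{jk}+A_{ij}C_{ik}C_{jk}$ by writing both sides as traces and completing a square. With all matrices symmetric, $A_{ij}B_{jk}C_{ki}=\mathrm{tr}(ABC)$. The term $A_{ij}B_{ik}B_{jk}=A_{ij}(B^2)_{ij}$ equals $\mathrm{tr}(AB^2)$, and likewise $A_{ij}C_{ik}C_{jk}=\mathrm{tr}(AC^2)$. The claim is therefore
\[
2\,\mathrm{tr}(ABC)\le \mathrm{tr}(AB^2)+\mathrm{tr}(AC^2).
\]

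The key step is to consider $\mathrm{tr}\big(A(B-C)(B-C)\big)$. Since $A\ge 0$ and $(B-C)^2=(B-C)^T(B-C)\ge 0$, this trace is nonnegative. One way to see this is $\mathrm{tr}(AM^TM)=\mathrm{tr}(MAM^T)\ge 0$ for $M=B-C$, because $MAM^T$ is positive semidefinite. Expanding gives
\[
0\le \mathrm{tr}(AB^2)-\mathrm{tr}(ABC)-\mathrm{tr}(ACB)+\mathrm{tr}(AC^2).
\]

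It remains to check that $\mathrm{tr}(ACB)=\mathrm{tr}(ABC)$. By transposition invariance and symmetry, $\mathrm{tr}(ACB)=\mathrm{tr}\big((ACB)^T\big)=\mathrm{tr}(BCA)=\mathrm{tr}(ABC)$ by cyclicity. Substituting this into the expansion yields the claim. In index form the same identity reads $A_{ij}C_{jk}B_{ki}=A_{ji}B_{ik}C_{kj}$, obtained by relabeling the indices and using symmetry.

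There is no genuine obstacle. The only points needing care are the index bookkeeping that identifies each sum with the correct trace, and the fact that semidefiniteness of $A$ alone suffices for the nonnegativity of $\mathrm{tr}(AM^TM)$.
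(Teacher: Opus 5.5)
Your proof is correct and complete. The paper gives no argument for this lemma; it only cites Lemma 3.4 of \cite{RS}, so the useful comparison is with the standard proof of this fact.

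Your steps all check out:
\begin{itemize}
\item the identifications $A_{ij}B_{jk}C_{ki}=\mathrm{tr}(ABC)$ and $A_{ij}B_{ik}B_{jk}=\mathrm{tr}(AB^2)$;
\item the nonnegativity $\mathrm{tr}(A(B-C)^2)=\mathrm{tr}((B-C)A(B-C))\ge 0$;
\item the identity $\mathrm{tr}(ACB)=\mathrm{tr}(ABC)$.
\end{itemize}

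The standard proof diagonalizes $A=P^{T}\operatorname{diag}(\lambda_1,\dots,\lambda_n)P$ with $\lambda_i\ge 0$. It then sets $\tilde B=PBP^{T}$ and $\tilde C=PCP^{T}$, and applies $2ab\le a^2+b^2$ termwise to get $2\sum_{i,k}\lambda_i\tilde B_{ik}\tilde C_{ik}\le\sum_{i,k}\lambda_i(\tilde B_{ik}^2+\tilde C_{ik}^2)$.

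That is exactly the coordinate form of your completion of the square, since $\mathrm{tr}(A(B-C)^2)=\sum_{i,k}\lambda_i(\tilde B_{ik}-\tilde C_{ik})^2$. The two routes are therefore essentially the same. Yours is basis-free, so you never need to check that each of the three sums is invariant under simultaneous orthogonal conjugation. The diagonal version makes the role of the eigenvalues $\lambda_i\ge 0$ more visible.
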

For the convenience, we denote $\beta=(\beta^{1}, \cdots, \beta^{n})$ with $\beta^{i}:=h_{p_{i}}(Du)$, and $\nu=(\nu_{1},\cdots,\nu_{n})$ as the unit inward normal vector at $x\in\partial\Omega$. The expression of the inner product is
\begin{equation*}
	\langle\beta, \nu\rangle=\beta^{i}\nu_{i}.
\end{equation*}

According to the proof in \cite{Ju2}, we can verify the oblique boundary condition.
\begin{lemma}\label{lem3.5}$($See Lemma 3.1 in \cite{OC}$)$
    If $u$ is a  smooth uniformly convex solution of $\eqref{e1.1.3}$ and $\eqref{e1.1.4}$ , and then the boundary condition is oblique, i.e.,
    \begin{align}\label{oblique equation}
        \langle\beta, \nu\rangle \geq0,\quad x\in\partial \Omega,
    \end{align}
\end{lemma}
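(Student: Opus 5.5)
The plan is the standard argument of Urbas \cite{Ju2}: since $\beta = Dh(Du)$ is the inner normal of $\tilde\Omega$ at $Du(x)$, I will show that it makes a nonnegative angle with the inner normal $\nu$ of $\Omega$ at $x$. Everything rests on two facts, $h(Du)=0$ on $\partial\Omega$ and the convexity of $u$.

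\textbf{Step 1: $\beta$ is normal to $\partial\Omega$.} Fix $x_0\in\partial\Omega$. Because $Du$ maps $\partial\Omega$ onto $\partial\tilde\Omega$, the function $h(Du)$ vanishes identically on $\partial\Omega$. Differentiating it along any tangent vector $\tau$ to $\partial\Omega$ at $x_0$ gives
\[
h_{p_k}(Du)\,u_{k i}\,\tau_i = 0 .
\]
So the vector $D^2u\,\beta$ is orthogonal to every tangent direction, which means
\[
D^2u\,\beta = \mu\,\nu \quad \text{at } x_0
\]
for some scalar $\mu$.

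\textbf{Step 2: identify the sign of $\mu$.} Inside $\Omega$ we have $Du\in\tilde\Omega$, so $h(Du)>0$ in $\Omega$ while $h(Du)=0$ on $\partial\Omega$. Hence the inner normal derivative satisfies $\partial_\nu h(Du)\ge 0$ at $x_0$. Since $D^2u$ is symmetric, this derivative equals
\[
h_{p_k}\,u_{ki}\,\nu_i = \langle D^2u\,\beta,\nu\rangle = \mu ,
\]
so $\mu\ge 0$.

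\textbf{Step 3: conclude.} Uniform convexity makes $D^2u$ positive definite, so its inverse $[u^{ij}]$ is positive definite as well. Then $\beta=\mu\,[u^{ij}]\nu$, and
\[
\langle\beta,\nu\rangle = \mu\, u^{ij}\nu_i\nu_j \ge 0 ,
\]
which is \eqref{oblique equation}.

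It is also worth noting that $\mu\neq0$. Indeed, $|Dh|=1$ on $\partial\tilde\Omega$ gives $\beta\ne0$, and $D^2u$ is invertible, so $\mu\nu=D^2u\,\beta\neq 0$. This yields the strict inequality $\langle\beta,\nu\rangle>0$ pointwise.

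\textbf{Main obstacle.} The only delicate point is Step 2: fixing the sign convention for $\nu$ and justifying the boundary sign of $\partial_\nu h(Du)$ requires $u\in C^2(\overline\Omega)$ up to the boundary. This is available because $u$ is assumed smooth. The quantitative, uniform-in-$x$ lower bound for $\langle\beta,\nu\rangle$ is not needed here; it belongs to the subsequent strict obliqueness estimate.
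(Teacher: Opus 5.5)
Your argument is correct. It is the standard Urbas--Schn\"urer proof that the paper relies on: the paper proves nothing here itself and only points to Lemma 3.1 of \cite{OC} and to \cite{Ju2}. Your intermediate fact, $D^2u\,\beta=\mu\nu$ with $\mu>0$, is also exactly what justifies the identity $\langle\beta,\nu\rangle=\sqrt{h_{p_k}h_{p_l}u_{kl}u^{ij}\nu_i\nu_j}$ that the paper uses later in the proof of Proposition \ref{llll3.4}.
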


Let $\omega=\langle \beta,\nu\rangle+\tau h(Du)$,
where $\tau$ is a positive constant to be determined.
Then we have following  lemma. 
\begin{lemma}\label{vnestimate}
	 Suppose $x_0\in \partial\Omega$  such that
	 $\omega$ reaches the minimum on $\partial\Omega$ at $x_0$.  If $f\in \mathscr{A}$ \eqref{oscf} 
	and $u$ is a strictly convex solution to \eqref{e3.3.4}, then
	\begin{equation}\label{vn}
		\frac{\partial \omega}{\partial \nu}(x_0)\geq-C_1,
	\end{equation}
	where $C_1$ is a constant depending only on $n, \Omega, \tilde{\Omega}$ and $f$.
\end{lemma}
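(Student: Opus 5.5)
Throughout, $\omega=\langle\beta,\nu\rangle+\tau h(Du)$ is smooth near $\partial\Omega$: extend the inward normal $\nu$ smoothly into a neighbourhood of $\partial\Omega$, for instance as $\nu=-D d$ with $d$ the distance to $\partial\Omega$. The idea is a barrier argument in a boundary strip, as in Urbas \cite{Ju2,J2} and Wang--Huang--Bao \cite{Wang2023}. We build an auxiliary function $\Phi=\omega-\omega(x_0)+A\rho$ on $\Omega_\mu:=\{x\in\Omega:\ \mathrm{dist}(x,\partial\Omega)<\mu\}$, where $\rho$ is a defining function of $\Omega$ that is uniformly concave and vanishes on $\partial\Omega$. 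We show $\Phi\ge0$ on $\Omega_\mu$. Since $\Phi(x_0)=0$, this gives $\partial_\nu\omega(x_0)\ge -A\,\partial_\nu\rho(x_0)\ge -C_1$.

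Key steps, in order.

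First, compute the linearized operator $L=G_{ij}D_{ij}+G_{p_i}D_i-f_{p_i}D_i$ applied to $h(Du)$ and to $\beta^k\nu_k=h_{p_k}(Du)\nu_k(x)$. Differentiating \eqref{e3.3.4} gives $G_{ij}u_{ijk}+G_{p_i}u_{ik}=f_k+f_{p_i}u_{ik}$. Using this:
\begin{align*}
L h(Du)&=h_{p_k}f_k+G_{ij}h_{p_kp_l}u_{ik}u_{jl},\\
L(h_{p_k}\nu_k)&=\nu_k h_{p_kp_l}f_l+G_{ij}\big(h_{p_kp_lp_m}\nu_k u_{il}u_{jm}+2h_{p_kp_l}u_{il}D_j\nu_k\big)+h_{p_k}G_{ij}D_{ij}\nu_k+\text{l.o.t.}
\end{align*}
Bound the quadratic terms with the concavity of $h$, $h_{p_kp_l}\le-\theta\delta_{kl}$, and with Corollary \ref{c3.3}. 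Since $G_{ij}u_{ik}u_{jk}$ is comparable to $\sum\partial G/\partial\lambda_i\,\lambda_i^2\ge\Lambda_6$, we get $G_{ij}h_{p_kp_l}u_{ik}u_{jl}\le-\theta\Lambda_6$. The cross term $G_{ij}u_{il}D_j\nu_k h_{p_kp_l}$ is absorbed by Lemma \ref{l3.30}, the trace bound $\mathcal T_G\le C$, and Cauchy--Schwarz. The $f$-terms are controlled by the smallness of $|D_xf|$ and $|D_{xp}f|$. The conclusion is
\[
L\omega\le C\big(1+G_{ij}u_{ik}u_{jk}\big)-\tau\theta\,G_{ij}u_{ik}u_{jk},
\]
so choosing $\tau$ large gives $L\omega\le C$ in $\Omega_\mu$.

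Second, handle $\rho$. Since $\rho$ is uniformly concave and $\sum G_{ii}\ge\sigma_1\Lambda_6$, we have $L\rho\le-\sigma_1\Lambda_6\delta+C|D\rho|\le -c_0<0$. Hence, for $A$ large, $L\Phi\le0$ in $\Omega_\mu$.

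Third, check the boundary values. On $\partial\Omega$, $\Phi=\omega-\omega(x_0)\ge0$ because $x_0$ is a minimum point. On $\{\mathrm{dist}=\mu\}$, $\rho\ge c\mu$, while $\omega$ is bounded by the gradient bound $Du\in\overline{\tilde\Omega}$. So $A$ large also makes $\Phi\ge0$ there. The maximum principle then gives $\Phi\ge0$ in $\Omega_\mu$, and the conclusion follows.

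The main obstacle is the first step: showing that the bad third-derivative and $D^2u$-linear terms in $L\omega$ are dominated by $\tau\theta G_{ij}u_{ik}u_{jk}$. If the linear-in-$D^2u$ cross terms resist Cauchy--Schwarz, I would instead use the trick of \cite{Ju2}. Replace $\nu$ by the extension $\nu=D\rho/|D\rho|$ and exploit the identity $G_{ij}u_{ik}=\text{(something bounded)}\cdot\delta$-type via \eqref{e2.2.14}. This identity gives $|G_{ij}u_{il}D_j\nu_k|\le C\,\mathcal T_G^{1/2}(G_{ij}u_{ik}u_{jk})^{1/2}$, which closes the absorption.
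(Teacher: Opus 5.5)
Your first step is sound and matches the paper. Using $\mathcal{L}u_l=f_l$, Cauchy--Schwarz (Lemma~\ref{l3.30}) on the cross term $2G_{ij}h_{p_kp_l}u_{li}D_j\nu_k$, and $\tau$ large, you get $\mathcal{L}\omega\le C\,\mathcal{T}_G$. Two side remarks: the $f$-terms here are merely bounded, so no smallness of $D_xf$ is needed; and the drift $G_{p_i}$, which is linear in $D^2u$, is bounded because $[a_{ij}]\ge0$ with trace $F\le\Lambda_2$.

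The gap is in your second step. For a defining function $\rho$ with $D^2\rho\le-\delta I$ you only have
\[
\mathcal{L}\rho\le-\delta\,\mathcal{T}_G+C\,|D\rho|,
\]
where $C$ bounds $|G_{p_i}-f_{p_i}|$, and $|D\rho|$ is bounded away from zero near $\partial\Omega$. The constants $\delta$ (fixed by $\Omega$) and $C$ (fixed by the data; $f_p$ is not assumed small) are unrelated. Multiplying $\rho$ by a constant rescales both terms by the same factor. So $\mathcal{L}\rho\le-c_0<0$ does not follow, and without it taking $A$ large cannot give $\mathcal{L}\Phi\le0$. The real obstacle in this lemma is this step, not the one you flagged.

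The missing idea is a nonlinear reparametrization of the defining function so that its concavity beats the first-order drift. This is exactly what the paper's $\sigma\ln(1+k\tilde h)$ does. With $\eta=kD\tilde h/(1+k\tilde h)$,
\[
\mathcal{L}\ln(1+k\tilde h)=\frac{k\,G_{ij}\tilde h_{ij}}{1+k\tilde h}-G_{ij}\eta_i\eta_j+(G_{p_i}-f_{p_i})\eta_i\le\Big(-\frac{k\tilde\theta}{1+k\tilde h}+C_5\Big)\mathcal{T}_G,
\]
because the negative quadratic term in $\eta$ absorbs the linear drift term uniformly in $k$. Restricting to a region where $k\tilde h$ stays bounded (the paper works in $\Omega_r$ with $r$ small and $k=7C_5/\tilde\theta$) then gives $\mathcal{L}\ln(1+k\tilde h)\le-C_5\mathcal{T}_G$.

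In your strip formulation you can get the same effect with $\rho-K\rho^2$ on a thin strip where $K\rho\le 1/4$:
\[
\mathcal{L}(\rho-K\rho^2)=(1-2K\rho)\mathcal{L}\rho-2K\,G_{ij}\rho_i\rho_j\le C-cK,
\]
so $K$ large makes it uniformly negative. This modified barrier is still $\ge\rho/2\ge c\mu$ on the inner boundary of the strip. With it, the rest of your argument (boundary comparison with $A$ large, maximum principle, and $\Phi(x_0)=0$) goes through. Your global strip even removes the need for the paper's localizing term $A|x-x_0|^2$.
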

\begin{proof}
	By rotation, we may assume that $x_0=0$ and $\nu(x_0)=(0,1)=:e_n$. Denote a neighborhood of $x_0$ in $\Omega$ by
	$$\Omega_{r}:=\Omega\cap B_{r}(x_{0}),$$
	where $0<r<1$ is a positive constant such that $\nu$ is well defined in $\Omega_{r}$. In order to obtain the desired results, it suffices to consider the auxiliary function
	$$\Phi(x)=\omega(x)-\omega(x_{0})+\sigma\ln(1+k\tilde{h}(x))+A|x-x_{0}|^{2},$$
	where $\sigma$, $k$ and $A$ are positive constants to be determined.

	At first, we estimate $\mathcal{L} \omega$, where $\mathcal{L}=G_{ij}\partial_{ij}+(G_{p_i}-f_{p_i})\partial_{i}$,  by Corollary \ref{c3.3}, Lemma \ref{l3.30} and $\mathcal{L}u_k=f_k$, we have
	\begin{equation*}\label{e3.10}
		\begin{aligned}
			\mathcal{L}\omega=&G_{ij}u_{il}u_{jm}(h_{p_{k}p_{l}p_{m}}\nu_{k}+\tau h_{p_{l}p_{m}})+2G_{ij}h_{p_{k}p_{l}}u_{li}\nu_{kj}\\
			&+h_{p_{k}}\mathcal{L}\nu_{k}+\tau h_{p_{k}}\mathcal{L}u_k+h_{p_{k}p_{l}}\nu_{k}\mathcal{L}u_l\\
			\leq& (h_{p_{k}p_{l}p_{m}}\nu_{k}+\tau h_{p_{l}p_{m}}+\delta_{lm})G_{ij}u_{il}u_{jm}+C_{2}\mathcal{T}_{G}+C_{3},
		\end{aligned}
	\end{equation*}
	 and
	$$2G_{ij}h_{p_{k}p_{l}}u_{li}\nu_{kj}\leq  G_{ij}u_{im}u_{mj}+C_{2}\mathcal{T}_{G}.$$
	
	Since $D^{2}h\leq-\theta I$, we may choose $\tau$ large enough depending on the known data such that
	$$(h_{p_{k}p_{l}p_{m}}\nu_{k}+\tau h_{p_{l}p_{m}}+\delta_{lm})<0.$$
	
	Consequently, we deduce that
	\begin{equation}\label{e3.11}
		\mathcal{L}\omega\leq C_{4}\mathcal{T}_{G} \    \ in \   \ \Omega,
	\end{equation}
	by the convexity of $u$.

	By noting that $\tilde{h}$ is the defining function of $\Omega$, $G_{ij}$ has positive lower bound and $G_{p_i}-f_{p_i}$ is bounded, we show that
	\begin{equation}\label{e3.12}
		\begin{aligned}
			\mathcal{L}(\ln(1+k\tilde{h}))&=G_{ij}\left(\frac{k\tilde{h}_{ij}}{1+k\tilde{h}}
			-\frac{k\tilde{h}_{i}}{1+k\tilde{h}}\frac{k\tilde{h}_{j}}{1+k\tilde{h}}\right)
			+(G_{p_i}-f_{p_i})\frac{k\tilde{h}_{i}}{1+k\tilde{h}}\\
			&\triangleq G_{ij}\frac{k\tilde{h}_{ij}}{1+k\tilde{h}}-G_{ij}\eta_{i}\eta_{j}+(G_{p_i}-f_{p_i})\eta_{i}\\
			&\leq\left(-\frac{k\tilde{\theta}}{1+k\tilde{h}}+C_{5}-C_{6}|\eta-C_{7}I|^{2}\right)\mathcal{T}_{G}\\
			&\leq\left(-\frac{k\tilde{\theta}}{1+k\tilde{h}}+C_{5}\right)\mathcal{T}_{G},
		\end{aligned}
	\end{equation}
	where $\eta=\left(\frac{k\tilde{h}_{1}}{1+k\tilde{h}}, \frac{k\tilde{h}_{2}}{1+k\tilde{h}},\cdots, \frac{k\bar{h}_{n}}{1+k\tilde{h}}\right)$.
	
	By taking $r$ to be small enough, we have
	\begin{equation}\label{e3.13}
		\begin{aligned}
			0\leq \tilde{h}(x)&=\tilde{h}(x)-\tilde{h}(x_{0})\\
			&\leq \sup_{\Omega_{r}}|D\tilde{h}||x-x_{0}|\\
			&\leq r\sup_{\Omega}|D\tilde{h}|\leq \frac{\tilde{\theta}}{3C_{5}}.
		\end{aligned}
	\end{equation}
	
	By choosing $k=\frac{7C_{5}}{\tilde{\theta}}$ and applying (\ref{e3.13}) to (\ref{e3.12}), we obtain
	\begin{equation}\label{e3.14}
		\mathcal{L}(\ln(1+k\tilde{h}))
		\leq -C_{5}\mathcal{T}_{G}.
	\end{equation}
	
	Combining (\ref{e3.11}) with (\ref{e3.14}), a direct computation yields
	\begin{equation*}
		\mathcal{L}(\Phi(x))\leq (C_{4}-\sigma C_{5}+2A+C_{6})\mathcal{T}_{G},
	\end{equation*}
  where $C_{6}=\frac{2A}{\Lambda_6}\max_{\overline{\Omega}\times \overline{\tilde{\Omega}}}\left(|G_{p_i}|+|f_{p_i}|\right)$.	By taking
	$$\sigma=\frac{C_{4}+2A+C_{6}}{C_{5}},$$
	
	\begin{equation}\label{e3.15}
		\mathcal{L}\Phi\leq 0,\,\,  x\in\Omega_{r}.
	\end{equation}
	
	It is clear that $\Phi(x)\geq0$ on $\partial\Omega$.
	Because $\omega$ is bounded, then it follows that we can choose $A$ large enough depending on the known data such that on $\Omega\cap\partial B_{r}(x_{0})$,
	\begin{equation*}
		\begin{aligned}
			\Phi(x)&=\omega(x)-\omega(x_0)+\sigma\ln(1+k\tilde{h}(x))+Ar^{2}\\
			&\geq\omega(x)-\omega (x_0)+Ar^{2}\geq 0.
		\end{aligned}
	\end{equation*}
	The above argument implies that
	\begin{equation}\label{e3.15aaa}
		\Phi(x)\geq 0, \,\, \quad \text{for} \quad x \in \partial\Omega_{r}.
	\end{equation}
	
	According to the maximum principle, we get that
	\begin{equation}\label{e3.15aaaa}
		\Phi(x)|_{\Omega_{r}}\geq 0.
	\end{equation}
	By the above inequality and $\Phi(x_0)=0$, we have $\partial_n\Phi(x_0)\geq 0$, which gives the desired estimate \eqref{vn}.
\end{proof}

The unit inward normal vector of $\partial\Omega$ can be expressed by $\nu=D\tilde{h}$. For the same reason, $\tilde{\nu}=Dh$, where $\tilde{\nu}=(\tilde{\nu}_{1}, \tilde{\nu}_{2},\cdots,\tilde{\nu}_{n})$ is the unit inward normal vector of $\partial\tilde{\Omega}$.  $\tilde{h}$ is the defining function of $\Omega$ . That is,
$$\Omega=\{\tilde{p}\in\mathbb{R}^{n} : \tilde{h}(\tilde{p})>0\},\ \ \ |D\tilde{h}|_{\partial\Omega}=1, \ \ \ D^2\tilde{h}\leq -\tilde{\theta}I,$$
where $\tilde{\theta}$ is some positive constant.

By using \eqref{transfoemation G} and noting that $\tilde{h}$ is the defining function of $\Omega$, then we know $\tilde{u}$ satisfies
\begin{align}\label{e2.17}
	\begin{cases}
		\tilde{G}(y,D^{2}\tilde{u})=-f(D\tilde{u},\tilde{x})-c,\quad\tilde{x}\in \tilde{\Omega}, \\
		\qquad\tilde{h}(D\tilde{u})=0,\quad \tilde{x}\in\partial\tilde{\Omega},\\
	\end{cases}
\end{align}
where $\tilde{G}(y,D^{2}\tilde{u})=-G(y,D^{2}\tilde{u}^{-1})$.

Let \begin{align*}
	\tilde{\beta}=(\tilde{\beta}^{1}, \cdots, \tilde{\beta}^{n}),\quad \tilde{\beta}^{k}:=\tilde{h}_{\tilde{p}_{k}}(D\tilde{u}),
\end{align*}
using the representation as the works of \cite{HRY} and \cite{OK},
we also define
$$\tilde{\omega}=\langle\tilde{\beta}, \tilde{\nu}\rangle+\tilde{\tau} \tilde{h}(D\tilde{u}),$$
in which
$$\langle\tilde{\beta}, \tilde{\nu}\rangle=\langle\beta, \nu\rangle,$$
and $\tilde{\tau}$ is a  positive constant to be determined. 

Then we can get 

\begin{lemma}\label{tildevnestimate}
	 suppose $\tilde{\omega}$ reaches the minimum on $\partial\tilde{\Omega}$ at $y_0$.  Let $f\in \mathscr{A}$, satisfying  \eqref{oscf} and
	\begin{equation}\label{Dxf}
		|D_{x}f|\leq \frac{\theta\Lambda_6}{2\max_{\overline{\tilde{\Omega}}}|Dh|}.
	\end{equation}
	If $\tilde{u}$ is a strictly convex solution to \eqref{e2.17}, then
	\begin{equation}\label{tildevn}
		\frac{\partial \tilde{\omega}}{\partial \tilde{\nu}}(y_0)\geq-C_8,
	\end{equation}
	where $C_8$ is a constant depending only on $n, \Omega, \tilde{\Omega}$ and $f$.
\end{lemma}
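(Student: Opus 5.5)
We mirror the proof of Lemma \ref{vnestimate}, now on the Legendre side. By a rotation we take $y_0=0$ and $\tilde\nu(y_0)=e_n$, and work in $\tilde\Omega_r:=\tilde\Omega\cap B_r(y_0)$. The linearized operator for \eqref{e2.17} is
\[
\tilde{\mathcal L}=\tilde G_{ij}\partial_{ij}+\bigl(\tilde G_{y_i}+f_{x_k}\delta_{ki}\bigr)\partial_i ,
\]
where the $f$-term comes from differentiating $-f(D\tilde u,y)$ in the $\tilde u_k$ slot. Differentiating the equation gives
\[
\tilde{\mathcal L}\tilde u_k=-f_{p_k}-f_{x_i}\tilde u_{ik}+f_{x_i}\tilde u_{ik}=-f_{p_k},
\]
up to the precise bookkeeping. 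We use the auxiliary function
\[
\tilde\Phi=\tilde\omega-\tilde\omega(y_0)+\tilde\sigma\ln(1+kh(y))+\tilde A|y-y_0|^2 ,
\]
with $h$ the defining function of $\tilde\Omega$. The goal is $\tilde{\mathcal L}\tilde\Phi\le0$ in $\tilde\Omega_r$ and $\tilde\Phi\ge0$ on $\partial\tilde\Omega_r$. The maximum principle then gives $\partial_{\tilde\nu}\tilde\Phi(y_0)\ge0$, which is \eqref{tildevn}.

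\textbf{Step 1: computing $\tilde{\mathcal L}\tilde\omega$.} Write
\[
\tilde\omega=\tilde h_{\tilde p_k}(D\tilde u)\,\tilde\nu_k(y)+\tilde\tau\,\tilde h(D\tilde u),
\]
with $\tilde\nu=Dh$. Expanding as in Lemma \ref{vnestimate}, the highest-order part is
\[
\bigl(\tilde h_{\tilde p_k\tilde p_l\tilde p_m}\tilde\nu_k+\tilde\tau\,\tilde h_{\tilde p_l\tilde p_m}\bigr)\tilde G_{ij}\tilde u_{il}\tilde u_{jm}.
\]
The cross term $2\tilde G_{ij}\tilde h_{\tilde p_k\tilde p_l}\tilde u_{li}\tilde\nu_{kj}$ is absorbed by Lemma \ref{l3.30} into $\tilde G_{ij}\tilde u_{im}\tilde u_{mj}+C\tilde{\mathcal T}_{\tilde G}$. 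Since $D^2\tilde h\le-\tilde\theta I$, choosing $\tilde\tau$ large makes the quadratic form negative, so it can be dropped because $\tilde u$ is convex. The remaining terms $h_{\cdot}\tilde{\mathcal L}\tilde\nu_k$ and $\tilde{\mathcal L}\tilde u_k=-f_{p_k}$ are bounded by $C\tilde{\mathcal T}_{\tilde G}+C$, using $|\tilde G_{y_i}|\le\Lambda_5$, \eqref{e2.19} and Corollary \ref{c3.4}. The constant is converted into $C\tilde{\mathcal T}_{\tilde G}$ via the lower bound $\tilde{\mathcal T}_{\tilde G}\ge\sigma_3\Lambda_8$. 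This yields
\[
\tilde{\mathcal L}\tilde\omega\le C_9\,\tilde{\mathcal T}_{\tilde G}.
\]

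\textbf{Step 2: the barrier term, the main obstacle.} We compute
\[
\tilde{\mathcal L}\ln(1+kh)\le\Bigl(-\frac{k\theta}{1+kh}+C\Bigr)\tilde{\mathcal T}_{\tilde G}+\frac{k\,f_{x_i}h_i}{1+kh},
\]
after completing the square in $\eta=kDh/(1+kh)$ against the bounded first-order coefficient $\tilde G_{y_i}$. The drift $f_x\cdot\eta$ grows linearly in $k$, and it is not obviously absorbable, because $\tilde{\mathcal T}_{\tilde G}$ has only a fixed lower bound. This is where \eqref{Dxf} enters. By \eqref{Dxf} and $\tilde{\mathcal T}_{\tilde G}\ge\Lambda_6$ (after adjusting constants via the comparison of traces in \eqref{e2.14} and \eqref{e2.19}),
\[
\frac{k\,|f_x|\,|Dh|}{1+kh}\le\frac{k\theta}{2(1+kh)}\,\tilde{\mathcal T}_{\tilde G}.
\]
Hence half of the good term survives:
\[
\tilde{\mathcal L}\ln(1+kh)\le\Bigl(-\frac{k\theta}{2(1+kh)}+C\Bigr)\tilde{\mathcal T}_{\tilde G}.
\]
Then take $r$ small so that $h\le\theta/(6C)$ on $\tilde\Omega_r$ and choose $k\sim C/\theta$, which gives $\tilde{\mathcal L}\ln(1+kh)\le-C\,\tilde{\mathcal T}_{\tilde G}$. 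I expect this absorption to be the delicate point, since the normalization constant in \eqref{Dxf} must match the trace lower bound exactly.

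\textbf{Step 3: closing.} The quadratic term satisfies
\[
\tilde{\mathcal L}\bigl(\tilde A|y|^2\bigr)\le\bigl(2\tilde A+C\tilde A\bigr)\tilde{\mathcal T}_{\tilde G}.
\]
Choosing $\tilde\sigma$ large relative to $C_9$ and $\tilde A$ gives $\tilde{\mathcal L}\tilde\Phi\le0$ in $\tilde\Omega_r$. On $\partial\tilde\Omega$ we have $\tilde\Phi\ge0$ because $y_0$ minimizes $\tilde\omega$ and $h=0$ there. On $\tilde\Omega\cap\partial B_r$ we have $\tilde\Phi\ge0$ once $\tilde A r^2\ge\operatorname{osc}\tilde\omega$, which is bounded. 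Since $\tilde\sigma$ depends on $\tilde A$ but not conversely, fix $\tilde A$ first and then $\tilde\sigma$. The maximum principle gives $\tilde\Phi\ge0$ in $\tilde\Omega_r$, and $\tilde\Phi(y_0)=0$ gives $\partial_{\tilde\nu}\tilde\Phi(y_0)\ge0$. Since $\partial_{\tilde\nu}\ln(1+kh)(y_0)=k$ and the quadratic term has zero normal derivative, this yields \eqref{tildevn} with $C_8=\tilde\sigma k$.
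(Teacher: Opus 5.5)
Your overall scheme matches the paper's: the auxiliary function $\tilde\omega-\tilde\omega(y_0)$ plus a barrier in $h$ plus $\tilde A|y-y_0|^2$ on $\tilde\Omega\cap B_\rho(y_0)$, with \eqref{Dxf} absorbing $f_{x_i}h_i$. However, the operator you use is wrong, and Step 1 fails as written. In \eqref{e2.17} the variable $y$ enters $\tilde G$ explicitly, not through $D\tilde u$. Under the Legendre transform the roles swap: the $p$-dependence of $G$, which gave the genuine drift $G_{p_i}$ in Lemma \ref{vnestimate}, becomes explicit $y$-dependence, while the $x$-dependence of $f$ becomes dependence on $D\tilde u$. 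Differentiating gives $\tilde G_{ij}\tilde u_{ijk}+\tilde G_{y_k}=-f_{x_i}\tilde u_{ik}-f_{p_k}$, so $\tilde G_{y_k}$ is a source term, not a drift coefficient. With your operator one actually gets
\[
\tilde{\mathcal L}\tilde u_k=-f_{p_k}-\tilde G_{y_k}+\tilde G_{y_i}\tilde u_{ik},
\]
and the last term enters $\tilde{\mathcal L}\tilde\omega$ multiplied by $\tilde h_{\tilde p_k\tilde p_m}\tilde\nu_m+\tilde\tau\tilde h_{\tilde p_k}$. Nothing controls it. There is no upper bound on $D^2\tilde u$ at this stage, and the good quadratic term cannot absorb it: by \eqref{GeqS}, $\tilde G_{ij}=u_{ia}s_{ab}u_{bj}$ with $[u_{ij}]=[D^2\tilde u]^{-1}$, so $\tilde G_{ij}\tilde u_{il}\tilde u_{jm}=s_{lm}$ is merely bounded. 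The same structure defeats your square completion in Step 2. Bounding $-\tilde G_{ij}\eta_i\eta_j+\tilde G_{y_i}\eta_i$ from above needs $[\tilde G_{ij}]\ge\lambda I$, but its smallest eigenvalue is comparable to $(\max_i\mu_i)^{-2}$, with $\mu_i$ the eigenvalues of $D^2\tilde u$, and this has no positive lower bound before $D^2\tilde u$ is bounded above. Moreover, $|\tilde G_{y_i}\eta_i|\le\Lambda_5k/(1+kh)$ is of the same order as your good term, with no smallness available.

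The repair is what the paper does: use $\tilde{\mathcal L}=\tilde G_{ij}\partial_{ij}+f_{x_i}\partial_i$. Then $\tilde{\mathcal L}\tilde u_k=-f_{p_k}-\tilde G_{y_k}$ is bounded by $\Lambda_5+C$, your Step 1 goes through, and Step 2 needs no square completion (just drop $-\tilde G_{ij}\eta_i\eta_j\le0$). After this correction the logarithm buys nothing, because the only drift is $f_x$, which \eqref{Dxf} already handles. The paper therefore takes the linear barrier $\tilde k h$, using $\tilde{\mathcal L}h\le-\theta\mathcal T_{\tilde G}+f_{x_i}h_i\le-\frac{\theta}{2}\mathcal T_{\tilde G}$; it fixes $\tilde A$ from the oscillation of $\tilde\omega$ and then takes $\tilde k$ large. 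The log barrier of Lemma \ref{vnestimate} is the tool for absorbing a bounded but non-small drift. It works there only because $G_{p_i}-f_{p_i}$ is a genuine drift and $G_{ij}=v^{-1}g^{ij}\ge v^{-1}I$ is uniformly elliptic, and neither feature is present on the Legendre side. Your log version, once the operator is corrected, is also valid, with $C_8=\tilde\sigma k$.
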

\begin{proof}
	By rotation, we may also assume that $y_0=0$ and $\tilde{\nu}(y_0)=(0,1)=:e_n$. Denote a neighborhood of $y_0$ in $\tilde{\Omega}$ by 
	$$\tilde{\Omega}_{\rho}:=\tilde{\Omega}\cap B_{\rho}(y_{0}),$$
	where $\rho$ is a positive constant such that $\tilde{\nu}$ is well defined in $\tilde{\Omega}_{\rho}$. In order to obtain the desired results,
	we  consider the  auxiliary function
	$$\Psi(y)=\tilde{\omega}(y)-\tilde{\omega}(y_{0})+\tilde{k}h(y) +\tilde{A}|y-y_{0}|^{2},$$
	where $\tilde{k}$ and $\tilde{A}$ are positive constants to be determined. 
	
		Firstly, we turn to  estimate $\mathcal{\tilde{L}} \tilde{\omega}$, where
	the elliptic operator
	$$\mathcal{\tilde{L}}=\tilde{G}_{ij}\partial_{p_ip_j}+f_i\partial_{p_i}.$$
	By calculation, we arrive at $\mathcal{\tilde{L}}\tilde{u}_k=-f_{p_k}-\tilde{G}_{y_{k}}$ and
	\begin{equation*}
		\begin{aligned}
			\mathcal{\tilde{L}}\tilde{\omega}=&\tilde{G}_{ij}\tilde{u}_{li}\tilde{u}_{mj}(\tilde{h}_{\tilde{p}_{k}\tilde{p}_{l}\tilde{p}_{m}}\tilde{\nu}_{k}+\tilde{\tau} \tilde{h}_{\tilde{p}_{l}\tilde{p}_{m}})+2\tilde{G}_{ij}\tilde{h}_{\tilde{p}_{k}\tilde{p}_{l}}\tilde{u}_{li}\tilde{\nu}_{kj}\\
			&-(f_{p_k}+\tilde{G}_{y_{k}})(\tilde{h}_{\tilde{p}_{k}\tilde{p}_{m}}\tilde{\nu}_{m}+\tilde{\tau} \tilde{h}_{\tilde{p}_{k}})+\tilde{h}_{\tilde{p}_k}\mathcal{\tilde{L}}\tilde{\nu}_{k}\\
			\leq&(\tilde{h}_{\tilde{p}_{k}\tilde{p}_{l}\tilde{p}_{m}}\tilde{\nu}_{k}+\tilde{\tau} \tilde{h}_{\tilde{p}_{l}\tilde{p}_{m}}+\delta_{lm})\tilde{G}_{ij}\tilde{u}_{il}\tilde{u}_{jm}+C_{9}\mathcal{T}_{\tilde{G}}+C_{10}(1+\tilde{\tau}),
		\end{aligned}
	\end{equation*}
	where
	$$2\tilde{G}_{ij}\tilde{h}_{\tilde{p}_{k}\tilde{p}_{l}}\tilde{u}_{li}\tilde{\nu}_{kj}\leq \delta_{lm}\tilde{G}_{ij}\tilde{u}_{il}\tilde{u}_{jm}
	+C_{9}\mathcal{T}_{\tilde{G}},$$
	by Lemma 3.6 in \cite{HR1}. Since $D^{2}\tilde{h}\leq-\tilde{\theta}I$, we only need to choose $\tilde{\tau}$ sufficiently large depending on the known data such that
	$$\tilde{h}_{\tilde{p}_{k}\tilde{p}_{l}\tilde{p}_{m}}\tilde{\nu}_{k}+\tilde{\tau} \tilde{h}_{\tilde{p}_{l}\tilde{p}_{m}}+\delta_{lm}<0.$$
	Therefore,
	\begin{equation}\label{e3.16}
		\mathcal{\tilde{L}}\tilde{\omega}\leq C_{11}\mathcal{T}_{\tilde{G}},
	\end{equation}
	by the convexity of $\tilde{u}$.
	
	By the uniformy concavity if $h$ and \eqref{Dxf}, one yields that
	\begin{align*}
		\mathcal{\tilde{L}}(h(y))&=\tilde{G}_{ij}h_{ij}+f_{i}h_i\\
		&\leq -\theta\mathcal{T}_{\tilde{G}}+f_{i}h_{i}\\
		&\leq -\frac{\theta}{2}\mathcal{T}_{\tilde{G}},
	\end{align*}
	Then, it follows from \eqref{e3.16} that
	$$\mathcal{\tilde{L}}\Psi\leq(C_{11}-\frac{\tilde{k}\theta}{2}+2\tilde{A}+C_{12})\mathcal{T}_{\tilde{G}},$$
	where $C_{12}=\frac{2A}{\Lambda_8}\max_{\overline{\Omega}\times \overline{\tilde{\Omega}}}|f_{i}|$. Let $$\tilde{k}=\frac{4\tilde{A}+2C_{11}+2C_{12}}{\tilde{k}\theta},$$
	
	It is easy to check that $\Psi(y)\geq0$ on $\partial\tilde{\Omega}$. Note that $\tilde{\omega}$ is bounded, it follows that we can choose $\tilde{A}$ large enough depending on the known data, such that on $\tilde{\Omega}\cap\partial B_{\rho}(y_{0})$,
	\begin{equation*}
		\Psi (y)=\tilde{\omega}(y)-\tilde{\omega}(y_{0})+\tilde{k}h(y)+\tilde{A}\rho^{2}\geq \tilde{\omega}(y)-\tilde{\omega}(y_{0})
		+\tilde{A}\rho^{2}\geq 0.
	\end{equation*}

	The rest of the proof of \eqref{tildevn} is the same as \eqref{vn}. Thus the proof of \eqref{tildevn} is completed.
\end{proof}

By the above two Lemmas, we have the uniformly oblique estimate 
\begin{proposition}\label{llll3.4}
  Assume that $\Omega$, $\tilde{\Omega}$ are bounded, uniformly convex domains with smooth boundary in $\mathbb{R}^{n}$ and  $\tilde{\Omega}\subset\subset B_1(0)$. Suppose f satisfiy \eqref{oscf} and \eqref{Dxf}
   If $u$ is a strictly convex solution to \eqref{e3.3.4}, then the strict obliqueness estimate
\begin{equation}\label{e3.3.12}
\langle\beta, \nu\rangle\geq \frac{1}{C_{13}}>0
\end{equation}
holds on $\partial \Omega$ for some universal constant $C_{13}$, which depends only on $n,\Omega,$ $\tilde{\Omega}$ and $f$.
\end{proposition}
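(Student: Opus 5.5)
We follow the Urbas-type argument: convert the two one-sided normal-derivative bounds of Lemma \ref{vnestimate} and Lemma \ref{tildevnestimate} into a lower bound for $\langle\beta,\nu\rangle$ at the point where $\omega$ attains its minimum. Since $h(Du)=0$ on $\partial\Omega$, we have $\omega=\langle\beta,\nu\rangle$ on $\partial\Omega$. Let $x_0\in\partial\Omega$ be a minimum point of $\omega$ on $\partial\Omega$, and set $y_0=Du(x_0)\in\partial\tilde{\Omega}$. By the identity $\langle\tilde{\beta},\tilde{\nu}\rangle=\langle\beta,\nu\rangle$ and the fact that $D\tilde{u}$ is the inverse of $Du$, the point $y_0$ minimizes $\tilde{\omega}$ on $\partial\tilde{\Omega}$. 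Hence both Lemma \ref{vnestimate} and Lemma \ref{tildevnestimate} apply at the corresponding points. It therefore suffices to bound $\langle\beta,\nu\rangle(x_0)$ from below.

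\textbf{Step 1 (information at $x_0$).} Rotate so that $\nu(x_0)=e_n$. The tangential derivatives of $\omega$ along $\partial\Omega$ vanish at $x_0$. Writing
\[
\partial_n\omega=h_{p_kp_l}u_{ln}\nu_k+h_{p_k}\partial_n\nu_k+\tau h_{p_l}u_{ln},
\]
Lemma \ref{vnestimate} gives $u_{nl}\,(h_{p_kp_l}\nu_k+\tau\beta^l)\ge -C$ at $x_0$. Next, differentiate $h(Du)=0$ tangentially: $\beta^l u_{lj}=0$ for $j<n$. Thus $D^2u\,\beta$ is parallel to $e_n$, that is, $D^2u\,\beta=u_{\beta n}e_n$. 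Since $\beta=\langle\beta,\nu\rangle e_n+\beta^T$, this yields the key relation
\[
\langle\beta,\nu\rangle\,u_{nn}=\beta^l u_{ln}=\langle D^2u\,\beta,\beta\rangle\ge 0 .
\]

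\textbf{Step 2 (dual information at $y_0$).} The same computation for $\tilde{u}$ at $y_0$, using Lemma \ref{tildevnestimate} and $\tilde{\beta}=\nu(D\tilde{u})$, $\tilde{\nu}=Dh=\beta$, gives a lower bound involving $\tilde{u}_{ij}=u^{ij}$. In particular we get $\langle\beta,\nu\rangle\,u^{\tilde\nu\tilde\nu}\approx\langle D^2u^{-1}\nu,\nu\rangle$ in the corresponding rotated frame.

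\textbf{Step 3 (combining).} Following \cite{Ju2,Wang2023}, Step 1 combined with uniform concavity of $h$ gives
\[
u_{\beta\beta}\,\bigl(\tau+\theta\,\ldots\bigr)\le C\,\langle\beta,\nu\rangle,
\]
and Step 2 gives $\langle D^2u^{-1}\nu,\nu\rangle\le C\langle\beta,\nu\rangle$. Now, because $D^2u\,\beta$ is parallel to $\nu$ at $x_0$, we have
\[
\langle\beta,\nu\rangle^2=\langle D^2u\,\beta,\beta\rangle\cdot\langle (D^2u)^{-1}\nu,\nu\rangle\quad\text{(up to normalization),}
\]
which follows from Cauchy--Schwarz with equality in this aligned case. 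Combining the two bounds gives $\langle\beta,\nu\rangle^2\le C\langle\beta,\nu\rangle^2$. This is not yet a contradiction, so the inequalities must be used more carefully: they should be in the form
\[
u_{\beta\beta}\ge c\,\langle\beta,\nu\rangle^{-1}\cdot(\text{positive}),
\]
and analogously for the dual. I would work this out by writing $\partial_n\omega\ge -C$ as
\[
\langle\beta,\nu\rangle\,u_{nn}\,(\tau+h_{p_kp_l}\nu_k\nu_l/\ldots)\ge\theta\,\ldots-C,
\]
so that $u_{\beta\beta}\le C$ and symmetrically $\tilde{u}_{\nu\nu}$-type quantities are $\le C$. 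Their product is then bounded by $C$. Since the product equals $\langle\beta,\nu\rangle^{-2}$ times a quantity of order one (by the alignment identity and $|\beta|=|\nu|=1$), we conclude $\langle\beta,\nu\rangle\ge 1/C_{13}$.

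\textbf{Main obstacle.} The main obstacle is this last bookkeeping step: extracting from the two one-sided normal-derivative bounds precisely the upper bounds $u_{\beta\beta}\le C$ and $\langle (D^2u)^{-1}\nu,\nu\rangle\le C$ at $x_0$. This requires handling the term $h_{p_kp_l}u_{ln}\nu_k$ via the concavity $D^2h\le-\theta I$ and the tangential relation $\beta^lu_{lj}=0$, $j<n$. Once these bounds are in hand, the identity $u_{\beta\beta}\,u^{\nu\nu}\ge\langle\beta,\nu\rangle^{-2}\cdot c$ closes the argument. Finally, since $\omega\ge\omega(x_0)$ on $\partial\Omega$, the bound extends to all of $\partial\Omega$.
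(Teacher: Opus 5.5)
Your skeleton is right: you take the minimum point $x_0$ of $\omega$, note that $y_0=Du(x_0)$ minimizes $\tilde\omega$, and use the alignment $D^2u\,\beta=u_{\beta n}e_n$ to get $\langle\beta,\nu\rangle^2=u_{\beta\beta}\,u^{ij}\nu_i\nu_j$ at $x_0$ (with $|\beta|=|\nu|=1$). But the closing step runs in the wrong direction and proves nothing. By that very identity, upper bounds $u_{\beta\beta}\le C$ and $u^{ij}\nu_i\nu_j\le C$ only give $\langle\beta,\nu\rangle\le C$, which is trivial. Your claim that the product equals $\langle\beta,\nu\rangle^{-2}$ times a quantity of order one contradicts the identity you wrote a line earlier, since the product is exactly $\langle\beta,\nu\rangle^2\le 1$. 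The inequality $u_{\beta\beta}\,u^{ij}\nu_i\nu_j\ge c\langle\beta,\nu\rangle^{-2}$ is then equivalent to $\langle\beta,\nu\rangle^4\ge c$, i.e.\ to the conclusion itself. Moreover, the one-sided bounds $\partial_\nu\omega(x_0)\ge -C_1$ and $\partial_{\tilde\nu}\tilde\omega(y_0)\ge -C_8$ cannot yield upper bounds on second derivatives at all. Your Step 1 relation is also off: the correct one is $u_{\beta\beta}=\langle\beta,\nu\rangle\,u_{\beta n}$ with $u_{\beta n}=\beta^l u_{ln}$, not $\langle\beta,\nu\rangle u_{nn}=\beta^l u_{ln}$.

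What is missing is how to get \emph{lower} bounds on $u_{\beta\beta}$ and $u^{ij}\nu_i\nu_j$ when $\langle\beta,\nu\rangle$ is small. Multiply $\omega_n(x_0)\ge -C_1$ by $\beta^n=\langle\beta,\nu\rangle\ge 0$ and the tangential identities $\omega_l(x_0)=0$, $l<n$, by $\beta^l$, and add. Since $(D^2u\,\beta)_m=0$ for $m<n$, this gives $\tau u_{\beta\beta}\ge -C_1\langle\beta,\nu\rangle-\beta^k\beta^l\nu_{kl}-h_{p_np_n}u_{\beta n}$. The last term is nonnegative because $u_{\beta n}=\partial_n[h(Du)](x_0)\ge 0$ and $D^2h\le 0$. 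The positive contribution comes from the uniform convexity of $\partial\Omega$, not from $D^2h$: extending $\nu$ so that $D\nu\le -\frac{1}{C}\mathrm{diag}(1,\dots,1,0)$ gives $-\beta^k\beta^l\nu_{kl}\ge \frac{1}{C}(1-\langle\beta,\nu\rangle^2)$. Hence either $\langle\beta,\nu\rangle\ge c_0$ or $u_{\beta\beta}\ge c_0$. The same manipulation at $y_0$ uses Lemma \ref{tildevnestimate}, $\tilde\beta=\nu(D\tilde u)$, and $D\tilde\nu=D^2h\le-\theta I$; this is where the concavity of $h$ enters. It gives either $\langle\beta,\nu\rangle\ge c_0$ or $\tilde u_{\tilde\beta\tilde\beta}=u^{ij}\nu_i\nu_j\ge c_0$. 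In the remaining case both factors are bounded below, and only then does the alignment identity close the argument: $\langle\beta,\nu\rangle^2=u_{\beta\beta}\,u^{ij}\nu_i\nu_j\ge c_0^2$. This is exactly the route the paper takes.
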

\begin{proof}

Let $\omega$ and $x_0\in\partial\Omega$ as the same in the proof of Lemma \ref{vnestimate}. 
Combined the
 convexity of $\Omega$, we extend $\nu$ smoothly to a tubular neighborhood of $\partial\Omega$ such that in the matrix sense
\begin{equation}\label{e3.3.9}
  \left(\nu_{kl}\right):=\left(D_k\nu_l\right)\leq -\frac{1}{C_{14}}\operatorname{diag} (\underbrace {1,\cdots, 1}_{n-1},0),
\end{equation}
where $C_{14}$ is a positive constant depending only on $\Omega$. By Lemma \ref{lem3.5}, we see that $h_{p_{n}}(Du(x_0))\geq0$.

At the minimum point  $x_0$, it yields
\begin{equation}\label{e3.3.14}
 0=\omega_l=h_{p_np_k}u_{kl}+h_{p_k}\nu_{kl}+\tau h_{p_k}u_{kl}, \quad 1\leq l\leq n-1.
\end{equation}
From Lemma \eqref{vnestimate}, it follows that  
\begin{equation}\label{e3.1.1}
\omega_n(x_0)>-C_{1}
\end{equation}
holds. We observe that \eqref{e3.1.1} can be rewritten as
\begin{equation}\label{e3.7}
h_{p_np_k}u_{kn}+h_{p_k}\nu_{kn}+ \tau h_{p_k}u_{kn}>-C_{1}.
\end{equation}

Multiplying $h_{p_n}$ on both sides of \eqref{e3.7} and $h_{p_r}$ on both sides of \eqref{e3.3.14} respectively, and summing up together, one gets
\begin{align*}
 \tau h_{p_k}h_{p_l}u_{kl}\geq -C_{1}h_{p_n}- h_{p_k}h_{p_l}\nu_{kl}- h_{p_k}h_{p_np_l}u_{kl}.
\end{align*}

Combining (\ref{e3.3.9}) with
$$ 1\leq l\leq n-1,\quad h_{p_k}u_{kl}=\frac{\partial h(Du)}{\partial x_l}=0,\quad h_{p_k}u_{kn}=\frac{\partial h(Du)}{\partial x_n}\geq 0,\quad -h_{p_np_n}\geq 0,$$
we have
$$\tau h_{p_k}h_{p_l}u_{kl}\geq-C_{1}h_{p_n}+\frac{1}{C_{14}}|Dh|^2-\frac{1}{C_{14}}h^2_{p_n}
\geq-C_{15}h_{p_n}+\frac{1}{C_{15}}-\frac{1}{C_{15}}h^2_{p_n},$$
where $C_{15}=\max\{C_{1},C_{14}\}$.
Now, to obtain the estimate $\langle \beta,\nu\rangle$, we divide $-C_{15}h_{p_n}+\frac{1}{C_{15}}-\frac{1}{C_{15}}h^2_{p_n}$ into two cases at $x_0$.

Case (i).  If
$$-C_{15}h_{p_n}+\frac{1}{C_{15}}-\frac{1}{C_{15}}h^2_{p_n}\leq \frac{1}{2C_{15}},$$
then
$$h_{p_k}(Du)\nu_{k}=h_{p_n}\geq \sqrt{\frac{1}{2}+\frac{C^{4}_{15}}{4}}-\frac{C^{2}_{15}}{2}.$$
It means that there is a uniform positive lower bound for $\underset{\partial\Omega}\min \langle \beta,\nu\rangle$.

Case (ii). If
$$-C_{15}h_{p_n}+\frac{1}{C_{15}}-\frac{1}{C_{15}}h^2_{p_n}> \frac{1}{2C_{15}},$$
then we know that there is a positive lower bound for $h_{p_k}h_{p_l}u_{kl}$.

Denote $y_{0}=Du(x_0)$, then $$\tilde{\omega}(y_{0})=\omega(x_{0})=\min_{\partial\tilde{\Omega}} \tilde{\omega}.$$

From Lemma \eqref{tildevnestimate}, it follows that  
\begin{equation}\label{e3.9}
\tilde{\omega}_{n}(y_{0})\geq -C_{8}
\end{equation}
holds. Then, we obtain the positive lower bounds of $\tilde{h}_{\tilde{p}_{k}}\tilde{h}_{\tilde{p}_{l}}\tilde{u}_{kl}$ or
$$h_{p_{k}}(Du)\nu_{k}=\tilde{h}_{\tilde{p}_{k}}(D\tilde{u})\tilde{\nu}_{k}=\tilde{h}_{\tilde{p}_{n}}\geq
\sqrt{\frac{1}{2}+\frac{C^{4}_{16}}{4}}-\frac{C^{2}_{16}}{2}.$$

On the other hand, it is easy to check that
$$\tilde{h}_{\tilde{p}_{k}}\tilde{h}_{\tilde{p}_{l}}\tilde{u}_{kl}=\nu_{i}\nu_{j}u^{ij}.$$

Then by the positive lower bounds of $h_{p_{k}}h_{p_{l}}u_{kl}$ and $\tilde{h}_{\tilde{p}_{k}}\tilde{h}_{\tilde{p}_{l}}\tilde{u}_{kl}$, the desired conclusion can be obtained by
\begin{align*}
\langle \beta,\nu\rangle=\sqrt{h_{p_k}h_{p_l}u_{kl}u^{ij}\nu_i\nu_j}.
\end{align*}

For details of the proof of the above formula, the readers can refer to \cite{Ju2} or \cite{J2}.
It remains to prove the key estimates \eqref{e3.1.1} and \eqref{e3.9}.
\end{proof}

\section{The global $C^2$ estimate}\label{sec4}
We now proceed to carry out the $C^2$ estimate. The strategy is to construct suitable barrier function and use elliptic
maximum principle to reduce the $ C^2$ global estimates
of $u$ and $\tilde{u}$ to the boundary.
\begin{lemma}\label{lem4.1}
 Assume that $\Omega$, $\tilde{\Omega}$ are bounded, uniformly convex domains with smooth boundary in $\mathbb{R}^{n}$, $\tilde{\Omega}\subset\subset B_{1}(0)$. Suppose $f$ satisfy \eqref{oscf} and   $0<\alpha<1$, $u\in C^{2+\alpha}(\bar{\Omega})$ is uniformly convex solution to \eqref{e1.1.3} and \eqref{e1.1.4}, then there exists  positive constants $C_{17}$,$C_{18}$,$C_{19}$ depending only on $n$, $\Omega$, $\tilde{\Omega}$ and $f$, such that
\begin{equation}\label{eq3.15}
D^2 u(x) \leq C_{17} I_n,\ \ x\in\bar\Omega
\end{equation}
and
\begin{align}
\label{e4.4.2}
    C_{18}\leq u_{11}+u_{22}+\cdots+u_{nn}\leq C_{19},
\end{align}
where $I_n$ is the $n\times n$ identity matrix.
\end{lemma}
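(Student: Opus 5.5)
The trace bound \eqref{e4.4.2} is the quickest part, so I would do it first; the full bound \eqref{eq3.15} then follows from it. Since $F(\kappa)=\sum\kappa_i=\mathrm{tr}[a_{ij}]$ with $a_{ij}=\frac1v b^{ik}u_{kl}b^{lj}$, I would write
\[
\sum_i\kappa_i=\frac1v\,\mathrm{tr}\big(B\,D^2u\,B\big)=\frac1v\,\mathrm{tr}\big(B^2 D^2u\big),\qquad B=[b^{ij}] .
\]
Here $B^2=[g^{ij}]$, so the eigenvalues of $B^2$ lie between $1$ and $\frac{1}{1-\max_{\overline{\tilde\Omega}}|p|^2}$. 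This is controlled because $Du(\overline\Omega)=\overline{\tilde\Omega}\subset\subset B_1(0)$. Also $v=\sqrt{1-|Du|^2}$ lies in $[v_0,1]$ with $v_0>0$.

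For a positive semidefinite $D^2u$ (convexity of $u$), I would use the standard inequality $\lambda_{\min}(B^2)\,\mathrm{tr}D^2u\le \mathrm{tr}(B^2D^2u)\le\lambda_{\max}(B^2)\,\mathrm{tr}D^2u$. Combining it with Lemma \ref{L2.1}, $\Lambda_1\le F(\kappa)\le\Lambda_2$, gives
\[
v_0\Lambda_1\big(1-\max|p|^2\big)\le \Delta u\le \Lambda_2 \quad\text{in }\overline\Omega
\]
(after sorting out which direction each factor enters). This gives $C_{18},C_{19}$. Lemma \ref{L2.1} applies because \eqref{oscf} is assumed; the dependence of $\Lambda_1,\Lambda_2$ on $c$ is harmless, since Lemma \ref{L2.1} already bounds $F$ independently of $c$.

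For \eqref{eq3.15}, convexity gives $0\le D^2u$, so every eigenvalue $\lambda_i$ of $D^2u$ satisfies $\lambda_i\le\sum_j\lambda_j=\Delta u\le C_{19}$. Hence $D^2u\le C_{19}I_n$ and we may take $C_{17}=C_{19}$. This argument is genuinely easy for the mean curvature operator, because $F$ is linear and the bound on $F$ from Lemma \ref{L2.1} directly controls the trace. No maximum-principle argument on $\max_{|\xi|=1}u_{\xi\xi}$ with a boundary reduction via the strict obliqueness estimate of Proposition \ref{llll3.4} is needed, unlike the Lagrangian case in \cite{Wang2023}.

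The only point I expect to need care is justifying convexity up to the boundary and nondegeneracy, i.e. $D^2u\ge0$ on $\overline\Omega$. This is part of the hypothesis that $u$ is uniformly convex and $C^{2+\alpha}(\overline\Omega)$, and I would use it without further argument.

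If a positive lower bound $D^2u\ge c_0I$ were also wanted (it is not claimed here), I would work with the Legendre transform $\tilde u$. Applying the same reasoning to $\tilde G$ via Corollary \ref{c3.4} would give $D^2\tilde u\le C$, hence $D^2u\ge C^{-1}$. For the stated lemma, however, the trace argument above suffices.
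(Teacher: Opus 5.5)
Your proof is correct and is essentially the paper's argument. Both rest on the bound $\Lambda_1\le F\le\Lambda_2$ from Lemma \ref{L2.1}, together with the fact that $v$ and $[b^{ij}]$ (equivalently $[g^{ij}]$) are uniformly controlled because $Du(\overline\Omega)=\overline{\tilde\Omega}\subset\subset B_1(0)$. The only difference is the order: the paper bounds $D^2u=v[b_{ij}]\mathcal{A}[b_{ij}]$ first and then the trace, whereas you bound the trace first and get the full Hessian from $0\le D^2u\le(\Delta u)I_n$.

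One caveat concerns only your closing aside, not the proof. The trace trick does not transfer to $\tilde u$: a bound on $\tilde F=-\sum_i\eta_i^{-1}$ controls the $\eta_i$ only from below, which is again just $D^2u\le CI_n$. That is why the paper needs the obliqueness and barrier arguments of Lemmas \ref{lem4.2}--\ref{lem4.7} to obtain $D^2u\ge C^{-1}I_n$.
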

\begin{proof}
By the proof of \eqref{e2.2.8} in Lemma \ref{L2.1}, we can show that
$F=\sum_{i=1}^na_{ii}$ is bounded.
From the formula in (\ref{e2.2.13}), we get that
$$u_{kl}=v\sum_{i}^nb_{ik}a_{im}b_{ml},\quad k,l=1,2,\cdots,n.$$
Then, by using \begin{equation*}
b_{ij}=\delta_{ij}-\frac{D_{i}uD_{j}u}{1+v},\;
|Du|<1,\;
v=\sqrt{1-|Du|^2}
\end{equation*}
and the second boundary condition,   we obtain $D^2 u(x) $ is bounded.

In addition, by Lemma \ref{L2.1}, we know
\begin{align*}
    \Lambda_1\leq\kappa_1+\cdots +\kappa_n\leq \Lambda_2
\end{align*}
and 
then we obtain \eqref{e4.4.2}.
\end{proof}

The following Lemma is to reduce the global $C^2$ estimates of $\tilde{u}$ to the boundary.

\begin{lemma}\label{lem4.2}
     Suppose $f \in \mathcal{A}$ and  satisfy \eqref{oscf}, \eqref{Dxf} and
     \begin{equation}\label{Dxpf}
     	\max_{\overline{\Omega}\times \overline{\tilde{\Omega}}}|D_{xp}f|\leq \frac{\Lambda_6\theta}{8\max_{\overline{\tilde{\Omega}}}|h|}.
     \end{equation}
       If $\tilde{u}$ is a smooth uniformly convex solution of \eqref{e2.17} and there hold \eqref{e2.2.8}-\eqref{e2.2.12}, then there exists a positive constant $C_{20}$ depending only on $n, \Omega, \tilde{\Omega}$ and $f$, such that
       \begin{equation}\label{intobod}
       	\sup _{\tilde{\Omega}}\left|D^2 \tilde{u}\right| \leq 2\max _{\partial \tilde{\Omega}}\left|D^2 \tilde{u}\right|+C_{20}.
       \end{equation}
\end{lemma}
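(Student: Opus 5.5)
We reduce the interior second-derivative bound for $\tilde u$ to the boundary by a maximum principle argument applied to the largest pure second derivative, corrected by a barrier built from the defining function $h$ of $\tilde\Omega$. Since $\tilde u$ is convex, $|D^2\tilde u|$ is comparable to $\max_{|\xi|=1}\tilde u_{\xi\xi}$. So we study
$$W(y,\xi)=\tilde u_{\xi\xi}(y)-\kappa\, h(y),\qquad y\in\overline{\tilde\Omega},\ |\xi|=1,$$
with a constant $\kappa>0$ to be chosen. Let $(y_1,\xi_1)$ be a maximum point of $W$. If $y_1\in\partial\tilde\Omega$, then $h(y_1)=0$, and we get $\sup\tilde u_{\xi\xi}\le \max_{\partial\tilde\Omega}|D^2\tilde u|+\kappa\max|h|$. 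This is of the form \eqref{intobod} once $\kappa$ is controlled. The work is in the interior case.

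In the interior case we rotate so that $\xi_1=e_1$ and differentiate the equation in \eqref{e2.17} twice in the $e_1$ direction. The first derivative gives
$$\tilde G_{ij}\tilde u_{ij1}+\tilde G_{y_1}=-f_i\tilde u_{i1}-f_{p_1},$$
as already computed before Corollary \ref{c3.4}. Differentiating once more gives
$$\tilde G_{ij}\tilde u_{ij11}+\tilde G_{ij,kl}\tilde u_{ij1}\tilde u_{kl1}+2\tilde G_{ij,y_1}\tilde u_{ij1}+\tilde G_{y_1y_1}=-f_{ik}\tilde u_{i1}\tilde u_{k1}-f_i\tilde u_{i11}-2f_{ip_1}\tilde u_{i1}-f_{p_1p_1}.$$
The key structural input is that $\tilde F(\eta)=-F(\eta^{-1})$ is concave, since $\eta\mapsto\sum\eta_i^{-1}$ is convex on $\Gamma_n^+$. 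Hence $\tilde G_{ij,kl}\tilde u_{ij1}\tilde u_{kl1}\le0$, up to the smooth $y$-dependence coming from $\tilde b_{ij}$ and $\sqrt{1-|y|^2}$. The mixed term $2\tilde G_{ij,y_1}\tilde u_{ij1}$ is absorbed by Cauchy--Schwarz into the concavity term plus $C\mathcal T_{\tilde G}(1+\tilde u_{11}^2)$, in the spirit of Lemma \ref{l3.30}. Here we use Corollary \ref{c3.4} to control $\tilde G_{ij}$ and $\tilde G_{ij}\mu$-type quantities by $\mathcal T_{\tilde G}$.

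The hypothesis $f\in\mathscr A$ enters through the term $-f_{ik}\tilde u_{i1}\tilde u_{k1}$, where $f_{ik}=\partial^2 f/\partial x_i\partial x_k$ is evaluated at $x=D\tilde u$. Concavity of $f$ in $x$ makes this term $\ge0$ on the right, so it has the favourable sign after moving to the left. The term $-f_i\tilde u_{i11}$ is harmless, since it is part of the operator $\tilde{\mathcal L}=\tilde G_{ij}\partial_{ij}+f_i\partial_i$ used in Lemma \ref{tildevnestimate}. Thus at $y_1$, where $D W=0$ and $\tilde{\mathcal L}W\le0$, we obtain
$$0\ge \tilde{\mathcal L}W\ \ge\ -C\mathcal T_{\tilde G}-2|D_{xp}f|\,\tilde u_{11}-C-\kappa\,\tilde{\mathcal L}h .$$
By the computation in Lemma \ref{tildevnestimate} together with \eqref{Dxf}, we have $-\tilde{\mathcal L}h\ge\frac\theta2\mathcal T_{\tilde G}\ge\frac{\theta}{2}\Lambda_8$.

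The main obstacle is the term linear in $\tilde u_{11}$ with coefficient $|D_{xp}f|$, which cannot be absorbed by a constant $\kappa$ alone. We will handle it by taking $\kappa$ proportional to the quantity being estimated, namely $\kappa=\frac12\max_{\tilde\Omega}\tilde u_{11}$-scale, or equivalently by working with $\tilde u_{\xi\xi}(1-\epsilon h)^{-1}$-type weights. The smallness \eqref{Dxpf},
$$|D_{xp}f|\le \frac{\Lambda_6\theta}{8\max|h|},$$
then guarantees that the barrier contribution $\kappa\frac\theta2\mathcal T_{\tilde G}$ dominates $2|D_{xp}f|\tilde u_{11}$. The remaining constants are bounded using Corollary \ref{c3.4}, which gives $\mathcal T_{\tilde G}\ge\Lambda_8$. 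The interior case then either yields $\tilde u_{11}(y_1)\le C$ or is impossible.

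Finally, the factor $2$ in \eqref{intobod} comes out when we undo the weight: $\kappa\max|h|$ is at most half of $\sup|D^2\tilde u|$, so this portion is absorbed into the left-hand side. This gives $\sup_{\tilde\Omega}|D^2\tilde u|\le 2\max_{\partial\tilde\Omega}|D^2\tilde u|+C_{20}$.
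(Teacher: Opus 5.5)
Your overall strategy matches the paper's, but the step handling the third-order terms fails as you wrote it.

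\textbf{What matches.} You run a maximum principle on a pure second derivative of $\tilde u$ corrected by a multiple of $h$, with the multiple proportional to $\sup_{\tilde\Omega}|D^2\tilde u|$. You use concavity of $f$ in $x$ to discard $-f_{ik}\tilde u_{i1}\tilde u_{k1}$, the bound $\tilde{\mathcal L}h\le-\frac{\theta}{2}\mathcal T_{\tilde G}$ from \eqref{Dxf}, and \eqref{Dxpf} to absorb half of $\sup|D^2\tilde u|$ into the left side. The paper applies the maximum principle to $w-\tilde u_{kk}$ with $w=\max_{\partial\tilde\Omega}\tilde u_{kk}+Kh$, which is the same argument.

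\textbf{The gap.} You absorb $2\tilde G_{ij,y_1}\tilde u_{ij1}$ into the concavity term ``plus $C\mathcal T_{\tilde G}(1+\tilde u_{11}^2)$''. A remainder of that size cannot be beaten. Since $\mathcal T_{\tilde G}\ge\Lambda_8$, it is at least $C\Lambda_8\tilde u_{11}^2$, which is quadratic in the quantity being estimated. The barrier only supplies $\kappa\frac{\theta}{2}\mathcal T_{\tilde G}$ with $\kappa$ linear in $\sup|D^2\tilde u|$. Your next display simply drops the quadratic term. Abstract concavity of $\tilde F$ plus Cauchy--Schwarz does not by itself give an $O(1)$ remainder. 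The missing input is the upper bound on $D^2u$ from Lemma \ref{lem4.1}, i.e.\ a lower bound on the eigenvalues of $D^2\tilde u$.

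\textbf{How the paper closes it.} $G$ is linear in $D^2u$: $G=s_{ij}(Du)u_{ij}$ with $s\ge I$. Hence $\tilde G(y,D^2\tilde u)=-s_{ij}(y)[(D^2\tilde u)^{-1}]_{ij}$.
\begin{itemize}
\item In a frame where $D^2\tilde u=\mathrm{diag}(\mu_1,\dots,\mu_n)$, the concavity term is at least $2\sum_{i,j}\mu_i^{-2}\mu_j^{-1}\tilde u_{ij1}^2$.
\item The mixed term is at most $C\sum_{i,j}\mu_i^{-1}\mu_j^{-1}|\tilde u_{ij1}|$.
\item Cauchy--Schwarz then leaves only $C\sum_j\mu_j^{-1}=C\Delta u\le CC_{19}$ by \eqref{e4.4.2}.
\end{itemize}
In your notation the remainder is $C\sum_i\tilde G_{ii}\mu_i\le C(\sum_i\tilde G_{ii})^{1/2}(\sum_i\tilde G_{ii}\mu_i^2)^{1/2}$, which Corollary \ref{c3.4} bounds; it is not $C\mathcal T_{\tilde G}\mu_1^2$. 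The same bound on $D^2u$ also controls the term $\tilde G_{y_1y_1}=-\partial^2_{y_1}s_{ij}\,u_{ij}$ that you kept.

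\textbf{Fixing $\kappa$.} With the remainder repaired, your argument closes once $\kappa$ is chosen concretely rather than at a vague ``$\frac12\max\tilde u_{11}$'' scale. Take
\[
\kappa=\frac{2}{\theta\Lambda_8}\bigl(C+1+2\max|D_{xp}f|\sup_{\tilde\Omega}|D^2\tilde u|\bigr).
\]
Then the interior case is contradictory. Also \eqref{Dxpf} makes the $\sup|D^2\tilde u|$-part of $\kappa\max|h|$ at most one half. This needs $\Lambda_8$ in place of $\Lambda_6$ in \eqref{Dxpf}, which is also what the paper's own computation uses.

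\textbf{A minor improvement.} Maximizing jointly over $(y,\xi)$ makes $e_1$ an eigenvector at $y_1$. So you may bound $2f_{ip_1}\tilde u_{i1}$ by $2|D_{xp}f|\,\tilde u_{11}(y_1)$ rather than by $\sup|D^2\tilde u|$ as the paper does.
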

\begin{proof}
	Let 
	$$\tilde{L}=\tilde{G}_{ij}\partial_{p_ip_j}+f_i\partial_{p_i},$$

Denote
\begin{align*}
    s_{ij}=\frac{1}{\sqrt{1-|y|^2}}\left(\delta_{ij}+\frac{y_{i}y_{j}}{1-|y|^{2}}\right),\quad[\tilde{u}_{ij}]=[u_{ij}]^{-1}.
\end{align*}
And then
\begin{align*}
    \frac{1}{\sqrt{1-|y|^2}} I_n \leq [s_{ij}]\leq \frac{1}{\left(1-|y|^2\right)^{\frac{3}{2}}}I_n,
\end{align*}
where $I_n$ is the $n \times n$ identity matrix. By calculating, we show that
 \begin{align*}
   G(y,D^2u)&=\mathrm{div} \left(\frac{Du}{\sqrt{1-|Du|^2}}\right)\\
   &=\frac{1}{\sqrt{1-|Du|^2}}\left(\delta_{ij}+\frac{y_{i}y_{j}}{1-|y|^{2}}\right)u_{ij},
\end{align*}
and then
\begin{equation}\label{GeqS}
	\tilde{G}(y, D^{2}\tilde{u})=-s_{ij}u_{ij}.
\end{equation}
By the rotation of the coordinate system for any fixed point $y_0\in \tilde{\Omega}$, such that  we can get at $y_0=Du(x_0)$,
\begin{align*}
	\left[D^2{\tilde{u}}\right]_{|y_0}=\mathrm{diag}(\tilde{u}_{11},\tilde{u}_{22},\cdots\tilde{u}_{nn})
\end{align*}
and
\begin{align*}
\left[D^2u\right]_{|y_0}=\mathrm{diag}(u_{11},u_{22},\cdots,u_{nn}).
\end{align*}
For any $1\leq k\leq n$, by \eqref{e4.4.2} we obtain
\begin{align*}
	-\tilde{G}_{ij,rs}\tilde{u}_{ijk}\tilde{u}_{rsk}-2\tilde{G}_{ij,y_k}\tilde{u}_{ijk}&=-G_{ii,ii}\tilde{u}_{iik}^2-2\tilde{G}_{ii,y_k}\tilde{u}_{iik}\\
	&=s_{ii}\frac{2}{\tilde{u}_{ii}^3}\tilde{u}_{iik}^2-\frac{2}{\tilde{u}_{ii}^2}\dfrac{\partial s_{ii}}{\partial y_k}\tilde{u}_{iik}\\
	&\geq s_{ii}\frac{2}{\tilde{u}_{ii}^3}\tilde{u}_{iik}^2-C_{21}\frac{2}{\tilde{u}_{ii}^2}|\tilde{u}_{iik}|\\
	&\geq \frac{2}{\tilde{u}_{ii}^3}\tilde{u}_{iik}^2-\frac{2}{\tilde{u}_{ii}^3}\tilde{u}_{iik}^2-C_{21}^2\frac{2}{\tilde{u}_{ii}}\\
	&=-C_{21}^2\frac{2}{\tilde{u}_{ii}}
	\geq -2C_{21}^2C_{19},
\end{align*}
where $C_{21}=\max_{\overline{\tilde{\Omega}}}|\frac{y_k+2y_i\delta_{ik}}{(1+|y|^2)^{3/2}}+\frac{3y_{i}^2y_{k}}{(1-|y|^2)^{5/2}}|$. 

 The second derivation of $y_k$ with respect to \eqref{e2.17}, we can obtain
\begin{align*}
	\tilde{\mathcal{L}}\tilde{u}_{kk}=-\tilde{G}_{ij,rs}\tilde{u}_{ijk}\tilde{u}_{rsk}-2\tilde{G}_{ij,y_k}\tilde{u}_{ijk}-f_{ij}\tilde{u}_{ik}\tilde{u}_{jk}-2f_{ip_k}\tilde{u}_{ik}-f_{p_kp_k}
\end{align*}
Then by the concavity of $f$ in $x$, we have
\begin{align*}
\tilde{\mathcal{L}}\tilde{u}_{kk}&\geq -2C_{21}^2C_{19}-2f_{ip_k}\tilde{u}_{ik}-f_{p_kp_k}\\
&\geq -2\max_{\overline{\Omega}\times\overline{\tilde{\Omega}}}|D_{xp}f|\sup_{\tilde{\Omega}}|D^2\tilde{u}|-f_{p_kp_k}-2C_{21}^2C_{19}.
\end{align*}
Let
$$w=\max_{\partial\tilde{\Omega}} \tilde{u}_{kk}+\frac{2}{\Lambda_8\theta}\left(2C_{21}^2C_{19}+\max_{\overline{\Omega}\times\overline{\tilde{\Omega}}}|D_{pp}f|+2\max_{\overline{\Omega}\times\overline{\tilde{\Omega}}}|D_{xp}f|\sup_{\tilde{\Omega}}|D^2\tilde{u}|\right)h.$$
By direct calculation and \eqref{e3..3.3}, \eqref{Dxf}, we obtain
\begin{align*}
	\tilde{L}w&\leq \frac{2}{\Lambda_8\theta}\left(2C_{21}^2C_{19}+\max_{\overline{\Omega}\times\overline{\tilde{\Omega}}}|D_{pp}f|+2\max_{\overline{\Omega}\times\overline{\tilde{\Omega}}}|D_{xp}f|\sup_{\tilde{\Omega}}|D^2\tilde{u}|\right)\left(-\theta\mathcal{T}_{\tilde{G}}+f_{i}h_{p_i}\right)\\
	&\leq -2C_{21}^2C_{19}-\max_{\overline{\Omega}\times\overline{\tilde{\Omega}}}|D_{pp}f|-\max_{\overline{\Omega}\times\overline{\tilde{\Omega}}}|D_{xp}f|\sup_{\tilde{\Omega}}|D^2\tilde{u}|\quad in~\tilde{\Omega}
\end{align*}
and thus
$$\tilde{L}(w-\tilde{u}_{kk})\leq 0\quad in~\tilde{\Omega}.$$
It is obvious that $w-\tilde{u}_{kk}\geq 0$ on $\partial\tilde{\Omega}$. Then, by the maximum principle, we obtain
\begin{align*}
	\sup_{\tilde{\Omega}}\tilde{u}_{kk}\leq \sup_{\tilde{\Omega}} w&\leq \max_{\partial\tilde{\Omega}} \tilde{u}_{kk}+\max_{\overline{\tilde{\Omega}}}|h|\frac{2}{\Lambda_8\theta}\left(2C_{21}^2C_{19}+\max_{\overline{\Omega}\times\overline{\tilde{\Omega}}}|D_{pp}f|+2\max_{\overline{\Omega}\times\overline{\tilde{\Omega}}}|D_{xp}f|\sup_{\tilde{\Omega}}|D^2\tilde{u}|\right)\\
	&\leq \max_{\partial\tilde{\Omega}}|D^2\tilde{u}|+\frac{2}{\Lambda_8\theta}\max_{\overline{\tilde{\Omega}}}|h|\left(2C_{21}^2C_{19}+\max_{\overline{\Omega}\times\overline{\tilde{\Omega}}}|D_{pp}f|\right)\\
	&\quad +\frac{4}{\Lambda_8\theta}\max_{\overline{\tilde{\Omega}}}|h|\max_{\overline{\Omega}\times\overline{\tilde{\Omega}}}|D_{xp}f|\sup_{\tilde{\Omega}}|D^2\tilde{u}|.
\end{align*}
Let $C_{20}=\frac{4}{\Lambda_8\theta}\max_{\overline{\tilde{\Omega}}}|h|\left(2C_{21}^2C_{19}+\max_{\overline{\Omega}\times\overline{\tilde{\Omega}}}|D_{pp}f|\right)$, this completes the proof.
\end{proof}

Recalling that $\tilde{\beta}=(\tilde{\beta}^{1}, \cdots, \tilde{\beta}^{n})$ with $\tilde{\beta}^{k}:=\tilde{h}_{\tilde{p}_{k}}(D\tilde{u})$ and $\tilde{\nu}=(\tilde{\nu}_{1}, \tilde{\nu}_{2},\cdots,\tilde{\nu}_{n})$ is the unit inward normal vector of $\partial\tilde{\Omega}$.
In the following we give the arguments, the readers can see there for more details.
For any tangential direction $\tilde \varsigma$, we have
\begin{equation}\label{e3.35}
   \tilde{u}_{\tilde\beta \tilde\varsigma}=\tilde h_{\tilde{p}_k}(D\tilde u)\tilde u_{k\tilde\varsigma}=0.
\end{equation}

Then the second order derivative of $\tilde u$ on the boundary is also controlled by $u_{\tilde\beta \tilde\varsigma}$, $u_{\tilde\beta \tilde\beta}$ and $u_{\tilde\varsigma\tilde\varsigma}$. At $\tilde x\in \partial\tilde\Omega$, any unit vector $\tilde\xi$ can be written in terms of a tangential component $\tilde\varsigma(\tilde\xi)$ and a component in the direction $\tilde\beta$ by
$$\tilde\xi=\tilde\varsigma(\tilde\xi)+\frac{\langle \tilde\nu,\tilde\xi\rangle}{\langle\tilde\beta,\tilde\nu\rangle}\tilde\beta,$$
where
$$\tilde\varsigma(\tilde\xi):=\tilde\xi-\langle \tilde\nu,\tilde\xi\rangle \tilde\nu-\frac{\langle \tilde\nu,\tilde\xi\rangle}{\langle\tilde\beta,\tilde\nu\rangle}\tilde\beta^T$$
and
$$\tilde\beta^T:=\tilde\beta-\langle \tilde\beta,\tilde\nu\rangle \tilde\nu.$$

We observe that $\langle\tilde\beta,\tilde\nu\rangle=\langle\beta,\nu\rangle$.
By the uniformly obliqueness estimate (\ref{e3.3.12}), we have
\begin{equation}\label{e3.36}
\begin{aligned}
|\tilde{\varsigma}(\tilde{\xi})|^{2}&=1-\left(1-\frac{|\tilde{\beta}^{T}|^{2}}{\langle\tilde{\beta},\tilde{\nu}\rangle^{2}}\right)
\langle\tilde{\nu},\tilde{\xi}\rangle^{2}
-2\langle\tilde{\nu},\tilde{\xi}\rangle\frac{\langle\tilde{\beta}^{T},\tilde{\xi}\rangle}{\langle\tilde{\beta},\tilde{\nu}\rangle}\\
&\leq C_{22}.
\end{aligned}
\end{equation}

Denote $\varsigma:=\frac{\varsigma(\xi)}{|\varsigma(\xi)|}$, then by (\ref{e3.3.12}), (\ref{e3.35}) and (\ref{e3.36}), we arrive at
\begin{equation}\label{e4.4.5}
\begin{aligned}
\tilde{u}_{\tilde{\xi}\tilde{\xi}}&=|\tilde{\varsigma}(\tilde{\xi})|^{2}
\tilde{u}_{\tilde{\varsigma}\tilde{\varsigma}}+2|\tilde{\varsigma}(\tilde{\xi})|\frac{\langle\tilde{\nu},\tilde{\xi}\rangle}{\langle\tilde{\beta},\tilde{\nu}\rangle}\tilde{u}_{\tilde{\beta}\tilde{\varsigma}}+
\frac{\langle\nu,\xi\rangle^{2}}{\langle\beta,\nu\rangle^{2}}
\tilde{u}_{\tilde{\beta}\tilde{\beta}}\\
&=|\tilde{\varsigma}(\tilde{\xi})|^{2}\tilde{u}_{\tilde{\varsigma}\tilde{\varsigma}}+\frac{\langle\tilde{\nu},\tilde{\xi}\rangle^{2}}{\langle\tilde{\beta},\tilde{\nu}\rangle^{2}}
\tilde{u}_{\tilde{\beta}\tilde{\beta}}\\
&\leq C_{23}(\tilde{u}_{\tilde{\varsigma}\tilde{\varsigma}}+\tilde{u}_{\tilde{\beta}\tilde{\beta}}).
\end{aligned}
\end{equation}

Therefore, we also only need to estimate $\tilde{u}_{\tilde\beta\tilde\beta}$ and $\tilde{u}_{\tilde\varsigma\tilde\varsigma}$ respectively.
\begin{lemma}\label{lem4.3}
If $\tilde{u}$ is a strictly convex solution of \eqref{e2.17} and $f$ satisfy \eqref{oscf}, then there exists a positive constant $C_{24}$ depending only on $\Omega$, $\tilde{\Omega}$ and $f$, such that
\begin{equation}\label{e4 4.6}
   \max_{\partial\Omega}\tilde{u}_{\tilde{\beta}\tilde{\beta}} \leq C_{24}.
\end{equation}
\end{lemma}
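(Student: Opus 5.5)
The bound on $\tilde{u}_{\tilde\beta\tilde\beta}$ can be obtained pointwise on the boundary from the duality between $u$ and $\tilde u$ and the strict obliqueness estimate, with no further maximum principle argument. (The maximum in the statement should be read over $\partial\tilde\Omega$, where $\tilde u$ lives.)

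\emph{Step 1 (rewrite in terms of $u$).} Fix $y\in\partial\tilde\Omega$ and let $x=D\tilde u(y)\in\partial\Omega$. Since $\tilde\beta=D\tilde h(D\tilde u)=D\tilde h(x)=\nu(x)$ is the unit inward normal of $\partial\Omega$ at $x$, and $D^2\tilde u(y)=[D^2u(x)]^{-1}$, we get
\[
\tilde u_{\tilde\beta\tilde\beta}(y)=u^{ij}(x)\nu_i\nu_j .
\]
This is exactly the quantity $\tilde h_{\tilde p_k}\tilde h_{\tilde p_l}\tilde u_{kl}=\nu_i\nu_j u^{ij}$ appearing in the proof of Proposition \ref{llll3.4}.

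\emph{Step 2 (use the identity for $\langle\beta,\nu\rangle$).} By the identity
\[
\langle\beta,\nu\rangle^2=h_{p_k}h_{p_l}u_{kl}\,u^{ij}\nu_i\nu_j
\]
quoted in the proof of Proposition \ref{llll3.4} (from \cite{Ju2,J2}), we have
\[
\tilde u_{\tilde\beta\tilde\beta}=\frac{\langle\beta,\nu\rangle^2}{h_{p_k}h_{p_l}u_{kl}} .
\]
The numerator is at most $|Dh|^2=1$ on $\partial\tilde\Omega$. It therefore suffices to bound $h_{p_k}h_{p_l}u_{kl}=u_{\beta\beta}$ from below on $\partial\Omega$.

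\emph{Step 3 (lower bound for $u_{\beta\beta}$).} Apply the same identity in the other direction: $u_{\beta\beta}=\langle\beta,\nu\rangle^2/(u^{ij}\nu_i\nu_j)$. Proposition \ref{llll3.4} gives $\langle\beta,\nu\rangle\ge 1/C_{13}$. It remains to bound $u^{ij}\nu_i\nu_j\le \lambda_{\min}(D^2u)^{-1}$, that is, to bound $\lambda_{\min}(D^2u)$ from below.

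For this, combine Lemma \ref{lem4.1} with the equation. We have $D^2u\le C_{17}I$, and $\det D^2u\ge c>0$ comes from $\kappa_1\cdots\kappa_n=\det D^2u/v^{n+2}$ together with lower bounds on the $\kappa_i$. Those lower bounds are the delicate point, because $F=\sum\kappa_i\ge\Lambda_1$ alone does not keep each $\kappa_i$ away from zero.

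\emph{Main obstacle and fallback.} I expect this last step to be the real difficulty. To avoid needing a full lower eigenvalue bound, I would instead derive $u_{\beta\beta}\ge c$ directly. Differentiate $h(Du)=0$ along $\partial\Omega$; then $u_{\beta\beta}=\partial_\beta(h(Du))$, which is positive because $h(Du)>0$ inside $\Omega$ and vanishes on $\partial\Omega$. To make this quantitative, apply the maximum principle to $h(Du)$ compared with $\epsilon\tilde h$ near $\partial\Omega$, using the operator $\mathcal L$ and the concavity of $h$ as in Lemma \ref{vnestimate}. One has
\[
\mathcal L h(Du)=h_{p_kp_l}G_{ij}u_{ik}u_{jl}+h_{p_k}f_k .
\]
The first term is $\le -\theta\,G_{ij}u_{ik}u_{jk}$, and the second is small by \eqref{Dxf}. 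So $h(Du)\ge\epsilon\tilde h$ follows once $\epsilon$ is small. This gives the Hopf-type bound $\partial_\nu h(Du)\ge\epsilon$, and then
\[
u_{\beta\beta}=\langle\beta,\nu\rangle^{-1}\,\partial_\nu h(Du)\ge \epsilon ,
\]
using the tangential vanishing. Together with Step 2 this yields $\tilde u_{\tilde\beta\tilde\beta}\le C_{24}$ with $C_{24}$ depending only on $\Omega,\tilde\Omega,f$.
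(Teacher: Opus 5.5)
Your argument is correct after one small fix. It takes a different, though closely related, route from the paper.

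\textbf{Comparison with the paper.} The paper works on the dual side. In $\tilde\Omega$ it uses the barrier $\tilde\Psi=-\tilde h(D\tilde u)+C_0h$ with $\tilde{\mathcal L}=\tilde G_{ij}\partial_{ij}+f_i\partial_i$. It bounds $\tilde{\mathcal L}(-\tilde h(D\tilde u))\le C_{25}\mathcal T_{\tilde G}$ using Corollary \ref{c3.4} and the bound on $\tilde G_{y_k}$, and it uses $\tilde{\mathcal L}h\le-\frac{\theta}{2}\mathcal T_{\tilde G}$. It then reads off $\tilde u_{\tilde\beta\tilde\beta}=\partial_{\tilde\beta}[\tilde h(D\tilde u)]\le C_0\,\partial_{\tilde\beta}h$ directly.

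For $y=Du(x)$ one has $\tilde h(D\tilde u(y))=\tilde h(x)$ and $h(y)=h(Du(x))$. So the paper's inequality $\tilde h(D\tilde u)\le C_0h$ is exactly your $h(Du)\ge\epsilon\tilde h$ with $\epsilon=1/C_0$. What differs is how it is proved and used:
\begin{itemize}
\item You work with the primal operator $\mathcal L$, where $\mathcal Lu_k=f_k$ makes the computation cleaner. There is no $\tilde G_{y_k}$ term, and the Hessian term has the good sign, so you need only a lower bound on $G_{ij}u_{ik}u_{jk}$ rather than an upper bound on $\tilde G_{ij}\tilde u_{ik}\tilde u_{jk}$.
\item In exchange, you need an algebraic identity to turn $\partial_\nu h(Du)$ into $u^{ij}\nu_i\nu_j$. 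The paper avoids this by differentiating along $\tilde\beta$ on the dual side.
\end{itemize}

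\textbf{Points to tighten.}
\begin{itemize}
\item Make the needed lower bound explicit: it is $G_{ij}u_{ik}u_{jk}\ge\Lambda_6$ from \eqref{e3.5}. Your phrase ``small by \eqref{Dxf}'' works only because \eqref{Dxf} is calibrated against $\Lambda_6$.
\item Run the comparison on all of $\Omega$ rather than near $\partial\Omega$. Both functions vanish on $\partial\Omega$, whereas on a collar you would need a lower bound for $h(Du)$ on the inner boundary.
\item Dropping the eigenvalue route was the right call. Lemma \ref{lem3.6} derives the lower bound on $D^2u$ from the $D^2\tilde u$ bound, which rests on this lemma, so that route would be circular.
\end{itemize}

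\textbf{The fix.} It concerns your last formula. Since $D[h(Du)]=D^2u\,\beta$ is normal to $\partial\Omega$, you have $D^2u\,\beta=(\partial_\nu h(Du))\,\nu$ there. Hence $u_{\beta\beta}=\langle\beta,\nu\rangle\,\partial_\nu h(Du)$, not $\langle\beta,\nu\rangle^{-1}\partial_\nu h(Du)$. Applying $(D^2u)^{-1}$ to the same relation gives $\langle\beta,\nu\rangle=\partial_\nu h(Du)\,u^{ij}\nu_i\nu_j$. Using $\langle\beta,\nu\rangle\le|Dh|=1$, this yields
\[
\tilde u_{\tilde\beta\tilde\beta}=u^{ij}\nu_i\nu_j=\frac{\langle\beta,\nu\rangle}{\partial_\nu h(Du)}\le\frac{1}{\epsilon}.
\]
So the conclusion stands, and neither Proposition \ref{llll3.4} nor the identity of Step 2 is actually needed.
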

\begin{proof}
Let $\tilde{x}_0\in\partial\tilde{\Omega}$, such that $\tilde{u}_{\tilde{\beta}\tilde{\beta}}(\tilde{x}_0)=\max_{\partial\Omega}\tilde{u}_{\tilde{\beta}\tilde{\beta}}$.
To estimate the upper bound of $\tilde{u}_{\tilde{\beta}\tilde{\beta}}$,
we consider the barrier function
$$\tilde\Psi(y):=-\tilde{h}(D\tilde{u}(y))+C_0h(y).$$

 Combining the uniformly concavity of $h$ with \eqref{Dxf},  we get 
\begin{align*}
	\mathcal{\tilde{L}}(h)&=\tilde{G}_{ij}h_{ij}+f_ih_{i}\leq -\theta\mathcal{T}_{\tilde{G}}+f_ih_{y}\leq -\frac{\theta}{2}\mathcal{T}_{\tilde{G}}.
\end{align*}

Using the equations (\ref{e2.17}), a direct computation shows that
\begin{equation}\label{e3.39}
\begin{aligned}
\mathcal{\tilde{L}}\left(-\tilde{h}(D\tilde{u})\right)&=\tilde{G}_{ij}\left(-\tilde{h}_{\tilde{p}_{k}\tilde{p}_{l}}
\partial_{ki}\tilde{u}\partial_{lj}\tilde{u}\right)-\tilde{h}_{\tilde{p}_{k}}(\tilde{G}_{y_{k}}+f_{p_k})\\
&\leq C_{25}\mathcal{T}_{\tilde{G}},
\end{aligned}
\end{equation}
where we use the estimates (\ref{e3..3.3})-(\ref{e3..3.4}) in Corollary \ref{c3.4}.
Therefore, we obtain
$$\tilde {\mathcal{L}}\tilde\Psi(y)\leq \left(C_{25}-\frac{C_0\theta}{2}\right)\mathcal{T}_{\tilde{G}}. $$
Let \begin{align*}
C_0=\frac{2C_{25}}{\theta},
\end{align*}
we get $\tilde {\mathcal{L}}\tilde\Psi(y)\leq 0~\text{in}~\tilde{\Omega}$.

For any $y\in \partial\tilde{\Omega}$, $D\tilde{u}(y)\in \partial\Omega$, then $\tilde{h}(D\tilde{u})=0$. It is clear that $h=0$ on $\partial\tilde{\Omega}$. 
Applying the maximum principle, we get
$$\tilde\Psi(y)\geq 0,\quad\quad y\in \tilde{\Omega}.$$
Combining it with $\tilde\Psi(\tilde{x}_0)=0$, we obtain $\tilde\Psi_{\tilde{\beta}}(\tilde{x}_0)\geq 0$, which implies
$$\frac{\partial \tilde{h}}{\partial \tilde{\beta}}(D\tilde{u}(\tilde{x}_0))\leq C_0.$$
On the other hand, we see that at $\tilde{x}_0$,
$$\frac{\partial \tilde{h}}{\partial \tilde{\beta}}=\langle D\tilde{h}(D\tilde{u}),\tilde{\beta}\rangle=\frac{\partial \tilde{h}}{\partial p_k}\tilde{u}_{kl}\tilde{\beta}^l=\tilde{\beta}^k\tilde{u}_{kl}\tilde{\beta}^l=\tilde{u}_{\tilde{\beta}\tilde{\beta}}.$$
Therefore, letting $C_{24}= C_{0}$ we have
$$\tilde{u}_{\tilde{\beta}\tilde{\beta}}=\frac{\partial \tilde{h}}{\partial \tilde{\beta}}\leq C_{24}.$$
\end{proof}

To estimate the  double tangential derivative of $\tilde{u}$, we need the following Lemma, which proof is simimalr to \eqref{e3.16}.
\begin{lemma}\label{Lphistimate}
	If $\tilde{u}$
	is a strictly convex solution to \eqref{e2.17} and $f$ satisfy \eqref{oscf}, \eqref{Dxf}, respectively. Fix a smooth function $\eta: \tilde{\Omega} \times \Omega  \to \mathbb{R}$  defined by $\varphi=\eta(\tilde{x},
	D\tilde{u})$, it follows that
	$$\tilde{\mathcal{L}}\varphi\leq C_{26}\mathcal{{T}}_{\tilde{G}},$$
	where $C_{26}$ depends only on $n,  \Omega, \tilde{\Omega},f$ and $\|\eta\|_{C^2(\overline{\Omega}\times \overline{\tilde{\Omega}})}$.
\end{lemma}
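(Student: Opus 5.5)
We compute $\tilde{\mathcal{L}}\varphi$ directly by the chain rule, exactly as in the derivation of \eqref{e3.16}, and bound each term using the structure conditions of Corollary \ref{c3.4}. Write $\varphi(y)=\eta(y,D\tilde u(y))$, with $\eta=\eta(y,q)$, $y\in\tilde\Omega$, $q\in\Omega$. Differentiating,
\begin{align*}
\varphi_i&=\eta_{y_i}+\eta_{q_k}\tilde u_{ki},\\
\varphi_{ij}&=\eta_{y_iy_j}+\eta_{y_iq_k}\tilde u_{kj}+\eta_{y_jq_k}\tilde u_{ki}+\eta_{q_kq_l}\tilde u_{ki}\tilde u_{lj}+\eta_{q_k}\tilde u_{kij}.
\end{align*}
Then
\[
\tilde{\mathcal{L}}\varphi=\tilde G_{ij}\eta_{y_iy_j}+2\tilde G_{ij}\eta_{y_iq_k}\tilde u_{kj}+\tilde G_{ij}\eta_{q_kq_l}\tilde u_{ki}\tilde u_{lj}+\eta_{q_k}\tilde{\mathcal L}\tilde u_k+f_i\eta_{y_i}.
\]
The identity $\tilde{\mathcal L}\tilde u_k=-f_{p_k}-\tilde G_{y_k}$ was already used before \eqref{e3.16}. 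By the bound $|\tilde G_{y_i}|\le\Lambda_5$, the term $\eta_{q_k}\tilde{\mathcal L}\tilde u_k$ is bounded by a constant depending on $\|\eta\|_{C^1}$, $\Lambda_5$ and $\|f\|_{C^1}$. The term $f_i\eta_{y_i}$ is likewise bounded by a constant.

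The second-order terms are handled as follows. First, $\tilde G_{ij}\eta_{y_iy_j}\le C\mathcal T_{\tilde G}$. Next, diagonalize $D^2\tilde u$ at the point, with eigenvalues $\mu_i$, so that $\tilde G_{ij}$ becomes diagonal in the eigenvalue sense. Then $\tilde G_{ij}\eta_{q_kq_l}\tilde u_{ki}\tilde u_{lj}\le \|\eta\|_{C^2}\sum_i \frac{\partial\tilde G}{\partial\mu_i}\mu_i^2$. Strictly, $\tilde G_{ij}=\sqrt{1-|y|^2}\,\tilde b\tilde F\tilde b$, and the $\tilde b$ factors have eigenvalues between $\sigma_3,\sigma_4$, so after adjusting constants one still gets $C\,\tilde G_{ij}\tilde u_{ik}\tilde u_{jk}$. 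This last quantity is bounded by $\Lambda_9$ using \eqref{e3..3.4}. For the mixed term we use Lemma \ref{l3.30} (or Cauchy--Schwarz):
\[
2\tilde G_{ij}\eta_{y_iq_k}\tilde u_{kj}\le \tilde G_{ij}\tilde u_{ik}\tilde u_{jk}+C\mathcal T_{\tilde G}.
\]

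Collecting terms gives $\tilde{\mathcal L}\varphi\le C\mathcal T_{\tilde G}+C'$, where $C'$ depends on $\Lambda_5$, $\Lambda_9$, $f$ and $\|\eta\|_{C^2}$. To absorb the constant $C'$ into $\mathcal T_{\tilde G}$, we use the lower bound $\mathcal T_{\tilde G}\ge\sigma_3\tilde{\mathcal T}\ge\sigma_3\Lambda_8$ from \eqref{e2.2.19} and \eqref{e3..3.3}. This gives $C'\le \frac{C'}{\sigma_3\Lambda_8}\mathcal T_{\tilde G}$, which yields the claim with $C_{26}=C+C'/(\sigma_3\Lambda_8)$.

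The main obstacle is the quadratic term in $D^2\tilde u$. An upper bound on $\tilde G_{ij}\tilde u_{ik}\tilde u_{jk}$ (the inequality \eqref{e3..3.4}) is essential here. That is exactly what distinguishes this computation from a naive one, in which $|D^2\tilde u|$ is not yet controlled. We therefore rely on Corollary \ref{c3.4}, together with the comparability of $\tilde G_{ij}$ and $\tilde F_{ij}$ via $\tilde b$, rather than on any second-derivative bound.
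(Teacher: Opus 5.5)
Your proposal is correct and is essentially the argument the paper intends: expand $\tilde{\mathcal L}\varphi$ by the chain rule as in \eqref{e3.16}, use $\tilde{\mathcal L}\tilde u_k=-f_{p_k}-\tilde G_{y_k}$, bound the quadratic term through the upper bound \eqref{e3..3.4} as in the proof of \eqref{e3.39} (rather than by a sign, which is unavailable for a general $\eta$), handle the mixed term by Cauchy--Schwarz, and absorb constants via the lower bound \eqref{e3..3.3} on $\mathcal T_{\tilde G}$. No diagonalization is needed for the quadratic term: $[\tilde G_{ij}]\ge 0$ gives $\tilde G_{ij}\eta_{q_kq_l}\tilde u_{ki}\tilde u_{lj}\le \|\eta\|_{C^2}\,\tilde G_{ij}\tilde u_{ik}\tilde u_{jk}$ directly, and by \eqref{GeqS} this last quantity equals $\sum_i s_{ii}$, which is bounded because $\tilde\Omega\subset\subset B_1(0)$.
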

Then, we can get
\begin{lemma}\label{lem4.5}
If $\tilde{u}$ is a strictly convex solution of \eqref{e2.17} and $f \in \mathcal{A}$ satisfy \eqref{oscf}, \eqref{Dxf}, \eqref{Dxpf}, then there exists a positive constant $C_{27}$ depending only on $u_0$, $\Omega$, $\tilde{\Omega}$ and $f$, such that
\begin{equation*}\label{e4.43}
   \max_{\partial\tilde{\Omega}} \tilde{u}_{\tilde\varsigma\tilde\varsigma} \leq C_{27}.
\end{equation*}
\end{lemma}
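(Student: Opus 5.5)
Following the standard argument of Urbas \cite{Ju2} and the analogous steps in \cite{Wang2023}, we bound the double tangential derivative at its boundary maximum with a barrier and the maximum principle. Define
\[
M:=\max_{y\in\partial\tilde\Omega,\ \varsigma\in T_y\partial\tilde\Omega,\ |\varsigma|=1}\tilde u_{\varsigma\varsigma},
\]
attained at $y_0\in\partial\tilde\Omega$ in the direction $\varsigma_0$; after a rotation $\varsigma_0=e_1$. We may assume $M\ge 1$. By \eqref{e4.4.5}, \eqref{intobod} and Lemma \ref{lem4.3}, every second derivative of $\tilde u$ in $\overline{\tilde\Omega}$ is then bounded by $C(1+M)$.

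\textbf{Step 1: a lower bound for $\tilde u_{11}$ on the boundary.} At a boundary point $y$, split $e_1$ into a tangential part and a part along $\tilde\beta$, exactly as in the decomposition preceding \eqref{e4.4.5}. Using \eqref{e3.35}, Lemma \ref{lem4.3} and the strict obliqueness \eqref{e3.3.12} (recall $\langle\tilde\beta,\tilde\nu\rangle=\langle\beta,\nu\rangle$), this gives
\[
\tilde u_{11}(y)\le |\varsigma(e_1)|^2M+\frac{\langle\tilde\nu,e_1\rangle^2}{\langle\tilde\beta,\tilde\nu\rangle^2}\tilde u_{\tilde\beta\tilde\beta},
\]
and expanding $|\varsigma(e_1)|^2$ as in \eqref{e3.36} yields
\[
\tilde u_{11}(y)\le M\Bigl(1-2\langle\tilde\nu,e_1\rangle\frac{\langle\tilde\beta^T,e_1\rangle}{\langle\tilde\beta,\tilde\nu\rangle}\Bigr)+C\langle\tilde\nu,e_1\rangle^2(1+M).
\]
This can be rewritten as
\[
\frac{\tilde u_{11}}{M}+2\langle\tilde\nu,e_1\rangle\frac{\langle\tilde\beta^T,e_1\rangle}{\langle\tilde\beta,\tilde\nu\rangle}\le 1+C|y-y_0|^2
\]
on $\partial\tilde\Omega$ near $y_0$, since $\langle\tilde\nu(y),e_1\rangle=O(|y-y_0|)$. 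Equality holds at $y_0$. The second term on the left is of the form $\eta(y,D\tilde u)$ for a smooth $\eta$, because $\tilde\beta=D\tilde h(D\tilde u)$.

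\textbf{Step 2: the barrier.} Set
\[
\Psi:=\frac{\tilde u_{11}}{M}+\eta(y,D\tilde u)-1-A|y-y_0|^2-Bh(y).
\]
This is $\le0$ on $\partial\tilde\Omega\cap B_\rho(y_0)$ for $A$ large. On $\tilde\Omega\cap\partial B_\rho(y_0)$ it is also $\le0$ once $A\rho^2$ dominates $C(1+M)/M+C\le C'$, since $M\ge1$. For the interior inequality we use three ingredients. First, the computation in Lemma \ref{lem4.2} (concavity of $f$ in $x$, \eqref{e4.4.2}, and the concavity of $\tilde G$ via \eqref{GeqS}) gives $\tilde{\mathcal L}\tilde u_{11}\ge -C-C|D_{xp}f|(1+M)$, hence $\tilde{\mathcal L}(\tilde u_{11}/M)\ge -C$. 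Second, Lemma \ref{Lphistimate} gives $\tilde{\mathcal L}\eta\ge -C\mathcal T_{\tilde G}$; its proof runs the same way as \eqref{e3.16}, with the sign handled using the $\tilde G_{ij}\tilde u_{ik}\tilde u_{jk}$ term, which is bounded by Corollary \ref{c3.4}. Third, $\tilde{\mathcal L}h\le-\frac\theta2\mathcal T_{\tilde G}$ by \eqref{Dxf}. Together with the lower bound $\mathcal T_{\tilde G}\ge\Lambda_8\sigma_3$, choosing $B$ large gives $\tilde{\mathcal L}\Psi\ge0$. The maximum principle then yields $\Psi\le0$ in $\tilde\Omega\cap B_\rho(y_0)$, and since $\Psi(y_0)=0$ we get $\partial_{\tilde\nu}\Psi(y_0)\le0$. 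This produces
\[
\tilde u_{11\tilde\nu}(y_0)\le CM.
\]

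\textbf{Step 3: closing the estimate, the main obstacle.} Differentiate the boundary relation $\tilde h(D\tilde u)=0$ twice along the boundary in the $e_1$ direction at $y_0$:
\[
\tilde h_{p_k}\tilde u_{k11}+\tilde h_{p_kp_l}\tilde u_{k1}\tilde u_{l1}+(\text{terms bounded by }C(1+M)\text{ from the boundary curvature})=0.
\]
Uniform concavity of $\tilde h$ gives $-\tilde h_{p_kp_l}\tilde u_{k1}\tilde u_{l1}\ge\tilde\theta\tilde u_{11}^2=\tilde\theta M^2$, so $\tilde u_{11\tilde\beta}\ge\tilde\theta M^2-C(1+M)$. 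Writing $\tilde\beta=\langle\tilde\beta,\tilde\nu\rangle\tilde\nu+\tilde\beta^T$, the tangential component $\tilde u_{11\tilde\beta^T}$ is controlled by tangentially differentiating $\tilde u_{11}$ at the boundary maximum. Here the $\eta$ correction built into $\Psi$ is exactly what makes this work: it compensates for the fact that $\tilde u_{11}$ restricted to the boundary is not itself maximal at $y_0$. Combining this with \eqref{e3.3.12} and Step 2 gives $\tilde\theta M^2\le C(1+M)$, hence $M\le C_{27}$.

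The delicate part is keeping the constants in $\tilde{\mathcal L}(\tilde u_{11}/M)$ independent of $M$. This is where the smallness \eqref{Dxpf} of $D_{xp}f$ and the concavity of $f$ are used, through Lemma \ref{lem4.2}.
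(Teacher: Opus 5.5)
Your proposal is correct and is essentially the paper's proof. It uses the same splitting of $e_1$ into a tangential part and a $\tilde\beta$-part on $\partial\tilde\Omega$, and the same barrier: your $\Psi$ is the paper's $-\tilde\Upsilon$, with $C_{28}\langle\tilde\nu,e_1\rangle^2$ absorbed into $A|y-y_0|^2$. It reaches the same conclusion $\tilde\theta M^2-C(1+M)\le\tilde u_{11\tilde\beta}(y_0)\le CM$ by differentiating $\tilde h(D\tilde u)=0$ twice along $\partial\tilde\Omega$, and you even keep the bounded boundary-curvature term that the paper drops.

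The one variation is the direction of differentiation. You differentiate $\Psi$ along $\tilde\nu$ and recover the $\tilde\beta^T$-component from tangential maximality, whereas the paper differentiates directly along the inward oblique direction $\tilde\beta$. On your route, the bound $CM$ (rather than $CM(1+M)$) depends on $\partial_p\eta(y_0,\cdot)=0$. This holds because $\langle\tilde\nu(y_0),e_1\rangle=0$, and you should state it explicitly. Along $\tilde\beta$ the bound is automatic, since $D^2\tilde u\,\tilde\beta$ is bounded on $\partial\tilde\Omega$ by \eqref{e3.35} and Lemma \ref{lem4.3}.
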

\begin{proof}
We assume that $\tilde{x}_{0}\in\partial\Omega$, $e_{n}$ is the unit inward normal vector of $\partial\tilde{\Omega}$ at $\tilde{x}_0$ and $e_{1}$ is the tangential vector of $\partial\tilde{\Omega}$ at $\tilde{x}_0$ respectively, such that
$$ \max_{\partial\tilde{\Omega}} \tilde{u}_{\tilde\varsigma\tilde\varsigma}=\tilde{u}_{11}(\tilde{x}_0)=:\mathcal{ M}.$$
For any $y\in \partial\tilde{\Omega}$, it follows from the proof of (\ref{e3.36}) and (\ref{e4.4.5}) that
\begin{equation}\label{e4.45}
\begin{aligned}
\tilde u_{\tilde\xi\tilde\xi}&=|\tilde\varsigma(\tilde\xi)|^2\tilde u_{\tilde\varsigma\tilde\varsigma}+ \frac{\langle \tilde\nu,\tilde\xi\rangle^2}{\langle \tilde\beta,\tilde\nu\rangle^2}\tilde u_{\tilde\beta\tilde\beta}\\
          &\leq \left(1+C_{28}\langle \tilde\nu,\tilde\xi\rangle^2-2\langle \tilde\nu,\tilde\xi\rangle \frac{\langle \tilde\beta^T,\tilde\xi\rangle}{\langle \tilde\beta,\tilde\nu\rangle}\right) \mathcal{M}
                 + \frac{\langle \tilde\nu,\tilde\xi\rangle^2}{\langle \tilde\beta,\tilde\nu\rangle^2}\tilde u_{\tilde\beta\tilde\beta}.
\end{aligned}
\end{equation}

Without loss of generality, we assume that $\mathcal{M} \geq 1$. Thus by (\ref{e3.3.12}), (\ref{e4 4.6}) and (\ref{e4.45}) we deduce that
\begin{equation*}\label{eq3.9a}
  \frac{\tilde u_{\tilde\xi\tilde\xi}}{\mathcal {M}}+2\langle \tilde\nu,\tilde\xi\rangle \frac{\langle \tilde\beta^T,\tilde\xi\rangle}{\langle \tilde\beta,\tilde\nu\rangle}
      \leq 1+C_{28}\langle \tilde\nu,\tilde\xi\rangle^2.
\end{equation*}
Let $\tilde\xi=e_1$, then
\begin{equation*}\label{eq3.10a}
  \frac{\tilde u_{11}}{\mathcal{ M}}+2\langle \tilde\nu,e_1\rangle \frac{\langle \tilde\beta^T,e_1\rangle}{\langle \tilde\beta,\tilde\nu\rangle}
             \leq 1+C_{28}\langle \tilde\nu,e_1\rangle^2.
\end{equation*}
We see that the function
\begin{equation*}\label{eq3.11a}
\tilde w:=A|y-\tilde x_0|^2-\frac{\tilde u_{11}}{\mathcal{ M}}-2\langle \tilde\nu,e_1\rangle \frac{\langle \tilde\beta^T,e_1\rangle}{\langle \tilde\beta,\tilde\nu\rangle}+C_{28}\langle \tilde\nu,e_1\rangle^2+1
\end{equation*}
satisfies
$$\tilde w|_{\partial\tilde\Omega}\geq 0,\quad  \tilde w(\tilde x_0)=0.$$
Denote a neighborhood of $\tilde{x}_0$ in $\tilde\Omega$ by
$$\tilde\Omega_{r}:=\tilde\Omega\cap B_{r}(\tilde{x}_0),$$
where $0<r<1$ is a positive constant such that $\tilde\nu$ is well defined in $\tilde\Omega_{r}$.
Let us consider
$$-2\langle \tilde\nu,e_1\rangle \frac{\langle \tilde\beta^T,e_1\rangle}{\langle \tilde\beta,\tilde\nu\rangle}+C_{28}\langle \tilde\nu,e_1\rangle^2+1$$
as a known function depending on $y$ and $D\tilde u$. Then by the proof of (\ref{e3.39}), we also obtain
\begin{equation*}
\tilde{\mathcal{L}} \left(-2\langle \tilde\nu,e_1\rangle \frac{\langle \tilde\beta^T,e_1\rangle}{\langle \tilde\beta,\tilde\nu\rangle}+C_{28}\langle \tilde\nu,e_1\rangle^2+1\right)\leq C_{29}\mathcal{T}_{\tilde{G}}.
\end{equation*}
It follows from  in Lemmas \ref{lem4.2}, \ref{lem4.3}, \eqref{e4.4.5} and the concavity of $f$ in $x$ that
\begin{align*}
\tilde{\mathcal{L}}\tilde{u}_{11}&=-\tilde{G}_{ij,rs}\tilde{u}_{ij1}\tilde{u}_{rs1}-2\tilde{G}_{ij,y_1}\tilde{u}_{ij1}-f_{ij}\tilde{u}_{i1}\tilde{u}_{j1}-2f_{ip_1}\tilde{u}_{i1}-f_{p_1p_1}\\
&\geq -2C_{21}^2C_{19}-2\max_{\overline{\Omega}\times\overline{\tilde{\Omega}}}|D_{xp}f|\sup_{\tilde{\Omega}}|D^2\tilde{u}|-\max_{\overline{\Omega}\times\overline{\tilde{\Omega}}}|D_{pp}f|\\
&\geq -2\max_{\overline{\Omega}\times\overline{\tilde{\Omega}}}|D_{xp}f|\left(C_{20}+2C_{23}(C_{24}+\mathcal{M})\right)-C_{30}\\
&\geq -C_{31}\mathcal{ M}\mathcal{T}_{\tilde{G}},
\end{align*}
where $C_{30}=2C_{21}^2C_{19}+\max_{\overline{\Omega}\times\overline{\tilde{\Omega}}}|D_{pp}f|$.
We set
$$\tilde\Upsilon:=\tilde w+\tilde{C_0}h.$$
Furthermore, by(\ref{e3.3.12}), (\ref{e3.3.14}), (\ref{e4.4.5}) and (\ref{e4 4.6}),  we can choose the constant $A$ large enough such that
$$\tilde w|_{\tilde\Omega \cap \partial B_{r}(\tilde x_0)} \geq 0. $$
As in the proof in Lemma \ref{lem4.3},
\begin{align*}
    \tilde{\mathcal{L}}\Upsilon
   & \leq 2A\mathcal{T}_{\tilde{G}}-2Af_i(y_i-x_{0i})-\frac{\tilde{L}\tilde{u}_{11}}{\mathcal{ M}}+C_{29}\mathcal{T}_{\tilde{G}}-\frac{\tilde{C_0}\theta}{2}\mathcal{T}_{\tilde{G}}\\
   &\leq  2A\mathcal{T}_{\tilde{G}}-2Af_i(y_i-x_{0i})+C_{31}\mathcal{T}_{\tilde{G}}+C_{29}\mathcal{T}_{\tilde{G}}-\frac{\tilde{C_0}\theta}{2}\mathcal{T}_{\tilde{G}}\\
   &\leq \left(2A+C_{32}+C_{31}+C_{29}-\frac{\tilde{C_0}\theta}{2}\right)\mathcal{T}_{\tilde{G}},
\end{align*}
 where we use \eqref{Dxf} for the first inequality and $C_{32}=\frac{2A}{\Lambda_8}\max_{\overline{\Omega}\times \overline{\tilde{\Omega}}}|D_xf|$. Then, by choosing $\tilde{C_0}=\frac{4A+2C_{32}+2C_{31}+2C_{29}}{\theta}$, we get that
 \begin{align*}
     \mathcal{\tilde{L}}\tilde\Upsilon\leq 0,\quad y\in \tilde\Omega_{r}.
 \end{align*}

A standard barrier argument makes conclusion of
$$\tilde\Upsilon_{\tilde\beta}(\tilde x_0)\geq0.$$
Therefore,
\begin{equation}\label{eq3.12a}
 \tilde u_{11\tilde\beta}(\tilde x_0)\leq C_{33}\mathcal{ M}.
\end{equation}

On the other hand, differentiating $\tilde h(D\tilde u)$ twice in the direction $e_1$ at $\tilde x_0$, we have
$$\tilde h_{p_k}\tilde u_{k11}+\tilde h_{p_kp_l}\tilde u_{k1}\tilde u_{l1}=0.$$
The concavity of $\tilde h$ yields that
$$\tilde h_{p_k}\tilde u_{k11}=-\tilde h_{p_kp_l}\tilde u_{k1}\tilde u_{l1}\geq \tilde\theta \mathcal{M}^2.$$
Combining it with $\tilde h_{p_k}\tilde u_{k11}=\tilde u_{11\tilde\beta}$ and using (\ref{eq3.12a}), we obtain
$$\tilde\theta  \mathcal{M}^2\leq C_{33}\mathcal{M}.$$
Then we get the upper bound of $\mathcal{M}=\tilde u_{11}(\tilde x_0)$ and thus the desired result follows.
\end{proof}
By Lemma \ref{lem4.5}, Lemma \ref{lem4.5} and (\ref{e4.4.5}), we obtain the $C^2$ a-priori estimate of $\tilde{u}$ on the boundary.
\begin{lemma}\label{lem4.6}
If $\tilde{u}$ is a strictly convex solution of \eqref{e2.17} and $f \in \mathcal{A}$ satisfy \eqref{oscf}, \eqref{Dxf}, \eqref{Dxpf}, then there exists a positive constant $C_{34}$ depending only on $n$, $\Omega$, $\tilde{\Omega}$ and $f$, such that
\begin{equation*}\label{eq3.13a}
\max_{\partial\tilde\Omega}|D^2\tilde u| \leq C_{34}.
\end{equation*}
\end{lemma}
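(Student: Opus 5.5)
The plan is to assemble the boundary bound from the three ingredients already available on $\partial\tilde\Omega$: the decomposition \eqref{e4.4.5}, the bound of Lemma \ref{lem4.3} for $\tilde u_{\tilde\beta\tilde\beta}$, and the bound of Lemma \ref{lem4.5} for $\tilde u_{\tilde\varsigma\tilde\varsigma}$. Since $\tilde u$ is strictly convex, $D^2\tilde u$ is positive definite. Hence $|D^2\tilde u|$ at a boundary point is controlled, up to a dimensional factor, by $\max_{|\tilde\xi|=1}\tilde u_{\tilde\xi\tilde\xi}$. It therefore suffices to bound $\tilde u_{\tilde\xi\tilde\xi}$ from above uniformly in the unit vector $\tilde\xi$ and in the boundary point.

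Fix $y\in\partial\tilde\Omega$ and a unit vector $\tilde\xi$. We split $\tilde\xi=\tilde\varsigma(\tilde\xi)+\frac{\langle\tilde\nu,\tilde\xi\rangle}{\langle\tilde\beta,\tilde\nu\rangle}\tilde\beta$ with $\tilde\varsigma(\tilde\xi)$ tangential. The mixed term vanishes by \eqref{e3.35}, since differentiating $\tilde h(D\tilde u)=0$ tangentially gives $\tilde u_{\tilde\beta\tilde\varsigma}=0$. This yields
\begin{equation*}
\tilde u_{\tilde\xi\tilde\xi}=|\tilde\varsigma(\tilde\xi)|^2\tilde u_{\tilde\varsigma\tilde\varsigma}+\frac{\langle\tilde\nu,\tilde\xi\rangle^2}{\langle\tilde\beta,\tilde\nu\rangle^2}\tilde u_{\tilde\beta\tilde\beta},
\end{equation*}
where $\tilde\varsigma$ denotes the normalized tangential direction.

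We then bound each factor. The strict obliqueness estimate of Proposition \ref{llll3.4} gives $\langle\tilde\beta,\tilde\nu\rangle=\langle\beta,\nu\rangle\geq 1/C_{13}$. Together with $|\tilde\beta|=|D\tilde h|$ being bounded, this gives $|\tilde\varsigma(\tilde\xi)|^2\leq C_{22}$ by \eqref{e3.36}, and also $\langle\tilde\nu,\tilde\xi\rangle^2/\langle\tilde\beta,\tilde\nu\rangle^2\leq C_{13}^2$. Lemma \ref{lem4.3} bounds $\tilde u_{\tilde\beta\tilde\beta}\le C_{24}$. Lemma \ref{lem4.5} bounds $\tilde u_{\tilde\varsigma\tilde\varsigma}\le C_{27}$ for every unit tangential direction, and in particular for the normalized $\tilde\varsigma$.

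Hence $0<\tilde u_{\tilde\xi\tilde\xi}\le C_{22}C_{27}+C_{13}^2C_{24}$. Taking the maximum over $\tilde\xi$ and over $y\in\partial\tilde\Omega$, and using positivity of $D^2\tilde u$ to pass from the largest eigenvalue to the full norm, gives $\max_{\partial\tilde\Omega}|D^2\tilde u|\le C_{34}$. The constant $C_{34}$ depends only on $n,\Omega,\tilde\Omega,f$ through $C_{13},C_{22},C_{24},C_{27}$.

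There is no real obstacle here: the hard analytic work is in Lemmas \ref{lem4.3} and \ref{lem4.5} and in the obliqueness estimate. The only point needing care is that the hypotheses \eqref{oscf}, \eqref{Dxf}, \eqref{Dxpf} and $f\in\mathscr A$ are exactly those required by Proposition \ref{llll3.4} and Lemmas \ref{lem4.3} and \ref{lem4.5}, so all three can be invoked simultaneously. The degenerate case $\langle\tilde\nu,\tilde\xi\rangle=0$ poses no problem, since then $\tilde\xi=\tilde\varsigma(\tilde\xi)$ is tangential and only the bound from Lemma \ref{lem4.5} is needed.
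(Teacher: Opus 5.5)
Your argument is correct and is essentially the paper's proof. The paper gets the boundary bound by combining the decomposition \eqref{e4.4.5} with the bound on $\tilde u_{\tilde\beta\tilde\beta}$ from Lemma \ref{lem4.3} and on $\tilde u_{\tilde\varsigma\tilde\varsigma}$ from Lemma \ref{lem4.5}; that decomposition already uses the vanishing mixed term \eqref{e3.35} and the obliqueness estimate \eqref{e3.3.12}, and your use of positivity of $D^2\tilde u$ to pass from directional upper bounds to the full norm is the step the paper leaves implicit.
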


By Lemma \ref{lem4.3} and Lemma \ref{lem4.6}, we can see that
\begin{lemma}\label{lem4.7}
If $\tilde{u}$ is a strictly convex solution of \eqref{e2.17} and $f \in \mathcal{A}$ satisfy \eqref{oscf}, \eqref{Dxf}, \eqref{Dxpf}, then there exists a positive constant $C_{35}$ depending only on $n$, $\Omega$, $\tilde{\Omega}$ and $f$, such that
\begin{equation*}\label{eq3.14a}
\max_{\bar{\tilde\Omega}}|D^2\tilde u| \leq C_{35}.
\end{equation*}
\end{lemma}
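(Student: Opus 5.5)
The plan is to combine the interior reduction of Lemma \ref{lem4.2} with the boundary bound of Lemma \ref{lem4.6}, and then absorb the small term that the smallness condition \eqref{Dxpf} leaves on the right-hand side.

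First, I will revisit the proof of Lemma \ref{lem4.2} rather than only its stated conclusion \eqref{intobod}. The maximum principle argument there gives, for each $k$,
\begin{equation*}
\sup_{\tilde{\Omega}}\tilde{u}_{kk}\leq \max_{\partial\tilde{\Omega}}|D^2\tilde{u}|+\frac{C_{20}}{2}+\frac{4}{\Lambda_8\theta}\max_{\overline{\tilde{\Omega}}}|h|\max_{\overline{\Omega}\times\overline{\tilde{\Omega}}}|D_{xp}f|\sup_{\tilde{\Omega}}|D^2\tilde{u}|.
\end{equation*}
By the rotation made at the start of that proof, this bound holds for pure second derivatives in every unit direction.

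Second, I will control the full Hessian by its diagonal. Since $\tilde{u}$ is convex, $D^2\tilde{u}\geq 0$, so
\begin{equation*}
|D^2\tilde{u}|\leq \sum_k \tilde{u}_{kk}\leq n\max_k \tilde{u}_{kk},
\end{equation*}
and we may take the supremum over $\tilde{\Omega}$ on both sides.

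Third, I will absorb the last term. The smallness condition \eqref{Dxpf} is meant to make the coefficient of $\sup_{\tilde{\Omega}}|D^2\tilde{u}|$, after multiplication by $n$ from the previous step, at most $\tfrac12$. Two points need checking here. One is the factor $n$: if \eqref{Dxpf} as stated does not already cover it, I will shrink the smallness constant. The other is that \eqref{Dxpf} is written with $\Lambda_6$ while the estimate involves $\Lambda_8$; these are comparable via \eqref{e2.14}, \eqref{e2.2.19} and the two Corollaries, but that comparison must be confirmed, and I expect this bookkeeping to be the only real obstacle. Once the coefficient is at most $\tfrac12$, moving the term to the left gives
\begin{equation*}
\sup_{\tilde{\Omega}}|D^2\tilde{u}|\leq 2n\max_{\partial\tilde{\Omega}}|D^2\tilde{u}|+C.
\end{equation*}
This is also the intended reading of \eqref{intobod}, whose factor $2$ comes from exactly this absorption.

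Finally, I will insert the boundary bound $\max_{\partial\tilde{\Omega}}|D^2\tilde{u}|\leq C_{34}$ from Lemma \ref{lem4.6}. This gives
\begin{equation*}
\max_{\overline{\tilde{\Omega}}}|D^2\tilde{u}|\leq C_{35}:=2nC_{34}+C,
\end{equation*}
which depends only on $n,\Omega,\tilde{\Omega}$ and $f$. Since $D^2\tilde{u}$ is continuous, the bound extends to the closure $\overline{\tilde{\Omega}}$.
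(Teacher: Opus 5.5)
Your plan is the paper's: Lemma~\ref{lem4.7} is obtained there in one line by inserting the boundary bound of Lemma~\ref{lem4.6} into the reduction \eqref{intobod} of Lemma~\ref{lem4.2}. You also read the factor $2$ in \eqref{intobod} correctly, as coming from absorbing the $|D_{xp}f|$-term.

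\textbf{The factor $n$.} One of your steps should be changed. Bounding $|D^2\tilde u|$ by the trace costs a factor $n$, so the coefficient to be absorbed becomes $\frac{n\Lambda_6}{2\Lambda_8}$. Condition \eqref{Dxpf} does not make this smaller than $1$ once $n\ge 2$. Shrinking the smallness constant would then prove the lemma only under a stronger hypothesis than the one stated.

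The detour is unnecessary; run the absorption for $M:=\sup_{\tilde\Omega}\max_{|\xi|=1}\tilde u_{\xi\xi}$.
\begin{itemize}
\item The maximum-principle bound in the proof of Lemma~\ref{lem4.2} holds for $\tilde u_{\xi\xi}$ for every fixed unit vector $\xi$. To see this, rotate so that $\xi=e_1$: the operator is rotation equivariant, $[s_{ij}(Ry)]=R\,[s_{ij}(y)]\,R^{T}$, and all hypotheses and constants are rotation invariant. The pointwise diagonalization at the start of that proof is not what gives this.
\item The term $2f_{ip_\xi}\tilde u_{i\xi}$ is bounded by $2|D_{xp}f|\,M$.
\item For convex $\tilde u$, $M$ is exactly the supremum of the operator norm of $D^2\tilde u$; entrywise, $|\tilde u_{ij}|\le\sqrt{\tilde u_{ii}\tilde u_{jj}}\le M$.
\end{itemize}
This gives $M\le\max_{\partial\tilde\Omega}|D^2\tilde u|+\frac{C_{20}}{2}+\frac{\Lambda_6}{2\Lambda_8}M$, with no factor $n$.

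\textbf{$\Lambda_6$ versus $\Lambda_8$.} Comparability up to a constant factor is not enough here. The absorption needs $\frac{\Lambda_6}{2\Lambda_8}\le\frac12$, i.e.\ $\Lambda_6\le\Lambda_8$. The paper already uses exactly this tacitly in Lemmas~\ref{tildevnestimate} and \ref{lem4.2}. There, $|f_ih_{p_i}|\le\frac{\theta\Lambda_6}{2}$ must be dominated by $\frac{\theta}{2}\mathcal{T}_{\tilde G}$, while $\mathcal{T}_{\tilde G}$ is only known to be at least $\Lambda_8$.

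Both constants are merely lower bounds whose existence is asserted, so you may replace each by $\min\{\Lambda_6,\Lambda_8\}$. With that normalization and the fix above you get
\[
M\le 2\max_{\partial\tilde\Omega}|D^2\tilde u|+C_{20}\le 2C_{34}+C_{20},
\]
which is the lemma as stated. Alternatively, simply cite \eqref{intobod} as a black box, as the paper does.
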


By Lemma \ref{lem4.1} and Lemma \ref{lem4.7}, we conclude that
\begin{lemma}\label{lem3.6}
Assume that $\Omega$, $\tilde{\Omega}$ are bounded, uniformly convex domains with smooth boundary in $\mathbb{R}^{n}$ and $\tilde{\Omega}\subset\subset B_{1}(0)$. If $u$ is a strictly convex solution to \eqref{e1.1.3}-\eqref{e1.1.4} and $f \in \mathcal{A}$ satisfy \eqref{oscf}, \eqref{Dxf}, \eqref{Dxpf}, then there exists a positive constant $C_{36}$ depending only on $n$, $\Omega$, $\tilde{\Omega}$ and $f$, such that
\begin{equation*}\label{eq4.4.8}
\frac{1}{C_{36}}I_n\leq D^2 u(x) \leq C_{36} I_n,\; x\in\bar\Omega,
\end{equation*}
where $I_n$ is the $n\times n$ identity matrix.
\end{lemma}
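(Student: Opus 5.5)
The upper bound is already contained in Lemma \ref{lem4.1}. There, $D^2u = v\,[b_{ik}]\,[a_{kl}]\,[b_{lj}]$, with $0<v\le 1$ and the eigenvalues of $[b_{ij}]$ controlled because $Du(\Omega)=\tilde\Omega\subset\subset B_1(0)$. Moreover $[a_{ij}]$ is nonnegative with trace $F\le\Lambda_2$ by Lemma \ref{L2.1}. Hence $D^2u\le C_{17}I_n$ on $\bar\Omega$, and I take $C_{36}\ge C_{17}$.

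For the lower bound I pass to the Legendre transform. Since $u$ is strictly convex and $Du:\Omega\to\tilde\Omega$ is a diffeomorphism, $\tilde u(y)=x\cdot Du(x)-u(x)$ solves \eqref{e2.17}, and it is strictly convex with $D\tilde u(\tilde\Omega)=\Omega$. At corresponding points $y=Du(x)$ we have
\[
D^2\tilde u(y)=[D^2u(x)]^{-1}.
\]
The hypotheses \eqref{oscf}, \eqref{Dxf}, \eqref{Dxpf} together with $f\in\mathcal A$ are exactly those of Lemma \ref{lem4.7}, so $|D^2\tilde u|\le C_{35}$ on $\bar{\tilde\Omega}$. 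Because $D^2\tilde u$ is positive definite, every eigenvalue of $D^2\tilde u(y)$ is at most $C_{35}$. Therefore every eigenvalue of $D^2u(x)$ is at least $1/C_{35}$, that is, $D^2u\ge C_{35}^{-1}I_n$ on $\bar\Omega$, where at boundary points I use continuity up to the boundary.

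Taking $C_{36}=\max\{C_{17},C_{35}\}$ gives the claim. The only point needing care is that Lemma \ref{lem4.7} is applied to the transformed problem with the same data. This requires that $\tilde u$ inherit enough regularity: $u\in C^{2}(\bar\Omega)$ uniformly convex gives $\tilde u\in C^2(\bar{\tilde\Omega})$ by the inverse function theorem. It also requires that the constants of Corollary \ref{c3.4} and the obliqueness estimate of Proposition \ref{llll3.4} be available for $\tilde u$. Both hold because $\langle\tilde\beta,\tilde\nu\rangle=\langle\beta,\nu\rangle$. I expect no real obstacle, since the substantive work is contained in Lemmas \ref{lem4.1}--\ref{lem4.7}.
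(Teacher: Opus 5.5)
Your proposal is correct and follows the paper's own argument: the upper bound comes from Lemma \ref{lem4.1}, and the lower bound comes from the global $C^2$ bound on the Legendre transform in Lemma \ref{lem4.7} via $D^2\tilde u(Du(x))=[D^2u(x)]^{-1}$. If $|D^2\tilde u|$ is read entrywise rather than as the operator norm, replace $C_{35}$ by $nC_{35}$ in the eigenvalue bound; nothing else changes.
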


\vspace{3mm}
\section{Existence and uniqueness of solutions}\label{sec5}
In this section, we prove the existence of a solution to (\ref{e1.1.3}) and (\ref{e1.1.4}) by the continuity method
and some tricks which we learn from Wang-Huang-Bao's work \cite{Wang2023}.
The proof of uniqueness up to a constant, we refer the readers to Lemma 5.1 in Huang-Li \cite{RS}.

\noindent\textbf{ Proof of Theorem \ref{t1.1}.}

 For $t\in [0,1]$, consider the following problem
\begin{align}\label{eq6.1}
\begin{cases}
      G(Du,D^2u)=tf(x,Du)+c(t) ,\quad &x\in\Omega,\\
  \qquad\;  h(Du)=0,\quad  &x\in \partial\Omega.
\end{cases}
\end{align}
By the main result in \cite{HW}, \eqref{eq6.1} is solvable if $t=0$. We denote the Banach space
$$\mathcal{X}=\{u\in C^{2,\alpha}(\overline{\Omega}):\int_\Omega u=0\}$$
and 
$$\mathcal{Y}:=C^\alpha(\bar{\Omega})\times C^{1,\alpha}(\partial\Omega).$$
Define a map from $\mathcal{F}: \mathcal{X}\times \mathbb{R} \to \mathcal{Y}$ as
$$\mathcal{F}(u,c)=\left(G(Du,D^2u) - tf(x,Du) - c(t), h(Du)\right).$$
Therefore, if $(u,c)\in\mathcal{X}\times\mathbb{R}$ is a solution of \eqref{eq6.1}, and then $\mathcal{F}(u,c)=(0,0)$. The linearized operator $D\mathcal{F}:\mathcal{X}\times\mathbb{R}\to\mathcal{Y}$ by
$$D\mathcal{F}_{(u,c)}^t(w,a)=(\mathcal{L}_tw-a,\mathcal{N}w).$$
where the operator $\mathcal{L}_t:C^{2,\alpha}(\overline{\Omega})\rightarrow C^{\alpha}(\overline{\Omega})$ is define by
$$\mathcal{L}_tw(x)=G_{ij}(Du,D^2u)\partial_{ij}w+(G_{p_i}-tf_{p_i})(Du,D^2u)\partial_iw,$$
for $x\in \Omega$. Moreover, the operator $\mathcal{N}:C^{2,\alpha}(\overline{\Omega})\rightarrow C^{1,\alpha}(\partial\Omega)$ is defined by
$$\mathcal{N}w(x) =\langle D w(x),D\tilde{h}(D u(x))\rangle$$
for $x\in\partial\Omega$. Obviously, $\mathcal{L}$ is an elliptic operator, the boundary condition is oblique.
Repeating the proof of Proposition 3.1 in \cite{Brendle2008ABV} and Proposition 5.1 in \cite{HW}, we know that $D\mathcal{F}_{(u,c)}^t$ is invertible if $c(t)$ is bounded for any $t \in [0, 1]$ and $(u, c)$ being the solution to \eqref{eq6.1}.

Define the set
\begin{align*}
    I:=\{t\in[0,1]: \eqref{eq6.1} \text{has at least one convex solution}\}.
\end{align*}
By Huang-Wei-Ye's theorem \cite{HW}, $0\in I$ and $I$ is a non-empty.
We claim that $I=[0,1]$, which is equivalent to prove that the set $I$ is not only open, but also closed.
It follows from Proposition 3.1 in \cite{Brendle2008ABV} and Theorem 17.6 in \cite{GT} that $I$ is open.
We next use the a-priori estimates in Section \ref{sec3} and section \ref{sec4} to prove that $I$
is a closed subset of $[0, 1]$.
It is equivalent to the fact that for any monotone increasing  sequence $\{t_k \}\subset I$, if $\lim_{k\rightarrow \infty}t_k = t_0$, then $t_0 \in I$.

For each $t_k$, we denote $(u_k,c(t_k))$ solving problem \ref{eq6.1}
\begin{equation*}
\begin{cases}
      G(Du_k,D^2u_k)=t_kf(x,Du_k)+c(t_k) ,\quad &x\in\Omega,\\
\qquad\;\;h(Du_k)=0,\quad &x\in \partial\Omega.
\end{cases}
\end{equation*}
Combining Lemma \ref{lem3.6} with Evans-Krylov theory, we can prove that
\begin{equation*}
  \| u_k \|_{C^{2,\alpha}(\bar{\Omega})} \leq C_{42},
\end{equation*}
where $C_{42}$ is independent of $k$.
Since by using Lemma \ref{lem3.6} again, we have $$|c(t_k)|=|G(Du_{t_k},D^2u_{k})-t_kf(x,Du_k)| \leq \Lambda_2+ \max_{\overline{\Omega}\times \overline{\tilde{\Omega}}}|f(x,p)|,$$
and hence by using Arzela-Ascoli Theorem, we know that there exists $\hat{u} \in C^{2,\alpha}(\overline{\Omega})$,
$\hat{c} \in \R$ and a subsequence of $\{t_k\}$,
which is still denoted by $\{t_k\}$, such that by letting $k \rightarrow \infty$ to obtain
\begin{equation*}
\begin{cases}
     \| u_k-\hat u\|_{C^2(\overline{\Omega})}\rightarrow 0,\\
   c(t_k)\rightarrow \hat{c}.
\end{cases}
\end{equation*}
Letting $k\rightarrow \infty$, we deduce that
\begin{equation*}
\begin{cases}
      G(D\hat{u},D^2\hat{u})=t_0f(x,D\hat{u})+\hat{c} ,\quad &x\in\Omega,\\
   \qquad h(D\hat{u})=0,\quad &x\in \partial\Omega.
\end{cases}
\end{equation*}
Therefore $t_0 \in I$ and thus $I$ is closed.
Consequently, $I=[0,1]$ and by Schauder estimate, $u\in C^{4,\alpha}(\overline{\Omega})$, 
we complete the proof of Theorem \ref{t1.1}.
\vspace{5mm}

\noindent{\bf Acknowledgments:} We are very grateful to the editors for their very professional handling of our paper.

\vspace{5mm}
\noindent{\bf Funding:} Jiguang Bao is supported by the National Key Research and Development Program of China (No. 2020YFA0712904) and the National Natural Science Foundation of China (No. 12371200).  Rongli Huang is supported by the National Natural Science Foundation of China (No. 12101145).


\vspace{5mm}



 \begin{thebibliography}{DU}
 	
 	\bibitem{Bart1984}  Bartnik, R. (1984). Existence of maximal surfaces in asymptotically flat spacetimes.
 	Comm. Math. Phys.  94: 155--175. doi: projecteuclid.org/euclid.cmp/1103941280
 	
 	\bibitem{Cho1979}  Bruhat, Y.~C., Fischer, A.~E.,  Marsden, J.E. (1979). Maximal hypersurfaces and
 		positivity of mass. Isolated gravitating systems in general relativity.  396--456.
 		
 		\bibitem{Mard1980}   Marsden, J.~E.,  Tipler, F.~J. (1980).  Maximal hypersurfaces and foliations of constant mean curvature in general relativity. phys. Rep.  66: no.~3, 109--139. doi: 10.1016/0370-1573(80)90154-4.
 			
 		\bibitem{Bayard}  Bayard, P. (2006).  Entire spacelike hypersurfaces of prescribed scalar curvature in Minkowski space. Calc. Var. Partial Differential Equations.  26: no.~2, 245--264.	doi: 10.1007/s00526-005-0367-0.
 			
 		\bibitem{LLJ}  Caffarelli, L.,  Nirenberg, L.,  Spruck, J. (1986).  Nonlinear second order elliptic equations. IV. Starshaped compact Weingarten hypersurfaces.   Current topics in partial differential equations, Kinokuniya, Tokyo.  1--26.
 			
 			
 		\bibitem{Choi}  Choi, H.~I.,  Treibergs, A. (1990). Gauss maps of spacelike constant mean curvature hypersurfaces of Minkowski space. J. Differential Geom. 32(3): 775--817. doi: 10.1515/CRELLE.2009.009.
 				
 		\bibitem{TAE}   Treibergs, A.~E. (1982). Entire spacelike hypersurfaces of constant mean curvature in Minkowski space. Invent. Math. 66(1): 39--56. doi: 10.1090/proc/12969.
 				
 				
 		\bibitem{yau-sch 1979}  Schoen, R.~M., Yau, S.-T. (1979). On the proof of the positive mass conjecture in general relativity.  Comm. Math. Phys.  65: no.~1, 45--76. doi: projecteuclid.org/euclid.cmp/1103904790.
 				
 		\bibitem{Bart1982}  Bartnik, R.~A.,  Simon, L.~M. (1982). Spacelike hypersurfaces with prescribed boundary values and mean curvature. Comm. Math. Phys.  87: no.~1, 131--152. doi: projecteuclid.org/euclid.cmp/1103921909
 			
 				
 		\bibitem{Bay2003}    Bayard, P. (2003). Dirichlet problem for space-like hypersurfaces
 		with prescribed scalar curvature in $R^{n,1}$. Calc. Var. Partial Differential Equations.  18: no.~1, 1--30. doi: 10.1007/s00526-002-0178-5.
 			
 		\bibitem{Bere2013}  Bereanu, C., Jebelean,  P., Torres, P.~J. (2013). Multiple positive radial solutions for a Dirichlet problem involving the mean curvature operator in Minkowski space. J. Funct. Anal. 265: no.~4, 644--659. doi: 10.1016/j.jfa.2013.04.006.
 				
 				\bibitem{Mon 1999}	 Montiel, S. (1999). Uniqueness of spacelike hypersurfaces of constant mean curvature in foliated spacetimes. Math. Ann. 314: no.~3, 529--553. doi: 10.1007/s002080050306.
 			
 			\bibitem{Aquino1}  Aquino C.~P.,  de~Lima H.~F. (2014).  On the umbilicity of complete constant mean curvature spacelike hypersurfaces. Math. Ann.  360: no.~3-4, 555--569. doi: 10.1007/s00208-014-1049-z
 			
 			\bibitem{Bon2023}  Bonsante, F., Seppi, A.,  Smillie, P. (2023). Complete CMC hypersurfaces in Minkowski $(n+1)$-space. Comm. Anal. Geom. 31: no.~4, 799--845. doi: 10.4310/cag.2023.v31.n4.a2.
 			
 			\bibitem{Cheng1976}  Cheng, S.~Y., Yau, S.-T. (1976). Maximal space-like hypersurfaces in the Lorentz-Minkowski spaces. Ann. of Math. (2)  104: no.~3, 407--419. doi: 10.2307/1970963.
 			
 			\bibitem{Xin0}  Xin, Y.~L. (1991).  On the Gauss image of a spacelike hypersurface with constant mean curvature in Minkowski space. Comment. Math. Helv. 66: no.~4, 590--598. doi: 10.1007/BF02566667.
 			
 			\bibitem{Raf2007}  L\'opez~Camino, R. (2007).  On the existence of spacelike constant mean curvature surfaces spanning two circular contours in Minkowski space. J. Geom. Phys.  57: no.~11, 2178--2186. doi: 10.1016/j.geomphys.2007.06.006
 			
 			\bibitem{Fern2007}  Fern\'andez~Delgado, I.,  L\'opez~Fern\'andez, F.~J. (2007).  Periodic maximal surfaces in the Lorentz-Minkowski space $\Bbb L^3$. Math. Z. 256: no.~3, 573--601. doi: 10.1007/s00209-006-0087-y.
 			
 			\bibitem{Akm2024}  Akamine, S.,  Fujino, H. (2024).  Duality of boundary value problems for minimal and maximal surfaces. Comm. Anal. Geom.  32: no.~4, 1057--1094. doi: 10.4310/cag.241015230035.
 			
 			
 			\bibitem{Maw2016} Mawhin, J.~L.,  Torres, P.~J. (2016). Prescribed mean curvature graphs with Neumann boundary conditions in some FLRW spacetimes. J. Differential Equations. 261: no.~12, 7145--7156. doi: 10.1016/j.jde.2016.09.013.
 	
 	        \bibitem{J2}  Urbas, J.~I.~E. (2002).  Weingarten hypersurfaces with prescribed gradient image.  Math. Z. 240: no.~1, 53--82. doi: 10.1007/s002090100362.
 	 
 	        \bibitem{Ju3}  Urbas, J.~I.~E. (2007).  A remark on minimal Lagrangian diffeomorphisms and the Monge-Amp\`{e}re equation. Bull. Austral. Math. Soc. 76: no.~2, 215--218. doi: 10.1017/S0004972700039605.
 	

            \bibitem{urbas1}  Urbas, J.~I.~E. (1995).  Nonlinear oblique boundary value problems for Hessian equations in two dimensions Ann. Inst. H. Poincar\'e{} C Anal. Non Lin\'eaire. 12: no.~5, 507--575. doi: 10.1016/S0294-1449(16)30150-0.
            
            \bibitem{Ju2}  Urbas, J.~I.~E. (1997).  On the second boundary value problem for equations of Monge-Amp\`{e}re type. J. Reine Angew. Math. 487: 115--124. doi: 10.1515/crll.1997.487.115.

            \bibitem{Ju1}   Urbas, J.~I.~E. (2001). The second boundary value problem for a class of Hessian equations.  Comm. Partial Differential Equations. 26: no.~5-6, 859--882. doi: 10.1081/PDE-100002381.
            
            \bibitem{Nessi}  von~Nessi, G.~T. (2010). On the second boundary value problem for a class of modified-Hessian equations.
            Comm. Partial Differential Equations.  35: no.~5, 745--785. doi: 10.1080/03605301003632317.
            
            \bibitem{Jiang1}  Jiang, F.,  Trudinger, N.~S. (2018). On the second boundary value problem for Monge-Amp\`ere type equations and geometric optics. Arch. Ration. Mech. Anal.  229: no.~2, 547--567.
            doi: 10.1007/s00205-018-1222-8.
            
            \bibitem{Chen1}  Chen, S.,  Liu, J.,  Wang, X.~J. (2021). Global regularity for the Monge-Amp\`{e}re equation with natural boundary condition.
            Ann. of Math. (2) 194: no.~3, 745--793. doi: 10.4007/annals.2021.194.3.4.
            
            \bibitem{Savin}  Savin, O.~V.,  Yu, H. (2020).  Global $W^{2,1+\epsilon}$ estimates for Monge-Amp\`{e}re equation with natural boundary condition. J. Math. Pures Appl. (9)  137: 275--289.
            doi: 10.1016/j.matpur.2019.09.006.
            
            \bibitem{Brendle2008ABV}
            Brendle, S., Warren.  M.~W. (2010).
            A boundary value problem for minimal lagrangian graphs. 
            J. Differential Geom.  84: no.~2, 267--287. doi: projecteuclid.org/euclid.jdg/1274707314.
            
            \bibitem{Huang2014OnTS}
             Huang, R.~L. (2015).
             On the second boundary value problem for {L}agrangian mean curvature flow.
            J. Funct. Anal. 269: no.~4, 1095--1114. doi: 10.1016/j.jfa.2015.05.003.
            
            \bibitem{Wang2023}  Wang, C.,  Huang, R.~L.,  Bao, J.~G. (2023).  On the second boundary value problem for Lagrangian mean curvature equation.
            Calc. Var. Partial Differential Equations.  62: no.~3, Paper No. 74, 30 pp. doi: 10.1007/s00526-022-02412-3.
            
            \bibitem{Wang2024}
            C. Wang, R.~L. Huang and J.~G. Bao,
            {\it  On the second boundary value problem for a class of fully nonlinear
            	flow {III}}.
            J. Evol. Equ. {\bf 24} (2024), no.~3, Paper No. 52, 38 pp. doi: 10.1007/s00028-024-00983-6.
            
            \bibitem{Jiangbao2025}  Jiang, Q.~F.,  Bao, J.~ G.  On the second boundary value problem for Lagrangian	mean curvature type equation. preprint. 
            
            \bibitem{HR1}  Huang, R.~L. (2024). On the second boundary value problem for mean curvature flow in Minkowski space.   arXiv preprint: arXiv:2404.05972.
            
   
            \bibitem{RS} Huang, R.~L., Li, S. (2022).  On the second boundary value problem for special Lagrangian curvature potential equation.
            Math. Z.  302: No.~1, 391--417. doi: 10.1007/s00209-022-03060-1.
            
            \bibitem{ou} Huang, R.~L.,  Ou, Q.~Z. (2017). On the second boundary value problem for a class of fully nonlinear equations.
            J. Geom. Anal.  27: no.~4, 2601--2617. doi: 10.1007/s12220-017-9774-7.
            
            \bibitem{HOWY} Huang, R.~L.,  Qu, C.~Z.,  Wang, Z.~Z.,  Wo, W.~F. (2024).  Hessian curvature hypersurfaces with prescribed Gauss image  arXiv preprint: arXiv:2412.09159.
            
            \bibitem{HW} Huang, R.~L.,  Wei, D.~Y.,  Ye,  Y.~H. (2024)  The constant mean curvature hypersurfaces with prescribed gradient image. arXiv preprint: arXiv:2411.00817.
            
             \bibitem{ma}  Ma, X.~N.  (1999). A necessary condition of solvability for the capillarity boundary of Monge-Amp\`ere equations in two dimensions. Proc. Amer. Math. Soc.  127: no.~3, 763--769. doi: 10.1090/S0002-9939-99-04750-4.
             
            \bibitem{urbas2}  Lions, P.-L.,  Trudinger, N.~S.,  Urbas, J.~I.~E. (1986).  The Neumann problem for equations of Monge-Amp\`ere type. Comm. Pure Appl. Math. {\bf 39} , no.~4, 539--563. doi:10.1002/cpa.3160390405.
             
            
            
            \bibitem{Li}  Li, A.~M. (1995). Spacelike hypersurfaces with constant Gauss-Kronecker curvature in the Minkowski space. Arch. Math. (Basel)  64: no.~6, 534--551. doi: 10.1007/BF01195136.
            
            \bibitem{EH}  Ecker K.,  Huisken, G. (1991). Parabolic methods for the construction of spacelike slices of prescribed mean curvature in cosmological spacetimes.
            Comm. Math. Phys.  135: No.~3, 595--613. doi: projecteuclid.org/euclid.cmp/1104202145.
            
            \bibitem{Ec}  Ecker, K. (1997). Interior estimates and longtime solutions for mean curvature flow of noncompact	spacelike hypersurfaces in Minkowski space. J. Differential Geom.  46: no.~3, 481--498. doi: projecteuclid.org/euclid.jdg/1214459975.
            	
           \bibitem{JS}   Spruck, J. (2003)   Geometric Aspects of the theory of fully nonlinear elliptic equations.  Clay Math. Proc. 2, Amer. Math. Soc., Providence, RI. 
            	
            \bibitem{OC}  Schn\"urer, O.~C. (2002). Translating solutions to the second boundary value problem
            		for curvature flows. Manuscripta Math.  108: no.~3, 319--347. doi: 10.1007/s002290200265.
            		
            \bibitem{HRY}  Huang, R.~L.,  Ye, Y.~H. (2019).  On the second boundary value problem for a class of fully nonlinear flows I.  Int. Math. Res. Not. 18: 5539--5576. doi: 10.1093/imrn/rnx278.
            
            \bibitem{OK} Schn\"urer, O.~C.,  Smoczyk, K. (2003).  Neumann and second boundary value problems for Hessian and  Gauss curvature flows.
            Ann. Inst. H. Poincar\'e{} C Anal. Non Lin\'eaire  20: no.~6, 1043--1073. doi: 10.1016/S0294-1449(03)00021-0.
            
            \bibitem{GT}  Gilbarg, D.,  Trudinger, N.~S. (1983).  Elliptic partial differential equations of second order, second edition. 
            Grundlehren der mathematischen Wissenschaften, 224, Springer, Berlin, doi: 10.1007/978-3-642-61798-0.
            
            
            
           

   
	
	
	

\end{thebibliography}
\end{document}